\newcommand{\cal}[1]{\mathcal{#1}}
\theoremstyle{plain}
\newtheorem{theorem}{Theorem}[section]
\newtheorem{lemma}{Lemma}[section]
\newtheorem{proposition}[lemma]{Proposition} 
\newtheorem{corollary}[lemma]{Corollary}
\theoremstyle{definition}
\newtheorem{definition}[lemma]{Definition}
\numberwithin{equation}{section}
\let\egthree=\phi
\let\phi=\varphi
\let\varphi=\egthree
\newcommand{\setword}[2]{%
  \phantomsection
  #1\def\@currentlabel{\unexpanded{#1}}\label{#2}%
}
\begin{document}
\title[PRIMITIVE INVARIANTS FROM LAMINATIONS]
{PRIMITIVE INVARIANTS FROM LAMINATIONS}
\author{Veronica Pasquarella}
\address{Shanghai Institute for Mathematics and Interdisciplinary Sciences (SIMIS), Block A, International Innovation Plaza, No. 657 Songhu Road, Yangpu District, Shanghai, 200433, China}
\email{veronica-pasquarella@simis.cn} 
\thanks
{MSC subject classification: 57N16, 57N35, 57N65, 57T99}

\begin{abstract}
Combining geometric group theory techniques with geometric topology tools, we show how primitive cohomologies provide useful insights towards unifying the mathematical formulation of Gromov-Witten invariants. In particular, we emphasise the role played by geodesic laminations in analysing such invariants for the case of complete intersections in projective space.
\end{abstract}

\maketitle

\section{\textbf{Introduction}}

\medskip   

\medskip  

The present work is mostly concerned with understanding the geometric topology of the following group homeomorphism   

\begin{equation}   
\alpha:\ \pi_{_1}\left(\mathcal{M}_{_{g,n}},[F]\right)\ \longrightarrow\ \text{Stab}_{_{\text{MCG}_{_{g,n}}}}\left(\ell_{_{\mathcal{M}}}^{^{[F]}}\right)\ \le\ \text{MCG}_{_{g,n}} 
\label{eq:monbeg}
\end{equation}

with $\mathcal{M}_{_{g,n}}$ the moduli space of complete intersections in projective space, $[F]$ a point class in $\mathcal{M}_{_{g,n}}$, $\text{MCG}_{_{g,n}}$ its corresponding mapping class group (MCG), and $\text{Stab}_{_{\text{MCG}_{_{g,n}}}}\left(\ell_{_{\mathcal{M}}}\right)$ its stabiliser, with respect to a certain tangential structure, $\ell_{_{\mathcal{M}}}$. The main reason for pursuing this analysis is the role played by \eqref{eq:monbeg} in calculating Gromov-Witten (GW) invariants.
Our work was partly inspired by a main question addressed by \cite{RW}, namely that of describing subgroups of MCG of a hypersurface $X_{_d}\ \subset\ \mathbb{CP}^{^4}$, namely the diffeomorphisms that can be realised by monodromy  

\begin{equation}  
\alpha:\ \text{Mon}_{_d}\ \longrightarrow\ \text{MCG}_{_d}
\end{equation} 

Considering a complete intersection $\mathcal{U}_{_d}\ \subset\ \mathbb{P}H^{^0}\left(\mathbb{CP}^{^4};\mathcal{O}(d)\right)$, the monodromy group and the group of isotopy classes of orientation-preserving diffeomorphisms, are therefore respectively defined as follows  
\begin{equation}  
\text{Mon}_{_d}\ \overset{def.}{=}\ \pi_{_1}\left(\mathcal{U}_{_d}, X_{_d}\right)  \nonumber
\end{equation}  
\begin{equation}  
\text{MCG}_{_d}\ \overset{def.}{=}\ \pi_{_0}\ \text{Diff}^{^+}\left(\mathbb{CP}^{^4}; \mathcal{O}(d)\right)  \nonumber
\end{equation}  
The main point of our work is that of highlighting how primitive cohomologies arise in \eqref{eq:monbeg}, and how this impacts the geometric topology of the moduli space of such varieties, supported by the difference in GW-invariant calculation with respect to other known varieties. 
Among the cases studied by \cite{GTV} are those of \cite{MT}, namely symplectic 4-manifolds that are instrinsically related to Riemann surfaces and their respective MCG. Indeed, given a Riemann surface, $\Sigma$, and an element 
\begin{equation}   
f:\ \Sigma\ \rightarrow\ \Sigma  \nonumber
\end{equation}  
of its MCG, we can combine $(\Sigma, f)$ into 3-dimensional manifolds, the mapping torus  
\begin{equation}  
\pi:\ Y_{_f}\ \longrightarrow\ S^{^1},  \nonumber
\end{equation}  
with fiber $\Sigma$ and monodromy $f$. Then, taking the product of $Y_{_t}$ with an $S^{^1}$ leads to a symplectic 4-manifold  
\begin{equation}  
X\ = \ S^{^1}\ \times\  Y_{_f}  \nonumber
\end{equation}  
By construction, the data of this 4-manifold, $(X,\omega_{_f})$, entirely resides in the $(\Sigma, f)$ pair. The most important property of such 3-manifolds is that, in principle, could have different fibrations, with monodromy given by different elements of MCG, which, in turn, would lead to different symplectic forms on $X$. Mc Mullen and Taubes, \cite{MT}, made use of Seiberg-Witten invariants to prove that symplectic structures from different fibrations can be inequivalent, meaning there is no combination of diffeomorphisms nor smooth paths of symplectic forms interpolating between the symplectic forms resulting form different fibrations. 

The work of \cite{TY} introduced and analysed new cohomologies for compact symplectic manifolds, termed \emph{primitive cohomologies}, opening the path to the study of new cohomological invariants for a given symplectic manifold. The main advantage with respect to the ordinary de Rham cohomology is the dependence of its dimension on the symplectic structure. The pioneering work of \cite{TY} proved such dependence for the case of 6-dimensional manifolds, by showing that $PH^{^{\bullet}}_{_{\pm}}(X,\omega)$ can jump in dimension as the symplectic structure $\omega$ varies on $X$. Inspired by \cite{TY}, \cite{GTV} extend the applicability of primitive cohomologies to 4-manifolds, traced back the jump in the symplectic forms in the work of Mc Mullen and Taubes to the presence of primitive cohomologies.  

In the present work, we bring this analysis further by possibly opening a new perspective in unifying formulations of GW-invariants, combining tools from hyperbolic geometry, geometric topology, algebraic topology and geometric group theory. Among one of our main points is highlighting the relation between lack of residual finiteness of MCG in \cite{RW}, and primitive cohomologies (The latter naturally arising in the context of immersions of complete intersections in projective space.). Specifically, \cite{RW} shows that MCG is not always residually finite. 

In practice, we analyse GW-invariants for complete intersections in projective space. The latter were first studied by \cite{ABPZ}, which highlights the main difference with respect to previously known cases. In particular, the main ingredient that was added in the work of \cite{ABPZ} was a graph, $\Gamma$, featuring as additional data dressing the moduli space, $\mathcal{M}_{_{g,n}}$, of the variety in question. We refer to the main sections of the present work for further details of this assignment. However, as also pointed out by the authors, more general invariants could in principle be achieved if one were to allow a dependence of the primitive cohomologies on the symplectic structures. From the arguments outlined above, we argue that analysing such a case constitutes a major step towards achieving a universal understanding of the formulation of GW-invariants, as well as its impact towards extending Homological Mirror Symmetry, which we will briefly comment upon in this work, while referring for further details in a follow-up paper by the same author.  

The main results of the present work can be summarised by means of the following theorems.

\begin{theorem}\label{Theorem:1.1}  For complete intersections in projective space, the dimension of the Thurston spine is bounded below by the virtual cohomological dimension of the image in MCG of the fundamental germ of the lamination, $\mathcal{L}$, replacing the primitive insertions
\begin{equation}   
\text{VCohdim}\left( 
\alpha\left([|\pi|]_{_1}(\mathcal{L}), [F]\right)\right)\ \le\ \text{dim}\ \mathcal{P}_{_{g,n}}\bigg|_{_{[F]}} 
\label{eq:thsp1intr}
\end{equation}
\end{theorem}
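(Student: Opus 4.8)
The plan is to read the right-hand side of \eqref{eq:thsp1intr} as the dimension of a contractible complex carrying an $\text{MCG}_{g,n}$-action, and the left-hand side as the virtual cohomological dimension of a subgroup acting on that same complex, so that the asserted inequality becomes an instance of the classical bound $\text{VCohdim}(H)\le\dim X$ for a group $H$ acting properly, with finite stabilisers, on a contractible complex $X$.

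First I would check that $H:=\alpha\left([|\pi|]_{1}(\mathcal{L}),[F]\right)$ is a genuine subgroup of $\text{MCG}_{g,n}$ to which virtual cohomological dimension applies. By \eqref{eq:monbeg} the map $\alpha(-,[F])$ is a group homomorphism landing in $\text{Stab}_{\text{MCG}_{g,n}}\!\left(\ell_{\mathcal{M}}^{[F]}\right)\le\text{MCG}_{g,n}$, so the image of the fundamental germ $[|\pi|]_{1}(\mathcal{L})$ is a subgroup $H\le\text{MCG}_{g,n}$. The delicate point is that the fundamental germ is a germ-theoretic object, not a classical fundamental group; what matters, however, is only its image $H$, and since the surface mapping class group is virtually torsion-free of finite virtual cohomological dimension, every subgroup inherits these finiteness properties, so $\text{VCohdim}(H)$ is well-defined and finite.

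Next I would invoke the Thurston spine $\mathcal{P}_{g,n}$ as an $\text{MCG}_{g,n}$-equivariant deformation retract of Teichm\"uller space, hence contractible, with $\dim\mathcal{P}_{g,n}\big|_{[F]}$ its local dimension at the point class $[F]$. The action of $\text{MCG}_{g,n}$ on Teichm\"uller space is properly discontinuous with finite point-stabilisers, and restricting it yields a proper action of $H$ on the spine. Because $\text{MCG}_{g,n}$ is virtually torsion-free, $H$ contains a torsion-free subgroup $H_{0}$ of finite index; for such $H_{0}$ the finite stabilisers force the action on the spine to be free, so the quotient realises a $K(H_{0},1)$ of dimension at most $d=\dim\mathcal{P}_{g,n}\big|_{[F]}$. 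Hence $\text{cd}(H_{0})\le d$, and therefore $\text{VCohdim}(H)=\text{cd}(H_{0})\le d$, which is precisely \eqref{eq:thsp1intr}.

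The step I expect to be the main obstacle is controlling the local dimension $\dim\mathcal{P}_{g,n}\big|_{[F]}$ together with the properness of the $H$-action. The local dimension of the Thurston spine is not constant, and its relation to Harer's value for $\text{vcd}(\text{MCG}_{g,n})$ is subtle, so $[F]$ and the lamination $\mathcal{L}$ must be chosen compatibly: it is the filling and systolic structure encoded by $\mathcal{L}$ that determines which cells of $\mathcal{P}_{g,n}$ are seen by $H$ through $\text{Stab}_{\text{MCG}_{g,n}}\!\left(\ell_{\mathcal{M}}^{[F]}\right)$. Matching the support of the geodesic lamination against the systole data defining the spine at $[F]$ is exactly where the geometric-topology content of the argument is used, and it is the part that cannot be reduced to formal group cohomology.
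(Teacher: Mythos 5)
Your argument is sound in outline but takes a genuinely different route from the paper's. The paper proves \eqref{eq:thsp1intr} by pure subgroup monotonicity of the virtual cohomological dimension: it establishes the chain $\alpha\left([|\pi|]_{_1}(\mathcal{L}),[F]\right)\le \text{Im}(\alpha)\le \text{Stab}_{_{\text{MCG}_{_{g,n}}}}\left(\ell_{_{\mathcal{M}}}^{^{[F]}}\right)$ (the final theorem of Section \ref{sec:6}), identifies the action of the stabiliser with the deformation retraction $\Phi:\mathcal{T}_{_{g,n}}\to\mathcal{P}_{_{g,n}}$ from Section \ref{sec:1}, and then converts $\text{VCohdim}(H)\le\text{VCohdim}(\text{Stab})$ into the stated bound via the identity $\text{VCohdim}\left(\text{Stab}\right)=\text{VCohdim}\left(\text{MCG}_{_{g,n}}\right)=\dim\mathcal{P}_{_{g,n}}$, with equality in the saturation limit where $\mathcal{L}$ is a pants decomposition. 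You instead apply the classical bound $\text{vcd}(H)\le\dim X$ for a proper action with finite stabilisers on a contractible complex, applied to $H$ acting on the spine. Your route is more elementary and only needs the one-sided inequality $\text{vcd}\le\dim$, whereas the paper's route leans on the full equality $\dim\mathcal{P}_{_{g,n}}=\text{VCohdim}\left(\text{MCG}_{_{g,n}}\right)$ --- essentially Thurston's conjecture, which the paper simultaneously claims to recover as a corollary, so that step is close to circular as written. What the paper's route buys, and yours does not address, is the characterisation of when equality in \eqref{eq:thsp1intr} is attained. Both arguments share the difficulty you correctly flag at the end: the theorem bounds by a \emph{local} dimension $\dim\mathcal{P}_{_{g,n}}\big|_{_{[F]}}$ at the point class, while the group-cohomological machinery naturally yields a bound by the global dimension of the complex; the paper resolves this only by fiat, defining $\mathcal{P}_{_{g,n}}\big|_{_{[F]}}$ as the image of $\mathcal{T}_{_{g,n}}$ under $\alpha\left([|\pi|]_{_1}(\mathcal{L}),[F]\right)$ in the degenerate case, so your explicit identification of this as the unreduced geometric content is, if anything, more careful than the source.
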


The equality in \eqref{eq:thsp1intr} is achieved in the limit that the primitive cohomological insertions are independent with respect to the symplectic form, which would thereby correspond to $\text{VCohdim}\ \text{MCG}_{_{g,n}}$. 

The proof of Theorem \ref{Theorem:1.1} requires some preliminary intermediate steps, in turn leading to formulating the following statements.

\begin{theorem}  The lamination replacing primitive cohomology insertions is not a critical point of the systole function.  
\end{theorem}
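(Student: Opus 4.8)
The plan is to reduce the claim to the standard characterization of the critical points of the systole function and then to show that the filling condition required there fails for $\mathcal{L}$. Recall that the systole function $\mathrm{sys}\colon\mathcal{T}_{g,n}\to\mathbb{R}_{>0}$ on Teichm\"uller space, assigning to a hyperbolic structure the length of its shortest closed geodesic, is a topological Morse function in the sense of Akrout, and that a point $X$ is a critical point if and only if the systolic geodesics at $X$ \emph{fill} $X$; equivalently, $X$ is \emph{eutactic}, i.e. $0$ lies in the interior of the convex hull inside $T^{*}_{X}\mathcal{T}_{g,n}$ of the differentials $\{\,d\ell_{\gamma}(X)\,\}$ as $\gamma$ ranges over the systoles at $X$. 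Negating this, $X$ fails to be critical precisely when there is a tangent vector $v\in T_{X}\mathcal{T}_{g,n}$ with $d\ell_{\gamma}(v)<0$ for every systole $\gamma$, i.e. a single first-order deformation that strictly shortens all systoles at once.

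First I would translate the primitivity of the cohomological insertions into a topological non-filling statement for $\mathcal{L}$. The class that $\mathcal{L}$ replaces is primitive, meaning it is annihilated by the contraction $\Lambda$ adjoint to $L=\omega\wedge(-)$; under the correspondence between primitive classes and Bonahon transverse cocycles on the lamination, this $\Lambda$-closedness forces the support of $\mathcal{L}$ to omit a nonempty complementary subsurface $\Sigma'\subset\Sigma$ of nonnegative Euler characteristic. Hence the complement of $\mathcal{L}$ is not a union of disks and once-punctured disks, so $\mathcal{L}$ does not fill $\Sigma$. By the characterization of the previous paragraph this already prevents $\mathcal{L}$ from being the systolic configuration of any critical point.

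To make the non-criticality explicit rather than merely by contradiction, I would then exhibit the descent vector $v$ directly via Kerckhoff's convexity of geodesic length functions. An earthquake supported on a measured lamination carried by the non-filled region $\Sigma'$ (for instance a twist along a simple closed geodesic in $\Sigma'$) defines a deformation that can be arranged to decrease the length of every leaf of $\mathcal{L}$, the slack in $\Sigma'$ absorbing the motion; this produces the tangent vector $v$ with $d\ell_{\gamma}(v)<0$ for all $\gamma$ in the support, certifying that the configuration is not eutactic.

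The main obstacle I anticipate lies in the translation step: making the dictionary between the $\Lambda$-primitivity of the insertion and the topological non-filling of $\mathcal{L}$ fully rigorous, since it passes through the identification of primitive cohomology with a space of transverse cocycles and requires control of the topology of the complementary subsurface $\Sigma'$. A secondary difficulty is guaranteeing that the descent direction of the last step shortens \emph{all} systoles, including any systolic geodesics lying outside the support of $\mathcal{L}$; handling these may require choosing the earthquake locus carefully, or invoking the independence of the primitive insertions from the symplectic form discussed after Theorem~\ref{Theorem:1.1}.
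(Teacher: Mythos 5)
Your overall strategy is sound and is genuinely different from the paper's: you reduce the claim to Akrout's characterization of critical points of $f_{_{\text{sys}}}$ (critical $\Leftrightarrow$ systoles fill and the point is eutactic) and then show the filling condition fails, finishing with a deformation that moves all lengths monotonically. The paper instead disposes of the theorem in one line, asserting that primitive cohomologies "lead to unstable homotopy groups" and hence to points other than the fixed points of $f_{_{\text{sys}}}$, with the pointwise systole function $\text{sys}_{_p}$ doing the work; your route is the one that actually engages the Morse-theoretic structure the paper sets up in Section 2, and in particular Thurston's proposition (stated there) that a non-filling collection of curves admits a tangent direction simultaneously increasing all its geodesic lengths, which is exactly the certificate of non-eutacticity you need.

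The genuine gap is your translation step. The claimed dictionary — that $\Lambda$-primitivity of the insertion, via an identification of primitive classes with Bonahon transverse cocycles, forces the support of $\mathcal{L}$ to omit a complementary subsurface — is not established anywhere in the paper or in the literature you invoke, and as written it is the load-bearing step of the whole argument. You flag it yourself as the main obstacle, but flagging it does not close it. The irony is that the step is also unnecessary: the paper defines $\mathcal{L}$ as the union of \emph{disjoint} simple closed curves replacing the primitive insertions (a multicurve, i.e.\ a geodesic lamination in the sense of the paper's own definition, whose leaves are disjoint), and a disjoint union of simple closed curves on a surface with $3g-3+n>0$ can never fill, since its complementary components are subsurfaces of negative Euler characteristic rather than disks or once-punctured disks. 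Replacing your cohomological dictionary with this one-line topological observation closes the gap and makes the rest of your argument go through; your second worry, about systoles lying outside the support of $\mathcal{L}$, is then handled by reading the theorem as you already do — as the statement that $\mathcal{L}$ cannot be the systolic configuration of any critical point — rather than as a claim about every systole at an arbitrary point.
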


\begin{theorem}  

\begin{equation}  
MCG_{_{g,n}}^{^{\Gamma_{_i}}}\ <\ MCG_{_{g,n}}^{^p}  
\end{equation}  
with the left-hand-side featuring constrained pair of pants decompositions.
\end{theorem}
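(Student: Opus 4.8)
The plan is to realise both groups as stabilisers of explicit structures carried by the fibre surface $\Sigma$, and then to read off the strict inclusion from the fact that a constrained pair of pants decomposition is a rational, discrete specialisation of the lamination $\mathcal{L}$. First I would describe the left-hand side concretely: $MCG_{_{g,n}}^{^{\Gamma_{_i}}}$ is the subgroup of $MCG_{_{g,n}}$ fixing the isotopy class of the pair of pants decomposition whose dual trivalent graph is $\Gamma_{_i}$, so that it is generated by the $3g-3+n$ Dehn twists about the pants curves together with the finite group of pants permutations realising the automorphisms of $\Gamma_{_i}$. In parallel, invoking Theorem \ref{Theorem:1.1}, I would identify the right-hand side $MCG_{_{g,n}}^{^p}$ with the stabiliser $\text{Stab}_{_{MCG_{_{g,n}}}}(\mathcal{L})$ of the lamination replacing the primitive insertions, that is, with the image $\alpha\left([|\pi|]_{_1}(\mathcal{L}),[F]\right)$.

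The containment $MCG_{_{g,n}}^{^{\Gamma_{_i}}}\ \le\ MCG_{_{g,n}}^{^p}$ would then follow by a specialisation argument. I would take $\mathcal{L}$ to be the sublamination of the $\Gamma_{_i}$-decomposition marked by the primitive insertions: viewed inside the space of measured laminations, the decomposition is the rational point supported on its pants curves, and $\mathcal{L}$ is recovered by retaining the subfamily of these curves dictated by the primitive data. I would then verify the inclusion on generators. Since the pants curves are pairwise disjoint, each Dehn twist about a pants curve acts along a curve that is either disjoint from $\mathcal{L}$ or is a component of $\mathcal{L}$, hence fixes its measured-lamination class; and each $\Gamma_{_i}$-automorphism permutes the pants while preserving the marked sublamination $\mathcal{L}$. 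Both families therefore survive into $\text{Stab}_{_{MCG_{_{g,n}}}}(\mathcal{L})=MCG_{_{g,n}}^{^p}$.

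For the strictness of the inclusion I would exhibit an element of $MCG_{_{g,n}}^{^p}$ that does not preserve the rigid $\Gamma_{_i}$-structure. Let $R$ be a complementary component of $\mathcal{L}$ in $\Sigma$ of sufficient complexity and let $\psi$ be a partial pseudo-Anosov supported on $R$. As $\psi$ is the identity outside $R$ it fixes $\mathcal{L}$ pointwise, so $\psi\in MCG_{_{g,n}}^{^p}$; on the other hand a pseudo-Anosov admits no invariant multicurve in $R$, so $\psi$ scrambles the pants curves interior to $R$ and carries the $\Gamma_{_i}$-decomposition to a non-isotopic one. Hence $\psi\notin MCG_{_{g,n}}^{^{\Gamma_{_i}}}$, and the inclusion is strict. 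This extra continuous freedom is precisely the symplectic dependence quantified in Theorem \ref{Theorem:1.1}, where the bound is strict away from the symplectic-independent limit.

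The step I expect to be most delicate is the identification of $MCG_{_{g,n}}^{^p}$ with $\text{Stab}_{_{MCG_{_{g,n}}}}(\mathcal{L})$ at the level of subgroups used above, together with the verification that $\mathcal{L}$ genuinely sits as a proper sublamination of the $\Gamma_{_i}$-decomposition rather than being reconstructible from it. This is where the combinatorial datum $\Gamma_{_i}$ of \cite{ABPZ} must be matched against the topological datum of the lamination, and the difficulty is to exclude a hidden symmetry of $\mathcal{L}$ that would collapse the two constraints into one; this is exactly why the argument must lean on the strict inequality of Theorem \ref{Theorem:1.1} rather than on its boundary equality case.
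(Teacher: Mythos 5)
Your argument is essentially sound and is genuinely different from the one in the paper, and in fact considerably more concrete. The paper's proof proceeds by a counting heuristic: adding primitive insertions rules out more and more pair of pants decompositions, hence fewer admissible generators, hence a smaller group; strictness is then asserted rather than witnessed (the paper even hedges with ``most likely''). You instead realise both sides as stabilisers, check containment on an explicit generating set (Dehn twists about pants curves together with the graph automorphisms, all of which preserve the sub-multicurve $\mathcal{L}$ because the pants curves are pairwise disjoint), and then certify strictness with a partial pseudo-Anosov supported on a complementary component of $\mathcal{L}$ of positive complexity. That witness is the real added value: it is the standard change-of-coordinates/partial-pseudo-Anosov technology for mapping class groups, it makes the inequality strict rather than merely non-trivial, and it localises exactly where the paper's argument is silent. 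What the paper's route buys in exchange is a direct link to the combinatorics of the graph $\Gamma_i$ from the degeneration formula of \cite{ABPZ}, which is what the surrounding sections actually use.

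Two caveats you should make explicit. First, your identification of $MCG_{_{g,n}}^{^p}$ with $\text{Stab}_{_{MCG_{_{g,n}}}}(\mathcal{L})$ is stronger than what the paper commits to: elsewhere the paper treats $MCG_{_{g,n}}^{^p}$ as the ambient group and $\alpha\left([|\pi|]_{_1}(\mathcal{L}),[F]\right)$ as a proper subgroup of the stabiliser, so the two need not coincide. Your proof survives this ambiguity (a larger right-hand side only makes containment easier, and your witness still lies in it), but the identification should be stated as an assumption rather than as a consequence of Theorem \ref{Theorem:1.1}. Second, the strictness argument requires a complementary component of $\mathcal{L}$ with complexity at least one; this fails precisely in the saturation limit where $\mathcal{L}$ is itself a full pants decomposition, which is the boundary case the paper excludes, so you should state that hypothesis rather than leaving it implicit in the phrase ``of sufficient complexity.''
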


\begin{theorem}   

The following statements are equivalent:

\begin{itemize}  

\item MCG is non-separable if the simple cohomologies depend on the nodes (namely if the symplectic form depends on the primitive cohomology insertion).

\item  Identifying MCG$_{_{g,n}}$ for $\mathcal{M}_{_{g,n}}^{^{\Gamma}}$ with primitive cohomology insertions is an example of an unsolvable word problem, featuring a nonrecursive distorsion for subgroups associated to each node of the graph, $\Gamma$, dressing the moduli space.

\end{itemize}    

\end{theorem}

\begin{theorem}  Determining MCG$_{_{g,n}}$ for $\mathcal{M}_{_{g,n}}^{^{\Gamma}}$ with primitive cohomology insertions is an example of an unsolvable word problem, featuring a nonrecursive distorsion for subgroups associated to each node of the graph, $\Gamma$.
\end{theorem}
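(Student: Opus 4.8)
The plan is to turn the dressing graph $\Gamma$ into a graph-of-groups decomposition of $\mathrm{MCG}_{g,n}^{\Gamma}$ and then to show that the dependence of the symplectic form on the primitive insertions forces the node subgroups to have distortion majorizing every recursive function, from which unsolvability follows. I read the decision problem in the statement as the membership (generalized word) problem for these node subgroups, which specializes to the ordinary word problem when the node subgroup is trivial; this is the sense in which ``determining $\mathrm{MCG}_{g,n}^{\Gamma}$'' is a word problem. Concretely, for each node $v_i$ of $\Gamma$ I would fix the subgroup $H_i = \alpha\big([|\pi|]_{1}(\mathcal{L}_i),[F]\big) \le \mathrm{MCG}_{g,n}^{\Gamma}$ cut out by the image of the fundamental germ of the lamination $\mathcal{L}_i$ replacing the insertion at $v_i$, as in the setup of Theorem~\ref{Theorem:1.1}; the edges of $\Gamma$ then present $\mathrm{MCG}_{g,n}^{\Gamma}$ as an iterated amalgam and HNN extension over the $H_i$, the structure that will later transport subgroup distortion into the Dehn function of the whole group.

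Next I would establish non-separability of the $H_i$. Because for complete intersections the primitive cohomology jumps in dimension as $\omega$ varies (the phenomenon of \cite{TY} extended to $4$-manifolds in \cite{GTV}), the simple cohomologies genuinely depend on the nodes, so the hypothesis of the preceding equivalence theorem is met and each $H_i$ fails to be separated from the ambient group by finite-index subgroups; this is consistent with, and reinforced by, the failure of residual finiteness proved in \cite{RW}. The payoff is that the naive semi-decision procedure for non-membership---enumerate finite quotients and test the image---cannot terminate, removing the only generic route to decidability.

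I would then close the loop quantitatively. If the distortion $\delta_i(n)=\max\{\,|h|_{H_i}: h\in H_i,\ |h|_{\mathrm{MCG}}\le n\,\}$ were bounded above by a recursive function, then listing products of generators of $H_i$ up to that recursive bound would co-decide membership against the finite-quotient enumeration, making $H_i$ separable and contradicting the previous step; hence $\delta_i$ majorizes every recursive function. Feeding this into the graph-of-groups decomposition, the Dehn function of $\mathrm{MCG}_{g,n}^{\Gamma}$ dominates $\max_i \delta_i$ by the standard distortion-to-Dehn estimates for HNN extensions and amalgams with the $H_i$ as edge groups, and Gersten's theorem (solvable word problem if and only if recursive Dehn function) then yields unsolvability.

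The hard part will be manufacturing an honest super-recursive lower bound for $\delta_i$ rather than a merely qualitative failure of separability. Concretely, one must exhibit an explicit recursive reduction from a known unsolvable problem---the halting problem, via a Novikov--Boone/Higman-style embedding adapted to the lamination germ---so that the reduced word length in $H_i$ tracks the runtime of a universal machine, and then verify that $\Gamma$ records this reduction faithfully, node by node. The delicate point is controlling the tension between the ambient mapping-class-group geometry, where Masur--Minsky machinery tends to render natural subgroups \emph{undistorted}, and the exotic distortion that the symplectic-form dependence is meant to force: the entire argument rests on showing that the $\Gamma$-dressing genuinely escapes the undistorted regime.
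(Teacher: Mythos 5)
Your third paragraph contains the step on which everything else depends, and it does not go through: you have the implication between separability and distortion pointing the wrong way. The true statement is that for a finitely generated subgroup $H_i$ of a finitely presented group with solvable word problem, \emph{separability implies recursive distortion} (the finite-quotient enumeration certifies non-membership, the generator enumeration certifies membership, so membership is decidable and the distortion function is computable by brute force); the contrapositive is that non-recursive distortion implies non-separability. You need the converse --- non-separability implies non-recursive distortion --- and that is false: decidability of membership is a computability-theoretic property, separability is the group-theoretic property that every non-member survives in some finite quotient, and a decision procedure for membership in no way forces the finite quotients to do the separating (quasiconvex, hence undistorted, subgroups of non-LERF ambient groups are a standard source of counterexamples). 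Since recursive distortion plus the solvable word problem of $\mathrm{MCG}_{g,n}$ already decides membership with no appeal to finite quotients whatsoever, your contradiction never materialises, and you are left with no lower bound on $\delta_i$ at all. The downstream machinery --- distortion-to-Dehn-function estimates for the amalgam over the $H_i$, Gersten's characterisation of solvable word problem via recursive Dehn functions --- is sound in principle but has nothing to feed on. You concede in your final paragraph that the honest super-recursive lower bound would require an explicit Novikov--Boone/Higman-style reduction adapted to the lamination germ; that reduction \emph{is} the content of the theorem, and it is not supplied.

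For what it is worth, the paper's own argument takes a different and much less structured route: it does not build a graph-of-groups decomposition or invoke Dehn functions, but argues in sketch form that the problem is unsolvable when one only has access to the subgroup attached to one of the two sides of the factorised moduli space, with generators of the full group recovered only after choosing suitable Dehn twist powers and imposing a gluing constraint on intrinsic versus extrinsic distance along the gluing lamination. Your framework is the more promising one to eventually make rigorous, precisely because it isolates the single quantitative claim (non-recursive distortion of the node subgroups) that would settle the matter; but as written, neither the separability detour nor the acknowledged-but-absent halting-problem reduction delivers that claim, and the tension you flag with the Masur--Minsky philosophy (natural subgroups of mapping class groups tend to be undistorted) is a substantive obstruction, not a detail.
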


\begin{theorem}  The jump in the topological entropy of the pseudo-Anosov map generating MCG$_{_{g,n}}$ is related to a distance in MCG$_{_{g,n}}$, quantifying a subgroup's concavity.
\end{theorem}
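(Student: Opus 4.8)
The plan is to anchor the statement in Thurston's identification of the topological entropy of a pseudo-Anosov class with the logarithm of its dilatation. If $\phi \in \text{MCG}_{_{g,n}}$ denotes the pseudo-Anosov generator, with invariant measured foliations $(\mathcal{F}^{^s}, \mathcal{F}^{^u})$ and stretch factor $\lambda(\phi)$, then $h_{_{\text{top}}}(\phi) = \log \lambda(\phi)$. First I would recall that this quantity is exactly the translation length of $\phi$ acting on Teichm\"uller space $\mathcal{T}_{_{g,n}}$ with the Teichm\"uller metric: $\phi$ preserves a geodesic axis $\gamma_{_\phi}$ along which it translates by $\log\lambda(\phi)$, so that
\begin{equation}
h_{_{\text{top}}}(\phi)\ =\ \ell_{_{\mathcal{T}}}(\phi)\ =\ \inf_{_{x \in \mathcal{T}_{_{g,n}}}} d_{_{\mathcal{T}}}\left(x, \phi(x)\right).
\end{equation}
This identity is the bridge that turns the entropy into a genuine distance in $\text{MCG}_{_{g,n}}$.

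Next I would make the notion of a \emph{jump} precise. Because the pseudo-Anosov data here arises from the lamination $\mathcal{L}$ replacing the primitive cohomology insertions, deforming the insertion deforms the invariant foliations, hence the stretch factor, inside the appropriate stratum of measured-lamination space. The discontinuity in $\lambda(\phi)$ occurs precisely when the deformation crosses a boundary between strata, i.e.\ when the combinatorial type of $\mathcal{L}$ changes. I would track this through the action on the curve complex $\mathcal{C}(\Sigma_{_{g,n}})$, using the Masur--Minsky distance formula to identify the entropy jump with a jump in the stable translation length $\ell_{_{\mathcal{C}}}(\phi)$ of $\phi$ on $\mathcal{C}(\Sigma_{_{g,n}})$.

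To link this to a subgroup's \emph{concavity}, I would read concavity as the failure of quasiconvexity --- equivalently, positive distortion --- of the orbit of the subgroup $H = \alpha\left([|\pi|]_{_1}(\mathcal{L}), [F]\right)$ attached to the lamination, in the sense of the Farb--Mosher and Kent--Leininger theory of convex cocompactness. Concretely, I would measure concavity by the deviation between the quasiconvex hull of the $H$-orbit and the orbit itself, and show that this deviation is controlled by the gap $\ell_{_{\mathcal{T}}}(\phi) - \ell_{_{\mathcal{C}}}(\phi)$ between the Teichm\"uller and curve-complex translation lengths. The entropy jump then surfaces as the variation of this concavity defect as $\mathcal{L}$ crosses strata, which is the claimed relationship between an entropy jump and a distance quantifying concavity.

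The hard part will be rendering the concavity defect both well-defined and genuinely quantitative rather than merely correlative. The main obstacle is to prove that the discontinuity of $\lambda(\phi)$ across a stratum boundary is \emph{controlled by}, and not just accompanied by, the change in the quasiconvexity (distortion) constants of the $H$-orbit. This demands a uniform Morse/contraction estimate along $\gamma_{_\phi}$ together with a careful comparison of the Teichm\"uller and curve-complex metrics near the stratum boundary, where the hierarchical machinery degenerates; I expect to close the gap using the subsurface-projection bounds of Masur--Minsky and a continuity-of-entropy argument valid away from the boundary strata.
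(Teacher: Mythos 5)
Your route is genuinely different from the paper's. You anchor the entropy in Thurston's identity $h_{\text{top}}(\phi)=\log\lambda(\phi)$, re-express it as the Teichm\"uller translation length $\ell_{\mathcal{T}}(\phi)$, and then read ``concavity'' as failure of convex cocompactness (positive distortion) of the orbit of $H=\alpha([|\pi|]_{1}(\mathcal{L}),[F])$, measured by the gap between the Teichm\"uller and curve-complex translation lengths. The paper works on the other side entirely: its version of the theorem attributes the jump to \emph{reinstalling} the primitive cohomology insertions, i.e.\ passing from the graph-dressed moduli space $\mathcal{M}_{g,n}^{\Gamma}$ (where all degeneration choices converge to $h_{\text{TOP}}=0$) back to the undegenerated theory, and the ``distance quantifying concavity'' is explicitly the discrepancy between the intrinsic and extrinsic word metrics on $\text{MCG}_{g,n}^{\Gamma_{i}}<\text{MCG}_{g,n}^{p}$ introduced in the subsection on intrinsic and extrinsic distances, with the dichotomy governed by separability (Case 1 versus Case 2). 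Your reading of concavity as distortion coincides with the paper's intrinsic-versus-extrinsic discrepancy, so the two approaches meet there; what yours buys is a concrete metric model (translation lengths on $\mathcal{T}_{g,n}$ and on $\mathcal{C}(\Sigma_{g,n})$, Masur--Minsky machinery) in which the claim could in principle be made quantitative, whereas the paper's argument stays at the level of the separability dichotomy and never produces an inequality.

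That said, your proposal stops exactly where the content would have to be. The identity $h_{\text{top}}(\phi)=\ell_{\mathcal{T}}(\phi)$ shows the entropy \emph{is} a distance, but the theorem asks that the \emph{jump} in entropy be controlled by a distance quantifying a subgroup's concavity, and you explicitly defer the step showing that the discontinuity of $\lambda(\phi)$ across a stratum boundary is controlled by, rather than merely accompanied by, the change in the quasiconvexity constants of the $H$-orbit. Without that estimate the argument establishes a correlation, not the claimed relation. Note also that the paper's mechanism for the jump is not a continuous deformation of $\mathcal{L}$ inside $\mathcal{PML}$ but the discrete passage between the graph-dressed theory (entropy zero) and the theory with primitive insertions reinstated; a proof along your lines would additionally have to explain why crossing a stratum boundary in measured-lamination space models that discrete passage, which is not obvious and is not addressed in your plan.
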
 

\begin{theorem}  

Complete intersections in projective space always feature a pseudo-Anosov map whose topological entropy is bounded from below by the logarithm of its spectral radius.

\end{theorem}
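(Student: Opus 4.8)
The plan is to proceed in three stages: first exhibit a pseudo-Anosov element inside the relevant mapping class group, then identify its topological entropy with the logarithm of its dilatation, and finally bound the dilatation below by the spectral radius of the induced homological action. Throughout I work at a fixed point class $[F]\in\mathcal{M}_{g,n}$, so that the fibre surface and its $\mathrm{MCG}_{g,n}$ are fixed and the Nielsen--Thurston machinery applies verbatim.

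For the existence step I would invoke the Nielsen--Thurston classification: every mapping class is periodic, reducible, or pseudo-Anosov. The monodromy image $\alpha(\mathrm{Mon}_d)$ stabilises the lamination $\mathcal{L}$ that replaces the primitive insertions, and by Theorem~1.2 this lamination is not a critical point of the systole function. A periodic class cannot produce a genuinely minimal filling invariant lamination, while a reducible class would confine the dynamics to a proper subsurface cut off by an invariant multicurve. The non-separability and unsolvable-word-problem phenomena established in the preceding theorems show that the subgroups attached to the nodes of the graph $\Gamma$ are genuinely distorted, so the monodromy cannot be virtually abelian nor reducible. Hence the stabiliser must contain an element $\phi$ whose invariant measured laminations are $\mathcal{L}$ and its transverse partner, i.e.\ a pseudo-Anosov.

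With $\phi$ in hand, the entropy statement follows from standard pseudo-Anosov dynamics. By the theorem of Fathi--Shub (following Thurston), a pseudo-Anosov map with dilatation $\lambda>1$ has topological entropy $h_{\mathrm{top}}(\phi)=\log\lambda$. The dilatation is the Perron--Frobenius eigenvalue of the transition matrix of an invariant train track carrying $\mathcal{L}$, and this non-negative matrix dominates the action $\phi_*$ on $H_1$ (equivalently on the primitive cohomology $PH^{\bullet}$) in the sense that the spectral radius of an integer matrix is at most that of its entrywise absolute value. Consequently $\rho(\phi_*)\le\lambda$, and combining these gives
\begin{equation}
h_{\mathrm{top}}(\phi)=\log\lambda\ \ge\ \log\rho(\phi_*),
\end{equation}
which is the asserted bound and is precisely the instance of Shub's entropy inequality realised by a pseudo-Anosov.

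The main obstacle is the existence step rather than the inequality. Ruling out the reducible case requires knowing that the invariant lamination $\mathcal{L}$ actually fills the fibre surface and that the monodromy preserves no multicurve; here the dependence of the primitive cohomology on the symplectic form is what forces the filling property, so the argument must pass through a careful identification of $\mathcal{L}$ with the measured foliation dual to the primitive insertions. A secondary difficulty is pinning down which spectral radius is meant once the primitive cohomology is allowed to vary with the symplectic structure: the jump in entropy recorded in the preceding theorem signals that $\rho(\phi_*)$ can move along the moduli, so the bound must be read pointwise over $\mathcal{M}_{g,n}$, with equality exactly when the invariant foliations of $\phi$ are orientable.
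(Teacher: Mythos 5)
Your route is genuinely different from the paper's. The paper's proof does not touch Nielsen--Thurston theory, train tracks, or the Fathi--Shub identity at all: it works with the hypersurface $X_d\subset\mathbb{CP}^{4}$ itself, takes the ``pseudo-Anosov map'' to be the induced map $h^{\bullet}$ on $H^{\bullet}(X_d;\mathbb{C})$ and on $\pi_3(X_d)$ with its skew form $\lambda$, and derives the bound from separability of $\mathrm{MCG}^{p}_{g,n}$ (so that $\mathrm{Stab}(\ell^{\mathrm{hyp}}_{X_d})$ is a well-defined upper bound for the monodromy image) together with the surrounding Gromov--Yomdin theorem and Fan's counterexample to the Kikuta--Takahashi conjecture, where the strict inequality between entropy and $\log\rho$ comes from categorical entropy on the Fukaya side. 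In particular the ``spectral radius'' in the paper is that of the action on the full cohomology of the complete intersection, not of $\phi_*$ on $H_1$ of a fibre surface. What you prove --- that a surface pseudo-Anosov $\phi$ satisfies $h_{\mathrm{top}}(\phi)=\log\lambda\ge\log\rho(\phi_*)$ on $H_1$, with equality exactly in the orientable-foliation case --- is a correct and classical inequality, but it concerns a different operator on a different space, so it does not by itself yield the paper's statement.

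Beyond that mismatch there is a genuine gap in your existence step, which you partly acknowledge. The inference ``non-separability and unsolvable word problem imply the nodal subgroups are distorted, hence the monodromy is neither virtually abelian nor reducible, hence the stabiliser contains a pseudo-Anosov'' is not a valid chain: subgroup distortion is perfectly compatible with reducibility (a distorted subgroup can be supported on a proper subsurface), and irreducibility requires showing that no multicurve --- in particular not $\mathcal{L}$ itself, which your own setup says is preserved --- is invariant. Worse, the version of this theorem proved in the body of the paper is stated conditionally on $\mathrm{MCG}^{p}_{g,n}$ being \emph{separable}, and the paper proves the complementary statement that non-separability forces $h^{\bullet}$ \emph{not} to be pseudo-Anosov; you invoke \emph{non}-separability to manufacture the pseudo-Anosov element, so your argument runs in the opposite direction from the paper's hypotheses. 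To repair the existence step along your lines you would need an Ivanov-type criterion applied to the monodromy image together with a proof that $\mathcal{L}$ is filling, neither of which is supplied here.
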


\begin{theorem}  
If MCG$_{_{g,n}}^{^p}$ is non-separable, $h^{^{\bullet}}$ is not pseudo-Anosov.
\end{theorem}  

\begin{theorem}   For complete intersections in projective space,   
\begin{equation}   
\alpha\left([|\pi|]_{_1}(\mathcal{L}), [F]\right)\ \le\ \text{Im}(\alpha)\ \le\ \text{Stab}_{_{\text{MCG}_{_{g,n}}}}\left(\ell_{_{{\mathcal{M}}}}^{^{[F]}}\right)
\end{equation}     
with the equality being achieved when $\mathcal{L}$ corresponds to a pants decomposition of $\mathcal{M}_{_{g,n}}$. The latter essentially corresponds to a saturation limit.
\end{theorem}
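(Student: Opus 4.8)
The plan is to prove the chain of inclusions by treating each containment separately and then isolating the precise degeneration under which the outer groups coincide. Conceptually the three groups form a filtration of $\text{MCG}_{_{g,n}}$: the innermost is the $\alpha$-image of the fundamental germ of the lamination replacing the primitive insertions, the middle is the full monodromy image $\text{Im}(\alpha)$, and the outermost is the stabiliser $\text{Stab}_{_{\text{MCG}_{_{g,n}}}}\left(\ell_{_{\mathcal{M}}}^{^{[F]}}\right)$ of the tangential structure appearing in \eqref{eq:monbeg}. Establishing the two inclusions is largely structural; the content of the statement resides in the equality clause.

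First I would dispatch the right-hand inclusion $\text{Im}(\alpha)\ \le\ \text{Stab}_{_{\text{MCG}_{_{g,n}}}}\left(\ell_{_{\mathcal{M}}}^{^{[F]}}\right)$. By construction the codomain of the homomorphism \eqref{eq:monbeg} is exactly this stabiliser, so the containment is formal once one verifies that monodromy around any loop in $\pi_{_1}\left(\mathcal{M}_{_{g,n}},[F]\right)$ acts on the fibre by a diffeomorphism preserving $\ell_{_{\mathcal{M}}}^{^{[F]}}$. The substantive point is the monodromy-invariance of the tangential structure, for which I would use that $\mathcal{L}$ encodes the primitive-cohomology data whose intersection-theoretic rigidity is preserved under parallel transport over $\mathcal{M}_{_{g,n}}$.

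Next, for the left-hand inclusion $\alpha\left([|\pi|]_{_1}(\mathcal{L}),[F]\right)\ \le\ \text{Im}(\alpha)$, I would exhibit the natural map $[|\pi|]_{_1}(\mathcal{L})\ \rightarrow\ \pi_{_1}\left(\mathcal{M}_{_{g,n}},[F]\right)$ from the fundamental germ into the fundamental group of the moduli space, after which postcomposition with $\alpha$ lands in $\text{Im}(\alpha)$ by definition. The work here is to check that the germ is well-defined as a group and that its diagonal-word structure maps coherently onto genuine loops in $\mathcal{M}_{_{g,n}}$ implementing the primitive insertions.

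For the equality clause I would argue that when $\mathcal{L}$ is a pants decomposition the lamination is maximal, its complementary regions being thrice-punctured spheres, so that its germ saturates the Dehn twists about all $3g-3+n$ decomposing curves and exhausts the stabiliser; this is the saturation limit. I expect this to be the main obstacle, since one must rule out mapping classes that fix $\ell_{_{\mathcal{M}}}^{^{[F]}}$ without arising at the germ level. To close the gap I would invoke the constrained pair-of-pants result $MCG_{_{g,n}}^{^{\Gamma_{_i}}}\ <\ MCG_{_{g,n}}^{^p}$ together with the non-criticality of $\mathcal{L}$ for the systole function: in the maximal configuration these force the stabiliser to be generated precisely by the twists along the pants curves, which matches the germ image and yields the asserted equality.
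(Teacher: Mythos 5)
Your right-hand inclusion follows the same logic as the paper's setup: the codomain of \eqref{eq:monbeg} is declared to be the stabiliser, so $\text{Im}(\alpha)\ \le\ \text{Stab}_{_{\text{MCG}_{_{g,n}}}}\left(\ell_{_{\mathcal{M}}}^{^{[F]}}\right)$ is essentially formal. The genuine gap is in your left-hand inclusion. You propose a natural map $[|\pi|]_{_1}(\mathcal{L})\ \rightarrow\ \pi_{_1}\left(\mathcal{M}_{_{g,n}},[F]\right)$ and then postcompose with $\alpha$; but the fundamental germ does not map into the fundamental group. The paper records the containment in the opposite direction, $\pi_{_1}(L,x)\ <\ [|\pi|]_{_1}(\mathcal{L},x)$, and the germ is only a groupoid of tail-equivalence classes of sequences $\{g_{_{\alpha}}h_{_{\alpha}}^{^{-1}}\}$ converging transversally to $x$ --- these are not loops in the moduli space, so your factorisation through $\pi_{_1}$ does not exist in general and the left inclusion would not follow. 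The paper's route is different: it first identifies $\alpha\left([|\pi|]_{_1}(\mathcal{L}),[F]\right)$ with the kernel $K_{_d}\ =\ \text{Ker}\left(\text{Stab}_{_{\text{MCG}_{_{g,n}}}}\left(\ell_{_{\mathcal{M}}}^{^{[F]}}\right)\ \rightarrow\ \text{Aut}\left(\pi_{_2}([F]),\lambda\right)\right)$ via \eqref{eq:Ker}, so that the left inclusion becomes the statement, imported from \cite{RW}, that $K_{_d}$ is the lower bound on the subgroups reachable by $\alpha$, sitting at the bottom of the sequence $1\rightarrow K_{_d}\rightarrow \text{Stab}\rightarrow\text{Aut}\left(\pi_{_3},\lambda,\mu\right)\rightarrow 1$.

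The equality clause is also argued differently. You reason geometrically that a pants decomposition is maximal, so the germ saturates the $3g-3+n$ Dehn twists and exhausts the stabiliser; you correctly flag that excluding stabilising classes not generated by these twists is the hard step, but the auxiliary results you invoke (the constrained pants subgroup and non-criticality for $f_{_{\text{sys}}}$) do not actually supply that exclusion. The paper instead runs a dimension-count consistency check through deformation retraction: in the saturation limit it sets $\alpha\left([|\pi|]_{_1}(\mathcal{L}),[F]\right)\left(\mathcal{T}_{_{g,n}}\right)\ \equiv\ \mathcal{P}_{_{g,n}}\big|_{_{[F]}}$ and equates $\text{VCohdim}$ of the germ image with $\text{dim}\,\mathcal{P}_{_{g,n}}\big|_{_{[F]}}$, while away from the limit the inequality of virtual cohomological dimensions is strict. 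If you wish to stay aligned with the paper, replace the germ-to-$\pi_{_1}$ map by the kernel identification \eqref{eq:Ker} and phrase the saturation claim at the level of virtual cohomological dimension.
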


The present work is structured as follows: Section \ref{sec:1} introduces deformation retraction from the point of view of homotopy theory, as well as in the original formulation put forward by Thurston for genus-$g$ curves with and without punctures. We briefly overview the notions of the systole function, $f_{_{\text{sys}}}$, as one of three metric invariants of interest to us. We then turn to explaining how primitive cohomologies arise in the algebro topologic analysis of \cite{RW}, and how this can be used to address the calculation of GW-invariants for complete intersections in projective space, to which later Sections will be devoted. 

Section \ref{sec:2} briefly overviews GW-invariants with primitive cohomology insertions in two main cases. At this stage, we introduce two other metric invariants, namely the injectivity radius, and the generalised systole function. These tools will be useful for proofs of statements in later sections. The remainder of the Section comments on the impact of primitive cohomology insertions on topological recursion of GW-invariants for complete intersections in projective space.

Section \ref{sec:5} Introduces the main ingredient of our analysis, namely laminations, in the context of geometric group theory. The crucial point of the present work is highlighting how such geometric topology tool can be used to gain insight in the algebro-geometric analysis of complete intersections in projective spaces.  The main group of interest to us is the MCG of complete intersections in projective space, properties of its subgroups, and relations among them. In particular, we focus on the relation in between the MCG of the moduli space of the varieties in question and how primitive cohomology insertions affect the geometric group theory properties of  the resulting MCG.

Section \ref{sec:6} ultimately combines tools from previous parts of the present work and relate this to the construction of the obstruction bundle, naturally arising in the calculation of invariants of algebraic varieties with singular moduli space. In so doing, we show how it relates with the work of \cite{KKS}, while providing promising new tools that could potentially aid towards generalising the mathematical formulation of GW-invariants for arbitrary varieties. In particular, we highlight the importance of the lack of residual finiteness, and the role of the fundamental germ in determining a lower bound on the dimension of the Thurston spine.  Making use of metric invariants defined in Section \ref{sec:2}, we also explain how a generalised entropy theorem on such varieties can be extracted from previous literature, in a way that could potentially aid towards furthering homological mirror symmetry understandings.    

\tableofcontents

\section{\textbf{Equivariant deformation retraction}}  \label{sec:1}

\subsection{The fundamental group and deformation retraction}

\begin{definition}   Let $X$ be a space; let $x_{_o}$ be a point of $X$. A path in $X$ that begins and ends at $x_{_o}$ is a loop based at $x_{_o}$. The set of path homotopy classes of loops based at $x_{_o}$, with operation. $*$ is the fundamental group of $X$ relative to the point $x_{_o}$, $\pi_{_1}\left(X,x_{_o}\right)$. $*$ denotes the product operation on paths, inducing a well-defined operation on equivalence classes as follows  
\begin{equation}  
[f]*[g]=[f*g] \nonumber 
\end{equation} 
\end{definition} 

\begin{definition} Let $A$ be a subspace of $X$. $A$ is said to be a deformation retract of $X$ if there is a continuous map   
\begin{equation}  
H:\ X\ \times\ I\ \longrightarrow\ X  
\nonumber
\end{equation}     
such that     
\begin{equation}    
H(x,0)\ =\ x\ \ \ \ , \ \ \ \ H(x,1)\ \in\ A\ \ \ \ \ \ \ \forall\ x\ \in\ X
\nonumber
\end{equation}  
and 
\begin{equation}  
H(a,t)\ =\ a\ \ \ \ \forall\ a\ \in\ A  \nonumber
\end{equation} 
The homotopy $H$ is called deformation retraction of $X$ onto $A$. The map 
\begin{equation}  
r(x)\ =\ H(x,1)  \nonumber
\end{equation}  
is a retraction of $X$ onto $A$, and $H$ is a homotopy between the identity map of $X$ and the map $j\circ\ r$, where  
\begin{equation}  
j:\ A\ \longrightarrow\ X  \nonumber
\end{equation}   
is the inclusion map.      
\end{definition}

\begin{theorem}  Let $A$ be a deformation retract of $X$, and $x_{_o}\in A$. Then, the inclusion map  
\begin{equation}  
j:\ \left(A,x_{_o}\right)\ \longrightarrow\ \left(X,x_{_o}\right)  \nonumber
\end{equation}  
induces an isomorphism of fundamental groups. 
\end{theorem}

\begin{theorem} [Van Kampen]   Let $X=U\cup V$, $U,V$ open in $X$; assume that $U,V$ and $U\cap V$ are path-connected. Let $x_{_o}\in U\cap V$. Let $H$ be a group, and  
\begin{equation}  
\phi_{_1}:\ \pi_{_1}(U,x_{_o})\ \longrightarrow\ H  
\ \ \ \ \ \ \ , \ \ \ \ \ \ \     
\phi_{_2}:\ \pi_{_1}(V,x_{_o})\ \longrightarrow\ H \nonumber
\end{equation}  
be a homeomorphism. Let $i_{_1}, i_{_2}, j_{_1}, j_{_2}$ be the homeomorphisms indicated in the following diagram, each induced by inclusion.

\begin{equation}   
\begin{aligned}    
&\ \ \ \ \ \ \ \ \ \ \ \ \ \ \ \ \ \ \ \ \ \ \ \ \ \ \ \ \ \ \ \ \pi_{_1}(U,x_{_o})\\  
&\ \ \ \ \ \ \ \ \ \ \ \ \ \ \    i_{_1}\ \ \nearrow\ \ \ \ \ \ \ \  \downarrow\ j_{_1} \ \ \    \ \ \ \ \ \ \ \searrow\ \alpha_{_1}\\
&\pi_{_1}(U\cap V, x_{_o})\ \longrightarrow\ \pi_{_1}(X, x_{_o})\ \color{red}\longrightarrow\ \color{black}H\\    
&\ \ \ \ \ \ \ \ \ \ \ \ \ \ \  i_{_2}\ \ \searrow\ \ \ \ \ \ \ \  \uparrow\ j_{_2} \ \ \  \ \ \ \ \ \   \ \nearrow\ \alpha_{_2}\\
&\ \ \ \ \ \ \ \ \ \ \ \ \ \ \ \ \ \ \ \ \ \ \ \ \ \ \ \ \ \ \ \ \pi_{_1}(V,x_{_o})\\
\end{aligned}  
\end{equation}     
If
\begin{equation}  \alpha_{_1}\ \circ\ i_{_1}\ =\ \alpha_{_2}\ \circ\ i_{_2}  \nonumber
\end{equation}  
there is a unique homeomorphism  
\begin{equation}  
\alpha:\ \pi_{_1}(X, x_{_o})\ \longrightarrow\ H  \nonumber
\end{equation}     
such that   
\begin{equation}  
\alpha\ \circ\ j_{_1}\ =\ \alpha_{_1}\ \ \ \ \ \ \ , \ \ \ \ \ \ \ \ \alpha\ \circ\ j_{_2}\ =\ \alpha_{_2}  \nonumber
\end{equation}   
\end{theorem}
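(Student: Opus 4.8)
The plan is to establish the stated universal property by the classical combinatorial route, which splits naturally into a generation statement, a uniqueness argument that follows formally from it, and an existence argument whose well-definedness carries the genuine content. Throughout I write $G = \pi_{_1}(X, x_{_o})$, $G_{_1} = \pi_{_1}(U, x_{_o})$, $G_{_2} = \pi_{_1}(V, x_{_o})$, and $G_{_0} = \pi_{_1}(U \cap V, x_{_o})$, so that the diagram is to be read as the assertion that $G$ is the pushout (amalgamated product) of $G_{_1} \leftarrow G_{_0} \to G_{_2}$; here the maps $\alpha_{_1}, \alpha_{_2}$ into $H$ play the role of the two homomorphisms $\phi_{_1}, \phi_{_2}$ fixed in the hypotheses.

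First I would prove the \emph{generation lemma}: every class in $G$ is a finite product of classes of the form $j_{_1}(g_{_1})$ with $g_{_1} \in G_{_1}$ and $j_{_2}(g_{_2})$ with $g_{_2} \in G_{_2}$. Given a loop $f : I \to X$ based at $x_{_o}$, the sets $f^{-1}(U)$ and $f^{-1}(V)$ form an open cover of the compact metric space $I$, so by the Lebesgue number lemma there is a partition $0 = s_{_0} < s_{_1} < \cdots < s_{_n} = 1$ with each $f([s_{_{k-1}}, s_{_k}])$ contained in $U$ or in $V$. After merging any two adjacent subintervals that map into the same set, the interior cut points $f(s_{_k})$ all lie in $U \cap V$, and since $U \cap V$ is path-connected I can choose auxiliary paths from $x_{_o}$ to each $f(s_{_k})$ inside $U \cap V$. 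Splicing these in expresses $[f]$ as a product of loops each supported entirely in $U$ or in $V$, which is exactly the claim. Uniqueness of $\alpha$ is then immediate: since $G$ is generated by $\mathrm{im}(j_{_1}) \cup \mathrm{im}(j_{_2})$, any homomorphism satisfying $\alpha \circ j_{_1} = \alpha_{_1}$ and $\alpha \circ j_{_2} = \alpha_{_2}$ is pinned down on a generating set, hence determined.

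The substance lies in existence. I would define $\alpha$ on a class by choosing a decomposition as a product of $U$- and $V$-loops and applying $\alpha_{_1}$ or $\alpha_{_2}$ factorwise, then verify this assignment is well defined and a homomorphism. Two ambiguities must be controlled. The first is that a loop lying in $U \cap V$ could be routed through either $\alpha_{_1}$ or $\alpha_{_2}$; here the compatibility hypothesis $\alpha_{_1} \circ i_{_1} = \alpha_{_2} \circ i_{_2}$ guarantees the two readings agree. The second, and the main obstacle, is invariance under a path homotopy $F : I \times I \to X$: applying the Lebesgue number lemma to the open cover $\{F^{-1}(U), F^{-1}(V)\}$ of the square yields a grid of subrectangles each mapping into $U$ or $V$, and I would then track how the associated word in $H$ evolves as the partition of the bottom edge is deformed, subrectangle by subrectangle across the grid, into that of the top edge. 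Each elementary move alters the word only by a relation that the compatibility condition forces $\alpha_{_1}$ and $\alpha_{_2}$ to respect, so the image in $H$ is unchanged. Assembling these elementary moves into a coherent induction over the grid is the bookkeeping-heavy step where care is genuinely required; once it is in place, $\alpha$ is seen to be a well-defined homomorphism with the required properties, completing the argument.
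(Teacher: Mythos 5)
The paper states this result purely as recalled background — it is the classical Seifert--van Kampen theorem (in Munkres' formulation, with ``homeomorphism'' evidently a typo for ``homomorphism'') — and offers no proof of its own, so there is nothing in the text to compare your argument against line by line. Your proposal is the standard textbook proof and its outline is correct: the Lebesgue-number subdivision of a loop gives the generation lemma, uniqueness of $\alpha$ follows formally because $\mathrm{im}(j_1)\cup\mathrm{im}(j_2)$ generates $\pi_1(X,x_o)$, and existence reduces to well-definedness of the factorwise assignment, checked by subdividing a homotopy square into subrectangles each mapping into $U$ or $V$ and passing from the bottom edge to the top edge one elementary move at a time, with the compatibility condition $\alpha_1\circ i_1=\alpha_2\circ i_2$ absorbing each move.

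Two points you compress deserve to be flagged, since they are where a written-out proof actually spends its effort. First, the value of $\alpha$ on a decomposition depends not only on the partition but also on the auxiliary paths from $x_o$ to the cut points; you must check independence of that choice too, which works because two such paths to a point of $U\cap V$ differ by a loop in $U\cap V$ (handled by compatibility), while for grid vertices landing only in $U$ or only in $V$ the auxiliary path must be chosen inside that set alone. Second, the ``coherent induction over the grid'' requires perturbing the grid so that each vertex of the subdivision is assigned consistently to $U$, $V$, or $U\cap V$ and the corresponding auxiliary paths are chosen once and for all; without that normalization the elementary moves do not literally compose. Neither point is a gap in the strategy — both are resolved exactly as in Munkres \S 70 or Hatcher's Theorem 1.20 — but they are the genuine content of the well-definedness step, and a complete write-up should make them explicit rather than fold them into ``bookkeeping.''
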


\subsection{Thurston's deformation retraction}    

\begin{definition}  Teichm$\ddot{\text{u}}$ller space, $\mathcal{T}_{_{g,n}}$, is the space of all conformal equivalence classes of marked Riemannian surfaces of genus $g$ with $n$ punctures.  The Teichm$\ddot{\text{u}}$ller space of a surface is a contractible space on which the MCG acts properly-discontinuously. The minimal space on which MCG acts properly-discontinuously is achieved by deformation retraction. 

\end{definition}   

\begin{definition} A systole in a closed hyperbolic surface is a closed geodesic of minimal length.
\end{definition}   

\begin{definition}  The systole function 
\begin{equation}  
f_{_{\text{sys}}}:\ \mathcal{T}_{_g}\ \longrightarrow\ \mathbb{R}_{_+}    
\end{equation}  
is the function that maps $x\in\mathcal{T}_{_g}$ to the length of the systoles at $x$. $f_{_{\text{sys}}}$ is a topological Morse function, featuring a finite number of critical points. Specifically, the number of such critical points is twice the number of pair of pants decompositions.

\end{definition}

\begin{definition} A set of closed geodesics fills if each component of the complement of their union is contractible, namely if it cuts the surface into polygons.
\end{definition}  
Thurston proved that the moduli space of closed hyperbolic surfaces of genus $g$ deformation retracts into the set of closed hyperbolic surfaces with filling systoles, \cite{Th}.

\begin{definition}The mapping class group, MCG$_{_{g,n}}$, acts properly-discontinuously on $\mathcal{T}_{_{g,n}}$, and stabilisers of points are all finite. Then,  
\begin{equation}   
\mathcal{M}_{_{g,n}}\ \overset{def.}{=}\ \mathcal{T}_{_{g,n}}/\text{MCG}_{_{g,n}}  
\end{equation}  
is the moduli space of curves, whose points are conformal equivalence classes of unmarked Riemann surfaces of genus $g$ with $n$ punctures. 

\end{definition}  

\begin{definition}  A CW-complex is a combinatorial object, playing a crucial role in Algebraic Topology. Indeed, most spaces of interest to Algebraic Topologists are homotopy equivalent to CW-complexes. C- stands for closure finiteness, and W for weak topology.   

Among the main features of CW-complexes is the fact that they are locally contractible topological spaces.

\end{definition}  

\begin{definition}  Deformation retraction is a shrinking of $\mathcal{T}_{_g}$ to a smaller space on which MCG still acts properly-discontinuously.

\end{definition}

\begin{definition}  The image of deformation retraction is referred to as the Thurston spine, $\mathcal{P}_{_g}$. It is a CW-complex contained in $\mathcal{P}_{_g}$, as the collection of points representing hyperbolic surfaces cut into polygons by  the set of systoles. Therefore, $\mathcal{P}_{_g}$ contains the critical points of $f_{_{\text{sys}}}$.

\end{definition}

\begin{proposition}  Determining the MCG corresponds to identifying a preferred filling lamination, the Thurston spine. 
\end{proposition}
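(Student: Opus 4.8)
The plan is to exhibit the Thurston spine $\mathcal{P}_{g}$ as the minimal equivariant model through which all the homotopy-theoretic data of the mapping class group factors, and then to read off that data from the filling systole configurations that index its cells. First I would recall that $\mathcal{T}_{g,n}$ is contractible and that $\text{MCG}_{g,n}$ acts on it properly discontinuously with finite stabilisers; this makes $\mathcal{T}_{g,n}$ a model for $E\,\text{MCG}_{g,n}$, so that every cohomological invariant of the group is already carried by the Teichm\"uller space together with its action.

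Second, I would invoke Thurston's deformation retraction of moduli space onto the locus of surfaces whose systoles fill. Because the retraction is $\text{MCG}_{g,n}$-equivariant and a homotopy equivalence, the spine $\mathcal{P}_{g}$ inherits contractibility and the properly discontinuous action with the same stabilisers, while attaining the minimal dimension among such models. Consequently $\mathcal{P}_{g}/\text{MCG}_{g,n}$ is a rational (orbifold) classifying space for $\text{MCG}_{g,n}$, and the group---up to the equivalence relevant for its virtual cohomological dimension---is determined by the cell structure of $\mathcal{P}_{g}$ together with how the group permutes its cells.

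Third, I would identify the cells. Each point of $\mathcal{P}_{g}$ records a hyperbolic surface cut into polygons by its systoles; the isotopy class of that filling system of geodesics is precisely a filling lamination, and distinct cells correspond to distinct such laminations. Choosing a fundamental domain for the action is therefore tantamount to choosing one preferred filling lamination $\mathcal{L}$, whose stabiliser in $\text{MCG}_{g,n}$, together with the face-identifications of the spine, reassembles the whole group. The assembly is a groupoid/Van Kampen argument: the earlier Van Kampen theorem lets me present the $\pi_{1}$ of the quotient---hence the acting group---from the stabilisers of the cells meeting along the chosen lamination and the incidence relations between adjacent filling configurations.

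The main obstacle will be making precise the sense in which the group is ``determined'' and proving completeness of the lamination data, i.e. that passing from the surface to the isotopy class of its filling systoles loses nothing needed to recover the action, so that the correspondence is genuinely bijective and not merely surjective. A secondary difficulty is the punctured case $n>0$, where Thurston's retraction must be adapted and one must still verify that the resulting spine is a finite-dimensional CW-complex carrying the full action.
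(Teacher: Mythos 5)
Your route is genuinely different from the paper's. The paper disposes of this proposition in two sentences: it appeals directly to the definition of the Thurston spine and to the fact that the spine can be built from two filling multicurves, one of which may be taken to be a lamination giving a pair of pants decomposition of $\mathcal{M}_{g,n}$ --- so the ``preferred filling lamination'' is exhibited concretely as one of the two multicurves generating the spine, and nothing more is said. You instead take the classifying-space road: $\mathcal{T}_{g,n}$ as a model for $E\,\mathrm{MCG}_{g,n}$, equivariance and homotopy-invariance of Thurston's retraction, identification of the cells of $\mathcal{P}_g$ with isotopy classes of filling systole configurations, and a Van Kampen/groupoid reassembly of the group from cell stabilisers and incidence data. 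Your version is more substantive and actually explains \emph{why} the spine should determine the group, which the paper's proof leaves implicit; it also connects naturally to the identity $\dim\mathcal{P}_g = \mathrm{VCohdim}(\mathrm{MCG}_g)$ used elsewhere in the paper. What it does not do is touch the pants-decomposition multicurve that the paper's proof singles out as \emph{the} preferred lamination --- and since that choice is what the rest of the paper builds on (the family $\{\mathcal{P}\}_{\mathcal{L}}$, the constrained decompositions in Section 4), your argument proves a slightly different, more homotopy-theoretic version of the claim. Be aware also that the two obstacles you flag at the end (the precise sense in which the group is ``determined,'' and the punctured case) are exactly the content of the proposition as informally stated, so your plan defers rather than closes the gap; the paper's proof, terse as it is, at least commits to a specific mechanism.
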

\begin{proof}     This follows from the definition of the Thurston spine, together with the fact that the latter can be built from two filling multi-curves. One such multicurve can be suitably taken to be a lamination providing a pair of pants decomposition of $\mathcal{M}_{_{g,n}}$.
\end{proof}

\begin{proposition}  Deformation retraction is an extremal limit (a global limit) of the obstruction bundle associated to singular moduli spaces.
\end{proposition}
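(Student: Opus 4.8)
The plan is to read the statement as an assertion about a one-parameter family of retractions of Teichm\"uller space, in which the obstruction bundle of the singular moduli space supplies the interpolating data and deformation retraction onto the Thurston spine emerges as the endpoint. First I would make the phrase ``extremal limit'' precise by identifying the controlling parameter with the degree to which the primitive cohomology insertions $PH^{\bullet}_{\pm}(X,\omega)$ depend on the symplectic form $\omega$. By the results discussed above, the fibre dimension of the obstruction bundle over a point of $\mathcal{M}_{g,n}$ is governed precisely by these primitive cohomologies, which jump as $\omega$ varies; hence the obstruction bundle is not a fixed vector bundle but a family whose rank is locally constant away from a stratification of $\mathcal{M}_{g,n}$ induced by the graph $\Gamma$ dressing the moduli space, the walls of that stratification recording the nodes. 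The extremal, or global, value of the parameter is then the symplectic-form-independent configuration, i.e. the saturation limit appearing in the final theorem.

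The second step is to give deformation retraction a variational characterisation so that it can be recognised as this global limit. Here I would use the systole function $f_{\text{sys}}$: since the Thurston spine $\mathcal{P}_{g}$ is the locus of surfaces cut into polygons by their systoles, and these are exactly the critical points of $f_{\text{sys}}$, the retraction $r(x)=H(x,1)$ onto $\mathcal{P}_{g}$ is already singled out as an extremum of a global Morse function. The claim to establish is that pushing the obstruction bundle toward its symplectic-form-independent configuration collapses every fibre onto the stratum on which the primitive insertions no longer jump, and that this stratum is precisely the image of the deformation retraction. The key computation is to show that the Euler class of the obstruction bundle, which corrects the virtual fundamental class, degenerates in this limit to the class carried by $\mathcal{P}_{g}$, so that the virtual count localises onto the spine.

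The main obstacle, and the step I expect to be most delicate, is identifying the limiting stratum of the $\Gamma$-stratification with the Thurston spine itself, because the obstruction bundle lives naturally over the moduli space of stable maps whereas deformation retraction is a flow on the Teichm\"uller space of curves. Bridging these requires showing that the nodes of $\Gamma$ — equivalently the pieces of a pair of pants decomposition along the filling lamination $\mathcal{L}$ — are in bijection with the walls at which the obstruction fibres jump. I would attempt this using the fact established above that the spine can be built from two filling multicurves, one of which provides a pants decomposition of $\mathcal{M}_{g,n}$: matching that decomposition to the node structure of $\Gamma$ should identify the saturation limit with $\mathcal{P}_{g}$. Once the bijection is in place, the proposition follows by observing that deformation retraction is recovered exactly when the family has been pushed to its global, symplectic-form-independent limit, that is, when the obstruction bundle has been maximally collapsed.
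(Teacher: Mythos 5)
Your second paragraph is, in essence, the entirety of the paper's own proof: the paper's argument consists only of the remark that the obstruction theory is obtained by identifying a critical point of $f_{\text{sys}}$, together with the dimension identity $\dim\mathcal{P}_{g} = \text{VCohdim}\left(\text{MCG}_{g}\right)$, which is what plays the role of the ``extremal/global limit'' (it is the value attained in the saturation limit where the primitive insertions are independent of the symplectic form). So at its core your proposal takes the same route — characterise the retraction variationally via $f_{\text{sys}}$ and anchor the obstruction theory at its critical locus — and your identification of the controlling parameter with the dependence of the primitive cohomologies on $\omega$ matches the paper's later discussion of when equality in Theorem 1.1 is achieved. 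You should, however, state the $\text{VCohdim}$ identity explicitly, since in the paper that equation \emph{is} the quantitative content of the proposition.

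Where your proposal diverges is in the additional superstructure: the one-parameter family of retractions, the claim that the Euler class of the obstruction bundle degenerates onto the class carried by $\mathcal{P}_{g}$, and the proposed bijection between the walls of the $\Gamma$-stratification and the curves of a pants decomposition along $\mathcal{L}$. None of this appears in the paper, and none of it is actually established in your writeup — you flag the stratum-to-spine identification as the delicate step and then leave it as something you ``would attempt.'' That step is a genuine gap in your version as written: the obstruction bundle is defined over the moduli of sheaves/stable maps (Section 6 of the paper, via $\text{Ext}^{1}$ and $\text{Ext}^{2}$ at a point class $[F]$), while the deformation retraction lives on $\mathcal{T}_{g,n}$, and the bridge between them in the paper is supplied only by the identification of the stabiliser $\text{Stab}_{\text{MCG}_{g,n}}\left(\ell_{\mathcal{M}}^{[F]}\right)$ with the retraction map $\Phi$, not by any Euler-class localisation. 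If you want to keep the localisation claim, you would need to prove it; otherwise the safer course is to strip the proposal back to the two facts the paper actually uses.
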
   
\begin{proof}       
To find the obstruction theory associated to the theory, we need to identify a critical point of $f_{_{\text{sys}}}$, defining the corresponding obstruction theory.  
\begin{equation}  
\text{dim}\ \mathcal{P}_{_g}\ =\ \text{VCohdim}\left(\text{MCG}_{_g}\right)  
\label{eq:thusrt}
\end{equation}
\end{proof}    
\begin{proposition} [Thurston]  Let $C$ be a collection of curves on a surface that do not fill. Then, at any point of $\mathcal{T}_{_g}$, there are tangent vectors that simultaneously increase the lengths of all geodesics representing curves in $C$.
\end{proposition}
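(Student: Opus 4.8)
The plan is to dualise the question. Fix $X \in \mathcal{T}_g$ and regard each geodesic length function $\ell_\gamma$, for $\gamma \in C$ (which we may take to be finite), as a smooth function on Teichm\"uller space; I seek a single tangent vector $v \in T_X \mathcal{T}_g$ with $d\ell_\gamma(v) > 0$ for every $\gamma \in C$. By the theorem of the alternative (equivalently, a separating-hyperplane argument applied to the finite family of differentials in the cotangent space $T_X^{*} \mathcal{T}_g$), such a $v$ exists if and only if the zero covector does not lie in the convex hull $\Conv\{ d\ell_\gamma : \gamma \in C\}$. Thus the whole statement reduces to proving that, when $C$ fails to fill, $0 \notin \Conv\{d\ell_\gamma\}$.

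First I would make the cotangent classes geometric. Via Gardiner's variational formula, the differential $d\ell_\gamma$ is represented by the holomorphic quadratic differential $\Theta_\gamma$ obtained as the Poincar\'e series associated to the geodesic $\gamma$, so that a convex combination $\sum_\gamma t_\gamma \, d\ell_\gamma$ corresponds to $\Phi = \sum_\gamma t_\gamma \Theta_\gamma$, and $0 \in \Conv\{d\ell_\gamma\}$ would force $\Phi \equiv 0$ for some weights $t_\gamma \ge 0$ with $\sum_\gamma t_\gamma = 1$. The content to establish is that the vanishing of such a positively weighted combination can only occur when the supporting geodesics fill the surface.

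The geometric heart of the argument, and what I expect to be the main obstacle, is the explicit construction of an increasing direction out of the failure of filling. Since $C$ does not fill, by the definition above the complement $\Sigma \setminus \bigcup_{\gamma \in C} \gamma$ has a component $W$ that is not a polygon; $W$ is therefore an essential subsurface carrying room to deform. I would build a tangent vector $v$ supported (in the harmonic Beltrami sense) on $W$ that conformally opens up this region and thereby pushes the boundary geodesics --- the arcs of the curves of $C$ bounding $W$ --- outward. The delicate point is to verify, through Gardiner's formula, that the pairing $d\ell_\gamma(v) = \tfrac{2}{\pi}\,\mathrm{Re}\!\int_X \mu_v \, \Theta_\gamma$ is strictly positive simultaneously for every $\gamma \in C$, rather than merely non-negative; this requires controlling the sign of the interaction between the deformation localised in $W$ and each Poincar\'e-series differential. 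Convexity of the length functions (Kerckhoff, Wolpert) would then let one realise the first-order increase along an actual deformation path, though for the purely first-order statement the tangent vector itself already suffices.

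Finally I would package the two directions: the constructed $v$ exhibits a common increasing direction, which by the duality of the first step is equivalent to $0 \notin \Conv\{d\ell_\gamma\}$, while a filling collection leaves no such complementary room, making the dichotomy sharp. This recovers Thurston's statement and, in the systole setting, confirms that points where the systoles fail to fill cannot be critical points of $f_{\mathrm{sys}}$, consistent with the description of the Thurston spine $\mathcal{P}_g$ given above.
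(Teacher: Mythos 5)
The paper itself gives no proof of this proposition: it is stated as Thurston's result and left to the citation of his unpublished note \cite{Th}, so there is no internal argument to compare yours against, and your proposal must stand on its own. Your first reduction is sound as far as it goes: for a finite collection $C$, a separating-hyperplane (theorem-of-the-alternative) argument in $T_X^{*}\mathcal{T}_g$ shows that a common strictly increasing tangent vector exists if and only if $0\notin \Conv\{d\ell_\gamma : \gamma\in C\}$, and Gardiner's formula identifies each $d\ell_\gamma$ with the Poincar\'e series $\Theta_\gamma$. But this step merely reformulates the claim; the entire content of the proposition sits in the step you yourself label the ``delicate point,'' and that step is not carried out.

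Concretely, the gap is the simultaneous strict positivity $\operatorname{Re}\int_X \mu_v\,\Theta_\gamma>0$ for every $\gamma\in C$. Your candidate $\mu_v$ is supported in a non-contractible complementary component $W$, whose interior is by construction disjoint from every geodesic of $C$; the pairing with $\Theta_\gamma$ therefore only sees the restriction of $\Theta_\gamma$ to $W$, which is a holomorphic quadratic differential of uncontrolled argument there (small when $\gamma$ is far from $W$, and with no preferred sign), so ``conformally opening up $W$'' gives no evident control on even the sign of $d\ell_\gamma(v)$, let alone uniform strict positivity over all of $C$. There is also a technical wrinkle you elide: a Beltrami differential cannot be both harmonic and compactly supported in $W$, so one must separately argue that the Teichm\"uller class of your compactly supported $\mu_v$ is nonzero and that passing to the harmonic representative preserves the claimed positivity. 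This positivity is precisely the hard part of Thurston's assertion --- it is the reason the statement has been revisited and re-proved with care (cf.\ Irmer \cite{Irm}, and Akrout's proof that the systole function is a topological Morse function) --- so as written your proposal is an outline that correctly locates the difficulty but does not resolve it.
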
 
From this Proposition, it follows that all critical points of $f_{_{\text{sys}}}$ are contained in $\mathcal{P}_{_g}$. 
A similar analysis can be carried out in presence of punctures, \cite{H}.
\subsection{Stable homotopy theory}    
\subsubsection{Tangential structures}
\begin{definition}  Given a smooth manifold, $X$, a tangential structure is a lift of the classifying map  
\begin{equation}  
X\ \longrightarrow\ BGL(n)  \nonumber
\end{equation}    
of its tangent bundle through any prescribed map  
\begin{equation}  
f:\ B\ \longrightarrow\ BGL(n)  \nonumber
\end{equation}   
into the classifying space of the general linear group, up to homotopy:  
\begin{equation}    
\begin{aligned}    
&\ \ B\\
\text{tang. str.}\nearrow  \ \ &\ \ \downarrow\ f\\
X\ \longrightarrow\ &BGL(n)  \\    
\end{aligned}  \nonumber
\label{eq:commdiagr}   
\end{equation} 
\end{definition}  

In the context of the homotopy type of topological spaces, there is a weak topology equivalence   
\begin{equation}    
BGL(n)\ \simeq\ BO(n)   \nonumber
\end{equation}    
to the classifying space of the orthogonal group, the following diagram is equivalent to \eqref{eq:commdiagr}, 
\begin{equation}    
\begin{aligned}    
&\ \ B_{_d}\\
\nearrow  \ \ &\ \ \downarrow\ \theta\\
M\ \xrightarrow{\tau_{_M}}\ &BO(d)  \\    
\end{aligned}\nonumber
\end{equation}  

\begin{definition}  A $(B,f)$-structure is a pointed CW-complex 

\begin{equation}  
B_{_n}\ \in\ \text{Top}_{_{CW}}^{^*}/  \nonumber
\end{equation}  
equipped with a pointed Serre fibration to the classifying space $BO(n)$. 

\begin{equation}  
f_{_n}:\ B_{_n}\ \longrightarrow\ BO(n)  \nonumber
\end{equation}  
\end{definition}

Examples of $(B,f)$-structures include the following:  

\begin{enumerate}

\item $B_{_n}\ =\ BO(n)$, $f_{_n}\ =\ \text{id}$ is the orthogonal structure.     

\item $B_{_n}\ =\ EO(n)$, $f_{_n}$ the universal principal bundle projection (namely a framing structure).  

\item $B_{_n}\ =\ BSO(n)\ =\ EO(n)/SO(n)$, $f_{_n}$ the canonical projection provides an orientation structure.  

\item $B_{_n}\ =\ B\text{Spin}(n)\ =\ EO(n)/\text{Spin}(n)$, $f_{_n}$  is a spin structure. 

\end{enumerate}    

\begin{definition} Given a smooth manifold, $X$, of dimension $n$, and given a $(B,f)$-structure, then a $(B,f)$-structure on the manifold is an equivalence class of the following structure:  
\begin{itemize}  
\item An embedding  
\begin{equation}  
\iota_{_X}:\ X\ \hookrightarrow\ \mathbb{R}^{^k}  \nonumber
\end{equation}  
for some $k\in\mathbb{N}$. 
\item A homotopy class of a lift $\hat g$ of the classifying map $g$ of the tangent bundle  
\begin{equation}    
\begin{aligned}    
&\ \ B_{_n}\\
\hat g\nearrow  \ \ &\ \ \downarrow\ \theta\\
X\ \xrightarrow{g}\ &BO(n)  \\    
\end{aligned}   \nonumber
\end{equation}    
\end{itemize}     
\end{definition}

\begin{definition} In homotopy theory, an H-space consists of: a type $A$, a basepoint, $e$, a binary operation  
\begin{equation}  
\mu:\ A\ \longrightarrow\ A\ \longrightarrow\ A   ,\nonumber
\end{equation}  
a left- and right-unitor  
\begin{equation}  
\lambda:\ \prod_{_{(a:A)}}\ \mu(e,a)\ =\ a  \ \ \ \ \ \ \ \ \ \ \ \   
\rho:\ \prod_{_{(a:A)}}\ \mu(a,e)\ =\ a  \nonumber
\end{equation}  
\end{definition}  

In the notation of \cite{RW}, this translates as follows.

\begin{definition}  A stable tangential structure is a Serre fibration  
\begin{equation}   
\overline\theta:\ B\ \longrightarrow\ BO  \nonumber
\end{equation}  
\end{definition}  
A $\overline\theta$-structure on a manifold $M$ is a choice of a lift    
\begin{equation}  
\ell:\ M\ \longrightarrow\ B  \nonumber
\end{equation}  
of the map  
\begin{equation}  
\tau_{_M}^{^S}:\ BSO\ \longrightarrow\ BO  \nonumber
\end{equation}
However, the map $\tau_{_M}$ is not unique, therefore, in order to study the action of Diff$(M)$ on $\overline\theta$-structures, it is best to consider the space of $\theta$ -structures on $M$, defined as the space of bundle maps  
\begin{equation}  
\Theta(M)\ \overset{def.}{=}\ \text{Bun}\left(TM, \theta^{^*}\gamma_{_d}\right)  \nonumber
\end{equation}
that is homotopically equivalent to the space of lifts of the diagram 
\begin{equation}    
\begin{aligned}    
&\ \ B_{_d}\\
\nearrow  \ \ &\ \ \downarrow\ \theta\\
M\ \xrightarrow{\tau_{_M}}\ &BO(d)  \\    
\end{aligned}   \nonumber
\end{equation}
Given a complete intersection in projective space
\begin{equation}  
X_{_d}\ \subset\ \mathbb{CP}^{^4}.  \nonumber
\end{equation}
Treating $X_{_d}$ as a degree-$d$ hypersurface, its tangent bundle satisfies the following identity  
\begin{equation}   
TX\ \oplus\ \mathcal{O}(d)\bigg|_{_X}\ =\ T\mathbb{CP}^{^4}\bigg|_{_X}  
\label{eq:tang}    
\end{equation}  
Importantly, $X_{_d}$ comes equipped with two natural tangential structures, that behave differently under the action of MCG.  

\begin{itemize}

\item   A $\theta^{^{\mathbb{C}}}$-structure. 

\item  A $\theta^{^{\text{hyp}}}$-structure, $\ell_{_X}^{^{\text{hyp}}}$, induced by the maps  

\begin{equation}  
\overline\theta^{^{\text{hyp}}}:\ \mathbb{CP}^{^{\infty}}\ \xrightarrow{}\\ BU\ \xrightarrow{\overline\theta^{^{\mathbb{C}}}}\ BO  
\end{equation}
\end{itemize}   

While both are preserved under the action of Mon$_{_d}$, the latter is not preserved by MCG$_{_d}$, the reason being that 

\begin{equation}   
\overline\theta^{^{\text{hyp}}}:\ \mathbb{CP}^{^{\infty}}\ \longrightarrow\ BO  
\label{eq:stable}    
\end{equation}   
is no longer a map of $H$-spaces.   

The action of MCG$_{_d}$ on the set $\theta^{^{\text{hyp}}}\left(X_{_d}\right)$ preserves the subset $\theta^{^{\text{hyp}}}\left(X_{_d};x,+\right)$ of those $\theta^{^{\text{hyp}}}$-structures which map to $x\in H^{^2}\left(X_{_d};\mathbb{Z}\right)$ under$\hat X$ and that induce the standard orientation of $X_{_d}$. This action descends to an action of Aut$\left(\pi_{_3}\left(X_{_d}\right),\lambda\right)$ on the set  

\begin{equation}  
\theta^{^{\text{hyp}}}\left(X_{_d};\mathbb{Z}\right)\bigg/\text{SMCG}_{_d}  \nonumber
\end{equation}    
This set is a torsor for 
\begin{equation}  
H^{^3}\left(X_{_d};\mathbb{Z}/2\right)\ =\ H^{^3}\left(X_{_d};\pi_{_3}(O)\right)\bigg/\text{Im}\left(\delta^{^{\text{hyp}}}\bigg|_{_{\text{SMCG}_{_d}}}\right)  \nonumber
\end{equation}   
To a $\theta^{^{\text{hyp}}}$ structure, $\ell$, there is an associated quadratic form    
\begin{equation}     
\mu_{_{\ell}}:\ \pi_{_3}\left(X_{_d}\right)\ \longrightarrow\ \mathbb{Z}/2  
\end{equation}  
on  
\begin{equation}  
\pi_{_3}\left(X_{_d}\right)\ \xleftarrow{\sim}\ \pi_{_4}\left(\mathbb{CP}^{^{\infty}}, X_{_d}\right)\ \xrightarrow{\sim}\ H_{_4}\left(\mathbb{CP}^{^{\infty}},X_{_d};\mathbb{Z}\right)  
\end{equation}  
obtained by considering this as a surgery kernel for the normal map  
\begin{equation}  
\ell:\ X_{_d}\ \longrightarrow\ \mathbb{CP}^{^{\infty}}  
\end{equation}   
covered by the corresponding bundle map. This $\mu_{_{\ell}}$ is a quadratic refinement of the intersection form $\lambda$, in the sense that   
\begin{equation}  
\mu_{_{\ell}}(a+b)\ =\ \mu_{_{\ell}}(a)+\mu_{_{\ell}}(b)\ +\ \lambda(a,b)\ \text{mod} 2  \nonumber
\end{equation}  

$\mu_{_{\ell_{_{X_{_d}}}^{^{\text{hyp}}}}}$ is Mon$_{_d}$-invariant. The image of Stab$_{_{\text{MCG}_{_d}}}\left(\ell_{_{X_{_d}}}^{^{\text{hyp}}}\right)$ in Aut$\left(\pi_{_3}\left(X_{_d}\right),\lambda\right)$ is the stabiliser Aut$\left(\pi_{_3}\left(X_{_d}\right),\lambda,\mu\right)$ of the quadratic form $\mu\equiv\mu_{_{\ell_{_{X_{_d}}}^{^{\text{hyp}}}}}$ of the intersection form $\lambda$. 
The stable tangential structure \eqref{eq:stable} is not the most sophisticated structure we can endow a hypersurface with. Indeed, \eqref{eq:tang} shows that $X_{_d}$ can be endowed with the unstable tangential structure given by the homotopy equaliser  
\begin{equation}  
B\ \longrightarrow\ BU(3)\ \times\ \mathbb{CP}^{^4}\ \rightrightarrows\ BU(4)  \nonumber
\end{equation}  
of the maps classifying   
\begin{equation}  
\gamma_{_3}^{^{\mathbb{C}}}\ \oplus\ \mathcal{O}(d)  \nonumber
\end{equation}  
and $T\mathbb{CP}^{^4}$, respectively, made into a tangential structure via the natural maps  

\begin{equation}  
B\ \longrightarrow\ BU(3)\ \times\ \mathbb{CP}^{^4}\ \longrightarrow\ BU(3)\ \longrightarrow\ BO(6)  \nonumber
\end{equation}

\begin{equation}  
\delta^{^{\text{hyp}}}:\ \text{MCG}_{_d}\ \longrightarrow\ H^{^3}\left(X_{_d};\ \pi_{_3}(O)\right)\ \subset\ KO^{^{-1}}\left(X_{_d}\right)  \nonumber  
\end{equation}  

\begin{equation}  
\text{Stab}_{_{\text{MCG}_{_d}}}\left(\ell_{_{X_{_d}}}^{^{\text{hyp}}}\right)\ \overset{def.}{=}\ \text{Ker}\left(\delta^{^{\text{hyp}}}:\ \text{MCG}_{_d}\ \longrightarrow\ H^{^3}\left(X_{_d};\ \pi_{_3}(O)\right)\ \subset\ KO^{^{-1}}\left(X_{_d}\right)\right)    \nonumber
\end{equation}    
By further denoting with
\begin{equation}  
\text{SMCG}_{_d} \ \le\ \text{MCG}_{_d}  
\end{equation}  
the subgroup of diffeomorphisms that are trivially acting on $\pi_{_3}(X_{_d})$, 
\begin{equation}  
\text{Stab}_{_{\text{MCG}_{_d}}}\ \cap \ \text{SMCG}_{_d}\ = \ K_{_d}  
\end{equation} 
leading to a half-exact sequence  
\begin{equation}  
1\ \longrightarrow\ K_{_d}\ \longrightarrow \text{Stab}_{_{\text{MCG}_{_d}}}\left(\ell_{_{X_{_d}}}^{^{\text{hyp}}}\right)\ \longrightarrow\ \text{Aut}\left(\pi_{_3}\left(X_{_d}\right),\lambda\right)
\end{equation}
which can be turned into an exact sequence upon restricting to the image of $\text{Stab}_{_{\text{MCG}_{_d}}}\left(\ell_{_{X_{_d}}}^{^{\text{hyp}}}\right)$ in $\text{Aut}\left(\pi_{_3}\left(X_{_d}\right),\lambda\right)$ that stabilises the quadratic form  
\begin{equation}  
\mu\ =\ \mu_{_{\ell_{_{X_{_d}}}^{^{\text{hyp}}}}}  \nonumber
\end{equation}  
\begin{equation}  
1\ \longrightarrow\ K_{_d}\ \longrightarrow \text{Stab}_{_{\text{MCG}_{_d}}}\left(\ell_{_{X_{_d}}}^{^{\text{hyp}}}\right)\ \longrightarrow\ \text{Aut}\left(\pi_{_3}\left(X_{_d}\right),\lambda, \mu   \right)\ \longrightarrow\ 1
\end{equation}   
$\text{Stab}_{_{\text{MCG}_{_d}}}\left(\ell_{_{X_{_d}}}^{^{\text{hyp}}}\right)$ is the largest subgroup of MCG$_{_d}$ commuting with it. 
\begin{figure}[ht!]    
\begin{center}
\includegraphics[scale=0.75]{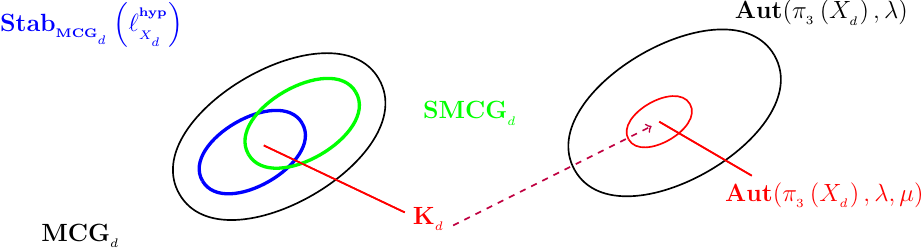}   
\caption{\small   This picture helps visualise some essential features of the work of \cite{RW}, that we will often be using. What is represented in this picture holds for all complete intersections in projective space. Notation is as explained in our treatment.}
\label{fig:charts} 
\end{center}
\end{figure} 
Upon extracting a similar short-exact sequence for \eqref{eq:monbeg}, the action of the stabiliser can therefore be equivalently considered as the deformation retraction on Teichm$\ddot{\text{u}}$ller space   
\begin{equation}  
\text{Stab}_{_{\text{MCG}_{_{g,n}}}}\left(\ell_{_{\mathcal{M}}}^{^{[F]}}\right)\ \overset{def.}{=}\ \Phi:\ \mathcal{T}_{_{g,n}}\ \longrightarrow\ \mathcal{P}_{_{g,n}}  
\end{equation}  
with corresponding Thurston spine       
\begin{equation}   
\mathcal{P}_{_{g,n}}\ =\ \Phi\left(\mathcal{T}_{_{g,n}}\right)\ =\ \text{Stab}_{_{\text{MCG}_{_{g,n}}}}\left(\ell_{_{\mathcal{M}}}^{^{[F]}}\right)\left(\mathcal{T}_{_{g,n}}\right),  \nonumber
\end{equation}  
This is one of the crucial observations supporting the Proof of Theorem \ref{Theorem:1.1}, which will be completed in Section \ref{sec:6}. As a particular limit of Theorem \ref{Theorem:1.1}, we correctly recover the following identity
\begin{equation}  
\text{VCohdim}\left(\text{MCG}_{_{g,n}}\right)\ =\ \text{VCohdim}\left(\text{Stab}_{_{\text{MCG}_{_{g,n}}}}\left(\ell_{_{\mathcal{M}}}^{^{[F]}}\right)\right)\ =\ \text{dim}\mathcal{P}_{_{g,n}}  
\end{equation}  
thereby providing an alternative proof to Thurston's conjecture, \cite{Th}.

\section{\textbf{A hyperbolic geometric perspective}}  \label{sec:2}    
As already anticipated in the Introduction, the calculation of GW-invariants for complete intersections in projective space has already been carried out by from a purely algebro-geometric perspective, \cite{ABPZ}. Our work heavily relies on some of their results, while at the same time providing an alternative perspective (from geometric topology), and mostly addressing with new tools a case that was not previously analysed in \cite{ABPZ}. Let us first outline the two main cases that one could be interested in studying for the varieties in question. 

The the major difference with respect to GW-calculations for other varieties is the presence of primitive cohomologies, that are intrinsic to  the complete intersection, and are not inherited from the ambient space. Following from the pioneering work of Jun Li, the authors of \cite{ABPZ} showed that a degeneracy formula can indeed be achieved upon replacing primitive cohomologies with a graph, $\Gamma$, whose vertices come equipped with a genus function and a $\beta$-function associated to second homologies. We refer to the original work \cite{ABPZ} where such properties are outlined in full detail. Composition of vertices ensures that upon bringing all vertices together, one would be able to recover the original GW-invariant. 
Essentially, the number of edges and vertices of the graph $\Gamma$ corresponds to that of the primitive insertions. GW-invariants are said to be \emph{simple} if all cohomological insertions, $\alpha_{_i}$, are simple. 

The calculation from algebro geometric techniques leads to two possible cases:

\setword{\textbf{Case 1}}{Word:case1}   The first case is that in which primitive cohomologies are completely determined by monodromies, \cite{ABPZ}, and thereby corresponds to a saturation of the map \eqref{eq:monbeg}. The corresponding GW-invariant, written as an integral over the virtual fundamental class of the moduli space, thereby reads as follows 
\begin{equation}  
\int_{_{[\overline{\mathcal{M}}_{_{\Gamma}}(X)]^{^{\text{vir}}}}}\prod_{_{i=1}}^{^n}\psi_{_i}^{^{k_{_i}}}\text{ev}_{_i}^{^*}\left(\alpha_{_i}\right)\prod_{_h}\psi_{_h}^{^{k_{_h}}}  \nonumber
\end{equation}   
where $\alpha_{_i}$ denote the cohomologies dressing the $n$ punctures on $\mathcal{M}_{_{g,n}}$, whereas the $\psi_{_i}$s denote the Chern characters. The major feature is the presence of additional Chern characters, where $h$ labels the internal edges of the graph.
If $\alpha_{_i}$ is independent w.r.t. $h$, it means that any decomposition and re-arrangement of the punctures between the vertices of $\Gamma$ is acceptable.      

\textbf{\setword{Case 2}{Word:case2}}     The second case adds an explicit dependence of the simple cohomologies to the edges of the graph $\Gamma$, 
\begin{equation}  
\int_{_{[\overline{\mathcal{M}}_{_{\Gamma}}(X)]^{^{\text{vir}}}}}\prod_{_{i=1}}^{^n}\psi_{_i}^{^{k_{_i}}}\prod_{_h}\psi_{_h}^{^{k_{_h}}}  \text{ev}_{_i}^{^*}\left(\alpha_{_i}(h)\right) \nonumber
\end{equation}   

As a consequence of this, the primitive cohomologies and monodromies needs to be generalised, to which part of Section \ref{sec:6} will be devoted. As we shall see, Case 1 will be recovered as a limiting case of this. Section \ref{sec:6} will be making use of some key observations made in Section \ref{sec:1}, in particular with regard to deformation retraction. For the time being, the remainder of this Section will be overviewing some additional tools we shall be needing for later purposes, especially the notion of the injectivity radius, and the generalised systole function.  
 \begin{figure}[ht!]    
 \begin{center}     
 \includegraphics[scale=0.46]{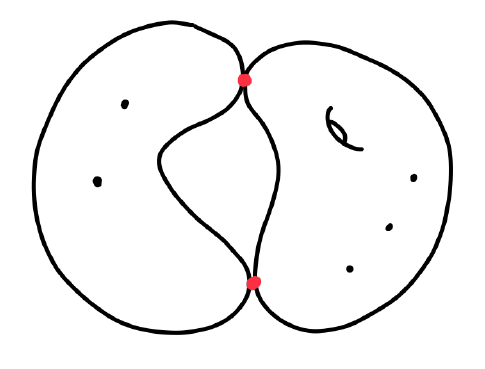}  \ \ \ \ \ \ \ \ \ \ \ \ \ \ \ \ \ 
 \includegraphics[scale=0.4]{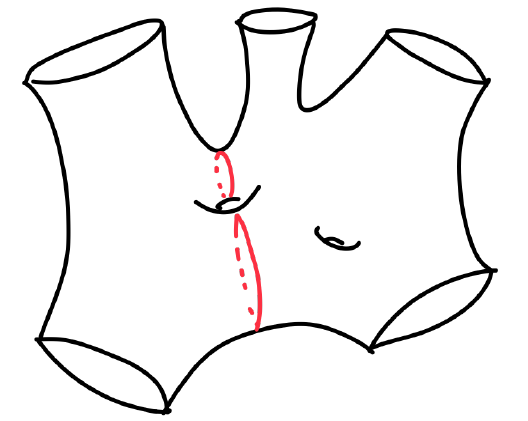}  
 \label{fig:lamination}    
 \caption{\small In red, we denote the location of the primitive cohomology insertions. The latter can be replaced by simple cohomology insertions at the price of adding an explicit graph dependence on the moduli space of the variety in question. The picture featuring on the left-hand-side is analogous to one presented in \cite{ABPZ}. On the right-hand-side, instead, we present a major model our work will keep referring to, but we emphasise that our statements apply to more general setups, with higher genus, punctures, and number of primitive cohomology insertions.}
 \end{center}  
 \end{figure}  

We conclude the Section by commenting on some expectations regarding topological recursion. 
 \subsection{Generalised systole function and injectivity radius}  

\begin{definition} A Hadamard space is a simply-connected Riemannian manifold, $(X,d)$ with non-positive sectional curvatures.    
\end{definition}    
In geometry, Hadamard spaces are non-linear generalisations of Hilbert spaces, oftentimes, equivalently referred to as CAT (0) spaces.  
Given  
\begin{equation}  
\Gamma\ <\ \text{Isom}(X)    \nonumber
\end{equation}  
finitely generated, discrete, and torsion-free, the associated Riemannian manifold reads
\begin{equation}  
M\ =\ X/\Gamma.  \nonumber
\end{equation}  
Given $p\in X$, the critical exponent of $\Gamma$ (or $M$) is  
\begin{equation}
\delta(M)\ \equiv\ \delta(\Gamma)\ \overset{def.}{=}\ \text{inf}\left\{\ s:\sum_{_{\gamma\in\Gamma}}e^{^{-sd(p,\gamma p)}}<\infty\right\}  
\label{eq:delta}  
\end{equation}  
Importantly, \eqref{eq:delta} can be used to control a wide range of geometric and topological properties of $M$, as well as providing upper bounds on the (co)homological dimension.    
\begin{definition} The injectivity radius of a complete Riemannian manifold is the largest $r$ such that every geodesic of length less than or equal to $r$is a shortest curve between its endpoints.    

\end{definition}

Let $\mathcal{M}$ be a complete Riemannian manifold with Riemannian metric, $g$, and induced distance, $d$. The exponential map at $x$ is   
\begin{equation}    
\begin{aligned}
\text{Exp}_{_x}:\ &T_{_x}M\ \longrightarrow\ \mathcal{M}    \\    
&\ \ \ \xi\ \ \ \ \  \mapsto\ \ \gamma_{_{\xi}}(1)
\end{aligned}
\end{equation}
where $T_{_x}M$ denotes the tangent space to $\mathcal{M}$ at $x$, whereas  
\begin{equation}  
\gamma_{_{\xi}}:\ [0,\infty)\ \longrightarrow\ \mathcal{M}  \nonumber
\end{equation}   
is the geodesic of $\mathcal{M}$ satisfying  
\begin{equation}  
\gamma(0)\ =\ x\ \ \ \ , \ \ \ \ \gamma^{^{\prime}}(0)\ =\ \xi  \nonumber
\end{equation}   
Denoting by    
\begin{equation}  
U_{_x}\mathcal{M}\ \equiv\ \{\xi\in T_{_x}\mathcal{M}\ |\ g_{_x}(\xi,\xi)=1\}  \nonumber  
\end{equation}
the unit tangent space at $x\in\mathcal{M}$, given $\xi\in U_{_x}\mathcal{M}$, the cut time of $\gamma_{_{\xi}}$ is  
\begin{equation}   
t_{_C}(\xi)\ \overset{def.}{=}\ \text{sup}\left\{t>0\ |\ \text{d}(x,\gamma_{_{\xi}}(t))\ =\ t\right\}  
\end{equation}

When $t_{_C}(\xi)<\infty$, the injectivity radius is the least value of $t$ such that $\gamma_{_{\xi}}|_{_{[0,t]}}$ is minimal. The injectivity radius, inj$_{_{\mathcal{M}}}(x)$, is the largest $r$ such that Exp$_{_x}$ is a  diffeomorphism of the open ball of radius $r$ in $T_{_x}\mathcal{M}$ onto its image  

\begin{equation}    
\begin{aligned}
\text{inj}_{_{\mathcal{M}}}(x)\ &\overset{def.}{=}\ \text{d}(x, Cx)\\    
&\ =\ \text{inf}\left\{t_{_C}(\xi)|\xi\ \in\ U_{_x}\mathcal{M}\right\}  
\end{aligned}  
\end{equation}

Together with the systole function introduced in Section \ref{sec:1}, the injectivity radius is a metric invariant. From its definition, it provides a quantitative way of estimating the accuracy with which a tangent space to a given manifold can approximate the manifold itself. The intermediate range in between these two invariants is referred to as the generalised systole function, sys$_{_p}$, which is essentially a pointwise version of $f_{_{\text{sys}}}$  

\begin{definition}   
\begin{equation}    
\text{sys}_{_p}\ \overset{def.}{=}\ \underset{\gamma\in\pi_{_1}(S,p)}{\text{inf}}\ l_{_{\gamma}}  
\end{equation}  
namely the infimum of geodesic loops based at $p$.  

\end{definition}  

For the purpose of our work, the definition of a localised systole function will turn out being crucial in the context of GW-invariant calculations for complete intersections in projective spaces, as we will be explaining more in depth in Section \ref{sec:6}. 

\subsection{Comments on topological recursion} \label{sec:3}

In Case 1, ev$^{^*}(\alpha_{_i})$ is independent w.r.t. $h$, implying topological recursion still works, up to a combinatorial factor in front. However, when $\alpha_{_i}=\alpha_{_i}(h)$, topological recursion needs to be constrained.

\medskip  

\medskip  

\begin{proposition}  Topological recursion already includes nodal invariants, only that they are being summed over and accounted for combinatorially because they all provide a plausible pair of pants subtraction to calculate $V_{_{g,n}}$ recursively. For the case of nodal curves, topological recursion needs to be applied to each node separately.  
\end{proposition}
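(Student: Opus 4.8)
The plan is to prove the proposition in two stages, matching the dichotomy between \textbf{Case 1} and \textbf{Case 2} established in the preceding discussion. First I would recall the structure of Mirzakhani-type topological recursion for the volumes $V_{_{g,n}}$: the recursion isolates the embedded pair of pants adjacent to a distinguished marked point and expresses $V_{_{g,n}}$ as a sum, over the topological types of the surface obtained by cutting along the two remaining boundary geodesics of that pair of pants, of products of volumes of lower complexity (a connected $V_{_{g-1,n+1}}$ term, or a disconnected $V_{_{g_1,n_1+1}}\cdot V_{_{g_2,n_2+1}}$ term). The decisive observation is that each such cut is performed along a simple closed geodesic, which is precisely the geometric datum of a node in the stable-curve compactification $\overline{\mathcal{M}}_{_{g,n}}$. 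This reframes the recursion as a sum indexed by the admissible pants subtractions, each of which pinches exactly one simple closed curve.

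Next I would establish the identification between nodes and pants subtractions. Every boundary stratum of $\overline{\mathcal{M}}_{_{g,n}}$ is indexed by a stable graph whose edges correspond to nodes, and each node is the pinch of a simple closed curve. Under the recursion, cutting along a pants boundary realises exactly one such pinching, so there is a correspondence between the admissible cuts appearing in the recursive step and the nodes of the generic curve in the adjacent boundary stratum. I would show this correspondence is a bijection up to automorphisms, and that the combinatorial weight carried by the recursion (the symmetry and $1/|\mathrm{Aut}|$ factors) coincides with the multiplicity with which that nodal stratum is swept out. Since in \textbf{Case 1} the insertion $\mathrm{ev}_{_i}^{^*}(\alpha_{_i})$ is independent of the internal edge label $h$, the contribution of each node is invariant under permutations of the nodes, and these contributions collapse into a single combinatorial factor in front. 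This yields the first assertion: the nodal invariants are already present in the recursion, merely summed over and accounted for combinatorially, each furnishing a valid pair of pants subtraction.

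For the second assertion I would invoke \textbf{Case 2}. When $\mathrm{ev}_{_i}^{^*}(\alpha_{_i})=\mathrm{ev}_{_i}^{^*}(\alpha_{_i}(h))$ genuinely depends on $h$, the datum attached to a node is no longer invariant under the permutations of nodes that the combinatorial sum averages over. Consequently the contributions of distinct nodes cease to be interchangeable, the symmetry that justified collecting them into one factor fails, and the recursive pants subtraction must be carried out once per node, with $\alpha_{_i}(h)$ evaluated on that specific edge. I would make this quantitative by comparing the two integrals displayed for Case 1 and Case 2 and observing that their difference is governed entirely by the non-triviality of the map $h\mapsto\alpha_{_i}(h)$; in the limit where this map is constant one recovers Case 1, consistent with the earlier remark that Case 1 is the limiting case of Case 2.

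The main obstacle I anticipate is making the bijection between nodes and admissible pants cuts fully precise while tracking the combinatorial multiplicities, since a single nodal stratum can be reached by several distinct pants subtractions and, conversely, one cut may correspond to non-isomorphic stable graphs. Controlling these multiplicities is exactly what guarantees that in Case 1 nothing is over- or under-counted, and it is precisely this bookkeeping that breaks down once the insertions acquire the edge dependence $\alpha_{_i}(h)$, thereby forcing the node-by-node treatment asserted in the proposition.
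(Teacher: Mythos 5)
Your proposal follows the same underlying route as the paper, but the comparison is lopsided: the paper's entire proof of this proposition is the citation ``See Appendix B and [Mir]'', i.e.\ it simply points at Mirzakhani's recursion \eqref{eq:McM} and the WP volume polynomial theorem without supplying any argument. What you have written is therefore not a different proof so much as the argument the paper leaves implicit: the identification of the recursive pants subtractions with boundary strata, the bookkeeping of $1/|\mathrm{Aut}|$ factors, and the observation that the Case 1/Case 2 dichotomy is governed by whether $h \mapsto \alpha_{_i}(h)$ is constant. That elaboration is consistent with the surrounding text of Section \ref{sec:3}, which asserts exactly that in Case 1 the recursion ``still works, up to a combinatorial factor in front'' while in Case 2 it ``needs to be constrained.''

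Two points in your sketch deserve caution before you could call it a proof. First, your ``decisive observation'' that cutting along a pants boundary ``is precisely the geometric datum of a node'' conflates a simple closed geodesic of positive length (which is what Mirzakhani's recursion integrates over, via the length parameter in \eqref{eq:McM}) with a node of a stable curve, which is the length-zero degeneration of such a geodesic; the correspondence between recursive cuts and boundary strata of $\overline{\mathcal{M}}_{_{g,n}}$ holds only after taking that limit, and the multiplicity matching you propose must be carried out there. Second, the ``nodal invariants'' of \cite{ABPZ} arise from Jun Li's degeneration formula applied to the target complete intersection, with the graph $\Gamma$ recording the degeneration of $X$, not merely the combinatorics of the domain curve; your bijection is stated entirely on the domain side, so an additional step relating the target degeneration graph to the domain pants decomposition (which the paper also does not supply) would be needed to close the argument. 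Neither issue invalidates your plan, but both are gaps that the paper's one-line proof does not help you fill.
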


\begin{proof}   See Appendix \ref{sec:B}, and \cite{Mir}.  
\end{proof}

\begin{proposition} Topological recursion is constrained by primitive insertions.
\end{proposition}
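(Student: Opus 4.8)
The plan is to argue by contrast with Case~1, using the degeneracy structure of $\overline{\mathcal{M}}_{\Gamma}(X)$ and the recursive pair-of-pants subtraction established in the previous proposition. First I would recall the mechanism of standard topological recursion for the Weil--Petersson volumes $V_{g,n}$: a boundary component is capped off by a pair of pants, and the recursion kernel integrates over the lengths of the two geodesics bounding it, summing over all combinatorial types of the resulting lower-complexity (possibly disconnected) surface. The content of the previous proposition is that the nodal contributions are already present in this sum, being distributed \emph{freely} across the internal edges of the decomposition because each provides an admissible subtraction; it is precisely this freedom that I intend to show is lost once genuine primitive insertions are switched on.

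Next I would introduce the cohomological weights. In \ref{Word:case1} the evaluation class $\text{ev}_i^*(\alpha_i)$ is independent of the internal-edge label $h$, so it commutes with the kernel integration and with any reindexing of the admissible decompositions. Consequently it can be pulled out of the recursion as an overall combinatorial multiplicity, which is exactly what is meant by topological recursion working up to a combinatorial factor in front. The decompositions remain interchangeable, and the re-arrangement of punctures among the vertices of $\Gamma$ leaves the integrand unchanged.

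The crux is \ref{Word:case2}. Here $\alpha_i=\alpha_i(h)$, so the integrand $\prod_h \psi_h^{k_h}\,\text{ev}_i^*(\alpha_i(h))$ couples the primitive insertion to the specific node/edge of $\Gamma$. I would show that this coupling obstructs the free summation: because the weight now differs from one admissible pair-of-pants subtraction to the next, the decompositions cease to be interchangeable, and by the previous proposition the recursion must be applied to each node separately. The sum over decompositions is therefore restricted to those compatible with the $h$-dependence of $\alpha_i$, which is precisely the assertion that topological recursion is constrained by the primitive insertions.

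The main obstacle I anticipate is verifying that this constraint is \emph{genuine}, i.e.\ that the $h$-dependence cannot be reabsorbed into an overall factor as in \ref{Word:case1}. This reduces to showing that the assignment $h\mapsto\alpha_i(h)$ does not factor through a constant map, so that the graph $\Gamma$---carrying its genus and $\beta$-function data---distinguishes the nodes nontrivially. Equivalently, one must check that the localised systole function $\text{sys}_p$ introduced above detects the node at which each insertion sits, thereby forbidding the cross-vertex re-arrangement of punctures that was permissible in Case~1. I expect this to be the genuinely delicate point, since it requires ruling out accidental cancellations that would collapse the constrained recursion back onto the unconstrained one.
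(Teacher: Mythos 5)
Your argument is correct in spirit but takes a genuinely different route from the paper's. You work algebraically at the level of the integrand: in \ref{Word:case1} the class $\mathrm{ev}_i^*(\alpha_i)$ is independent of the internal-edge label $h$, so it factors out of the kernel integration as an overall combinatorial multiplicity, whereas in \ref{Word:case2} the coupling $\alpha_i=\alpha_i(h)$ ties each insertion to a specific node of $\Gamma$ and forbids the free reindexing of admissible decompositions. The paper instead argues geometrically on a concrete example: applying Mirzakhani's recursion \eqref{eq:McM} to the boundary $L_1$ (Figure \ref{fig:12}, taken together with the right-hand surface of Figure \ref{fig:lamination}), one observes that some of the simple closed geodesics arising in the pants decompositions would cross the simple closed curves replacing the primitive insertions, so those decompositions are excluded and only the restricted family $\{\mathcal{P}\}_{\mathcal{L}}$ survives. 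The two approaches are complementary: the crossing phenomenon is exactly what resolves the ``delicate point'' you flag at the end, since it guarantees the $h$-dependence cannot be reabsorbed into a constant prefactor --- the forbidden decompositions are ruled out for intersection-theoretic reasons, independently of any accidental cancellation among the weights. Conversely, your integrand-level analysis makes explicit why Case~1 escapes the constraint, which the paper only asserts in the surrounding discussion. Be aware, however, that without some such geometric input your argument risks circularity: asserting that $h$-dependent weights make the decompositions non-interchangeable is close to restating the proposition itself, so to close the argument you should import the crossing observation, or an equivalent statement that $\{\mathcal{P}\}_{\mathcal{L}}$ is a proper subfamily of all pants decompositions.
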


  \begin{proof}    The proof of this statement follows from some observations that can be made by looking at a concrete example. Let us jointly consider Figure \ref{fig:12} and the right-hand-side of Figure \eqref{fig:lamination}. Applying \eqref{eq:McM} to $L_{_1}$, we clearly see that some of the simple closed geodesics associated to pants decompositions, would be crossing the simple cohomology insertions replacing the primitive cohomologies. 
   \begin{figure}[ht!]    
\begin{center}
    \includegraphics[scale=0.5]{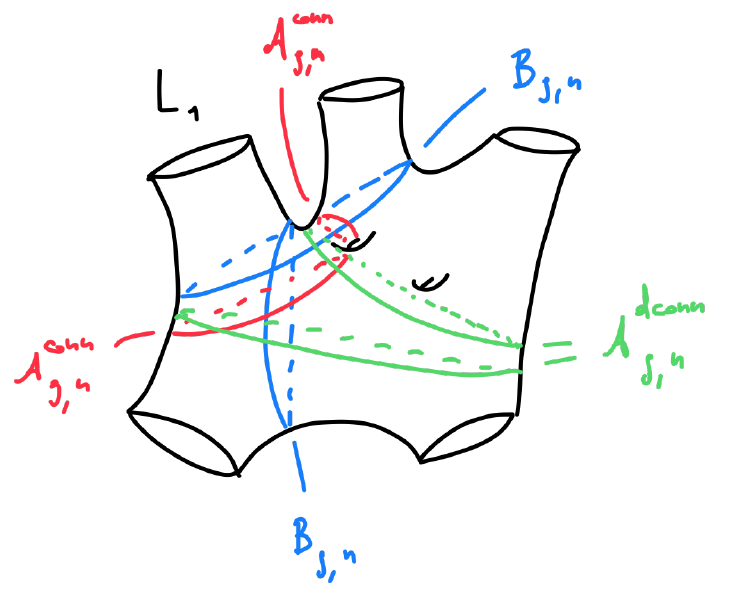}    
    \end{center}   
\caption{\small This picture shows the different possible pants decompositions with respect to the reference closed geodesic $L_{_1}$. Notation used here matches that of Mirzakhani, \cite{Mir}.}
\label{fig:12}  
\end{figure} 
\end{proof}

 In what follows, we will be using the following terminology:\emph{preferred lamination}, $\mathcal{L},$ by which we mean the union of simple closed curves replacing the primitive cohomology insertions. To the best of our knowledge, such terminology has not appeared in the literature before, and we thereby expect it not to lead to any overlap with other settings. To such lamination, $\mathcal{L}$, we can associate a family of admissible pair of pants decompositions, $\{\mathcal{P}\}_{_{\mathcal{L}}}$, that keeps ${\cal L}$ fixed. Of course, this implies limiting the number of possible decompositions that one could use for describing topological recursion in the framework of \cite{Mir}, which thereby leads us to formulating the following statement.

\begin{lemma}  The number of constituents of $\{\mathcal{P}\}_{_{\mathcal{L}}}$ increases with the genus of $\mathcal{M}_{_{g,n}}$ and decreases with the number of primitive insertions.   
\end{lemma}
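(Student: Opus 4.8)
The plan is to translate the counting of admissible pants decompositions into a purely combinatorial count of trivalent graphs and then to deduce the two monotonicity statements as growth and submultiplicativity properties of that count. First I would fix notation: a pair of pants decomposition of a genus-$g$ surface with $n$ punctures consists of exactly $3g-3+n$ disjoint essential simple closed curves cutting it into $2g-2+n$ pairs of pants, and --- as already invoked in Section~\ref{sec:1} when counting the critical points of $f_{_{\text{sys}}}$ --- the topological types of such decompositions are finite in number; I denote this number by $P(g,n)$. Dually, each type corresponds to a connected trivalent graph with $2g-2+n$ vertices, $3g-3+n$ internal edges and $n$ legs, so that $P(g,n)$ counts exactly these graphs.

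Next I would identify $\{\mathcal{P}\}_{_{\mathcal{L}}}$ combinatorially. By the correspondence recorded just before \ref{Word:case1} between primitive insertions and the vertices and edges of $\Gamma$, the preferred lamination $\mathcal{L}$ consists of $p$ disjoint simple closed curves, one per insertion; an admissible decomposition is precisely one whose curve system contains $\mathcal{L}$, i.e.\ one in which those $p$ curves are pre-assigned. Cutting $\mathcal{M}_{_{g,n}}$ along $\mathcal{L}$ yields a surface $\Sigma' = \mathcal{M}_{_{g,n}}\setminus\mathcal{L}$ with connected components $\Sigma'_{_1},\dots,\Sigma'_{_m}$ carrying all the remaining freedom, and since a completion is chosen independently on each component,
\[
\left|\{\mathcal{P}\}_{_{\mathcal{L}}}\right|\ =\ \prod_{_{j=1}}^{^m} P\!\left(g_{_j},n_{_j}\right),
\]
where $(g_{_j},n_{_j})$ is the type of $\Sigma'_{_j}$ (boundary circles created by the cut counting as punctures). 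This reduces both assertions to monotonicity of $P$.

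For the genus dependence I would show that $P(g,n)$ is strictly increasing in $g$: raising the genus by one (with $n$ and $p$ held fixed) increases the complexity $3g-3+n$ by three, so at least one component $\Sigma'_{_j}$ acquires strictly higher complexity, and the number of trivalent completions of a surface strictly increases with complexity (every decomposition of the smaller surface embeds into one of the larger after handle attachment, while new decompositions crossing the new handle also appear). Hence the product grows. For the dependence on the number of primitive insertions I would use \emph{strict submultiplicativity under cutting}: adding one insertion adds one curve $c$ to $\mathcal{L}$, which lies inside some component $\Sigma'_{_j}$ and cuts it into $\Sigma'_{_j}\setminus c$. Fixing $c$ as a decomposition curve gives a bijection between completions of $\Sigma'_{_j}$ containing $c$ and decompositions of $\Sigma'_{_j}\setminus c$, so $P(\Sigma'_{_j}\setminus c)$ equals the number of decomposition types of $\Sigma'_{_j}$ that contain $c$, which is at most $P(\Sigma'_{_j})$; the inequality is strict whenever $\Sigma'_{_j}$ admits a decomposition type not containing $c$. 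Thus the product strictly decreases.

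The main obstacle I anticipate is precisely the strict inequality in the cutting step: one must verify that the set of decomposition types of $\Sigma'_{_j}$ \emph{incompatible} with the newly added curve $c$ (those whose pants curves necessarily meet $c$ essentially) is nonempty, so that fixing $c$ genuinely discards decompositions rather than merely re-indexing them. I would dispatch this by a direct construction --- exhibiting, for any essential curve $c$ in a surface of complexity at least two, a pants system meeting $c$ transversally and essentially --- together with the finitely many small-complexity base cases checked by hand. This is exactly the point where the hypothesis that $\mathcal{M}_{_{g,n}}$ has enough complexity, guaranteed here by the presence of nontrivial primitive cohomology, is used.
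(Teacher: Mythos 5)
Your proposal is correct and follows essentially the same route as the paper, which likewise argues that the family $\{\mathcal{P}\}_{_{\mathcal{L}}}$ is obtained by counting topological types of pants decompositions (rearranged by A- and S-moves, i.e.\ your dual trivalent graphs), that this count grows with $-\chi(\mathcal{M}_{_{g,n}})=2g-2+n$, and that fixing the curves of $\mathcal{L}$ forbids some moves and so shrinks the count. Your version is a more careful elaboration of that one-paragraph argument --- in particular the product formula over components of the cut surface and the explicit verification of strictness (exhibiting a decomposition type incompatible with the added curve) are points the paper asserts without proof --- but the underlying idea is the same.
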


\begin{proof}   The number of primitive insertions increases the number of fixed simple geodesics in $\mathcal{P}$, thereby reducing the combinatorial rearrangement of the simple geodesics in a given pair of pants decomposition. 
\end{proof}

\begin{corollary} The number of constituents of $\{\mathcal{P}\}_{_{\mathcal{L}}}$ corresponds to the number of different possible sets of generators of MCG$_{_{g,n}}$.
    
\end{corollary}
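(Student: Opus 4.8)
The plan is to exhibit an explicit bijection between the admissible pair of pants decompositions collected in $\{\mathcal{P}\}_{_{\mathcal{L}}}$ and the distinct generating systems of $\text{MCG}_{_{g,n}}$ that are compatible with the preferred lamination $\mathcal{L}$. The starting point is the Dehn--Lickorish generation theorem, which guarantees that $\text{MCG}_{_{g,n}}$ is generated by Dehn twists about a finite family of simple closed curves. The key structural fact I would invoke is that any such generating family can be organised into a complete marking, namely a pair of pants decomposition $\mathcal{P}$ together with a system of curves transverse to its cuffs. In this language a generating set of $\text{MCG}_{_{g,n}}$ is nothing but the set of Dehn twists read off from a marking, so counting generating sets reduces to counting markings, and hence — once the transverse data is fixed by convention — to counting the underlying pants decompositions.

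First I would make precise the constraint imposed by $\mathcal{L}$. Since $\mathcal{L}$ is by construction the union of the simple closed curves replacing the primitive cohomology insertions, an admissible decomposition $\mathcal{P}\in\{\mathcal{P}\}_{_{\mathcal{L}}}$ is one whose curve system contains the curves of $\mathcal{L}$ as a sub-multicurve, i.e. one that keeps $\mathcal{L}$ fixed. This is exactly the situation analysed in the Proposition on the Thurston spine, where $\mathcal{P}_{_{g,n}}$ is built from two filling multicurves, one of which may be taken to be the lamination giving a pants decomposition. Consequently the Dehn twists along the curves of $\mathcal{L}$ sit inside \emph{every} admissible generating set, and the remaining freedom in choosing such a set is precisely the freedom in completing $\mathcal{L}$ to a full pants decomposition, which is measured by the cardinality of $\{\mathcal{P}\}_{_{\mathcal{L}}}$.

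Next I would establish the correspondence in both directions. For surjectivity, given any generating set of $\text{MCG}_{_{g,n}}$ respecting the fixed curves of $\mathcal{L}$, the curves supporting its Dehn twists determine a marking and hence an element of $\{\mathcal{P}\}_{_{\mathcal{L}}}$; for injectivity, two decompositions inducing the same generating set of twists must share the same cuff system, since distinct multicurves produce Dehn twists with distinct invariant laminations and therefore distinct mapping classes. Combined with the preceding Lemma — whose monotonicity in the genus and in the number of primitive insertions matches that of the number of generating systems — this would yield the asserted numerical correspondence, and in the saturated case recover the identification of $\text{MCG}_{_{g,n}}$ with the data of a single preferred filling lamination.

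The main obstacle I expect is the injectivity half of the bijection: one must rule out the possibility that two genuinely distinct admissible pants decompositions give rise to the same generating set of $\text{MCG}_{_{g,n}}$, which requires a careful argument that Dehn twists along an incompatible pair of multicurves cannot coincide as group elements. Here the non-separability and distortion phenomena emphasised earlier in the work enter, since they govern precisely when subgroups generated by twists about a fixed sub-multicurve can be identified inside $\text{MCG}_{_{g,n}}$. Treating this cleanly, rather than resting on the monotone count alone supplied by the previous Lemma, is where the genuine difficulty lies.
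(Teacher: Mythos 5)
The paper's own ``proof'' of this corollary is a single sentence deferring to ``techniques introduced in Section \ref{sec:5}'', so there is no recorded argument for you to match; your proposal is an attempt to supply one, and it is worth saying plainly that it does not yet close.

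The central gap is the alleged bijection between elements of $\{\mathcal{P}\}_{_{\mathcal{L}}}$ and generating sets of $\text{MCG}_{_{g,n}}$. The Dehn twists about the $3g-3+n$ cuffs of a single pair of pants decomposition pairwise commute and generate only a free abelian subgroup; they never generate $\text{MCG}_{_{g,n}}$. You acknowledge this by passing to complete markings (cuffs plus transverse curves), but the phrase ``once the transverse data is fixed by convention'' conceals exactly the point at issue: distinct choices of transverse system over the same pants decomposition yield genuinely distinct generating sets, so the count of generating sets is the count of markings, not of decompositions, unless you prove that the transverse data can be chosen canonically (and compatibly with keeping $\mathcal{L}$ fixed). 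No such canonical choice is exhibited, and without it the asserted equality of cardinalities does not follow. Your surjectivity direction has a second problem: generating sets of $\text{MCG}_{_{g,n}}$ need not consist of Dehn twists at all (the group is generated by two elements, and also by torsion elements), so ``the number of different possible sets of generators'' is not even finite until one restricts to a specific combinatorial class of generating sets; neither the statement, nor the paper, nor your proposal pins that class down. Your injectivity argument (distinct multicurves give distinct twists) is the one sound piece, but it is the easy half. To repair the proof you would need to (i) fix precisely which generating sets are being counted, presumably Humphries/Lickorish-type systems subordinate to a marking containing $\mathcal{L}$, and (ii) show that the map from such systems to their underlying cuff systems is a bijection onto $\{\mathcal{P}\}_{_{\mathcal{L}}}$, which requires the canonical-transversal claim you currently assume.
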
 

\begin{proof}  This can be proved by means of techniques introduced in Section \ref{sec:5}.
\end{proof}

\section{\textbf{Geodesic laminations in groups}}    \label{sec:5}

\begin{definition}A geodesic lamination, $\lambda$, is a set of disjoint union of simple complete geodesics which form a closed subset of $S$, where each geodesic is a \emph{leaf}  
\begin{equation}  
\mathcal{G}\ \subset\ \mathbb{H}^{^2}  \nonumber
\end{equation} 
$\lambda$ is a \emph{minimal} geodesic lamination, $\mathcal{MGL}$, if $\lambda$ contains no proper sub-laminations, namely iff every leaf is dense. A lamination is \emph{filling} if it decomposes $S$ into contractible or 1-punctured surfaces. A filling lamination is said to be \emph{maximal} if it decomposes $S$ into triangles.  

\end{definition}  

\begin{proposition}  Laminations labelling families of pair of pants decompositions, $\{\mathcal{P}\}_{_{\mathcal{L}}}$, can be completed to a filling lamination. The MCG containing the original lamination .
\end{proposition}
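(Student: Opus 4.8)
The plan is to exhibit an explicit completion of $\mathcal{L}$ and then verify that the resulting object satisfies the geodesic-lamination axioms of the opening Definition of this Section, together with the filling condition. First I would fix a representative pair of pants decomposition $\mathcal{P}\in\{\mathcal{P}\}_{_{\mathcal{L}}}$ with $\mathcal{L}\subseteq\mathcal{P}$, realised geodesically at a chosen point of $\mathcal{T}_{_{g,n}}$; since $\mathcal{L}$ is by construction a union of finitely many disjoint simple closed geodesics, so is $\mathcal{P}$, and its complement $S\setminus\mathcal{P}$ is a disjoint union of $2g-2+n$ pairs of pants. The key observation is that a pants decomposition is manifestly \emph{not} filling, because each complementary piece is a pair of pants rather than a contractible or $1$-punctured region; the content of the statement is therefore precisely that the missing leaves can be supplied without destroying the lamination structure.

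Next I would complete $\mathcal{P}$ inside each complementary pair of pants. In a hyperbolic pair of pants one adjoins finitely many bi-infinite geodesic leaves that spiral onto the three boundary cuffs, cutting the pair of pants into two ideal triangles; performing this in every complementary piece and taking the union $\lambda$ with the cuff geodesics yields a candidate whose complementary regions are all ideal triangles. By the opening Definition such a $\lambda$ is \emph{maximal}, hence \emph{filling}, and since $\mathcal{L}\subseteq\mathcal{P}\subseteq\lambda$ it completes $\mathcal{L}$ in the required sense. An equivalent, less constructive route is to partially order by inclusion the set of geodesic laminations containing $\mathcal{L}$ and invoke Zorn's Lemma: the closure of the union along any chain is again a geodesic lamination, so a maximal element exists and is filling.

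The main obstacle, in either route, is verifying \emph{closedness} of the completed set of leaves, since the spiraling leaves accumulate onto the boundary cuffs of each pair of pants; I would handle this by checking that the closure of the union of the cuff geodesics and the finitely many spiraling leaves adds no new leaf and preserves disjointness and simplicity, which is exactly where the spiraling (rather than any asymptotic crossing) configuration is essential. Once closedness is established the filling property is immediate from the ideal-triangle complementary regions. Finally, to match the second assertion, I would record that the filling lamination $\lambda$ so produced is of the type invoked in the Proposition of Section~\ref{sec:1} identifying the Thurston spine from two filling multicurves: one of those multicurves is taken to be the support of $\mathcal{L}$, so that the subgroup of $\text{MCG}_{_{g,n}}$ stabilising $\lambda$ retains the original lamination data, which is what the claim asserts.
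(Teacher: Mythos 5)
Your proof is essentially correct, but it takes a genuinely different route from the one in the paper. The paper's own proof is a brief appeal to the stabiliser of $\text{MCG}_{_{g,n}}$ and its derivation from monodromy, deferring the details to the analysis of Section \ref{sec:6}; it situates the completion inside the group-theoretic framework of the map \eqref{eq:monbeg}. You instead give a direct hyperbolic-geometric construction: realise $\mathcal{L}$ inside a pants decomposition $\mathcal{P}$, adjoin spiraling bi-infinite leaves cutting each complementary pair of pants into ideal triangles, and check closedness of the resulting leaf set so that the union is a genuine geodesic lamination, maximal and hence filling in the sense of the opening Definition of the Section. This is the standard completion argument from surface topology, and it has the advantage of being self-contained and verifiable against the stated definitions, whereas the paper's route buys consistency with its overarching monodromy/stabiliser narrative at the cost of postponing the actual argument. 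Your Zorn's Lemma alternative is fine as a non-constructive backup, though the explicit ideal-triangulation of each pair of pants already suffices and is preferable since it makes the finiteness of the added leaves manifest. One caveat: the second assertion of the Proposition is grammatically incomplete in the source (``The MCG containing the original lamination .''), so your reading of it via the earlier Proposition on building the Thurston spine from two filling multicurves is a reasonable reconstruction but necessarily an interpretation rather than a verification; the paper itself offers no argument for that clause either.
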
 
\begin{proof}       Using the stabiliser of MCG$_{_{g,n}}$ and its derivation from monodromy. A more detailed explanation of this follows from the analysis of Section \ref{sec:6}.
\end{proof}  

\begin{proposition} The generators of MCG$_{_{g,n}}$ are dimensionally related to the Thurston spine.
\end{proposition}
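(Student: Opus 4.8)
The plan is to route the statement through the correspondence, established in the preceding Corollary, between sets of generators of $\text{MCG}_{_{g,n}}$ and the constituents of the family $\{\mathcal{P}\}_{_{\mathcal{L}}}$ of admissible pair of pants decompositions, and then to feed this into the cell structure of the Thurston spine. First I would recall that $\text{MCG}_{_{g,n}}$ is generated by Dehn twists along a finite \emph{filling} collection of simple closed curves; each such generator is indexed by a leaf of a lamination $\mathcal{L}$, and by the preceding Proposition the laminations labelling $\{\mathcal{P}\}_{_{\mathcal{L}}}$ can be completed to a filling lamination. By the Proposition identifying the determination of the MCG with a preferred filling lamination, these completed laminations are precisely the data building $\mathcal{P}_{_{g,n}}$, so every minimal generating set is realised by leaves assembling into cells of the spine.

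Next I would use that the cells of $\mathcal{P}_{_{g,n}}$ are indexed by filling sub-collections of systoles, with the top-dimensional cells corresponding to minimal filling systems, so that the generating curves are exactly the leaves that assemble into these cells. Feeding this into the (punctured version of the) identity \eqref{eq:thusrt},
\begin{equation}
\text{dim}\ \mathcal{P}_{_{g,n}}\ =\ \text{VCohdim}\left(\text{MCG}_{_{g,n}}\right),
\nonumber
\end{equation}
one concludes that the maximal rank of an independent filling configuration of generating curves equals the dimension of the spine. Since each generator contributes one leaf to such a configuration, the combinatorics of a minimal filling generating set controls $\text{dim}\ \mathcal{P}_{_{g,n}}$, which is the asserted dimensional relation.

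The main obstacle will be reconciling the naive generator count with the spine dimension: a pair of pants decomposition contains $3g-3+n$ curves, whereas $\text{VCohdim}\left(\text{MCG}_{_{g,n}}\right)$ is in general strictly larger, so the relation cannot be a literal equality of cardinalities taken over a single decomposition. I would resolve this by emphasising that the relevant quantity is not the number of curves in one decomposition but the maximal rank of a filling configuration realised across the whole family $\{\mathcal{P}\}_{_{\mathcal{L}}}$ after completion to a maximal (triangulating) lamination; this completion supplies the extra leaves needed to match $\text{VCohdim}$. The delicate point is to verify that the Dehn twists along the completed filling system still generate $\text{MCG}_{_{g,n}}$ and that their filling configurations exhaust the top cells of $\mathcal{P}_{_{g,n}}$, so that the dimensional relation is pinned to the correct constant $\text{VCohdim}\left(\text{MCG}_{_{g,n}}\right)$ rather than merely to the pants count $3g-3+n$.
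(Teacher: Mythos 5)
Your argument terminates at the same identity the paper relies on, namely $\dim \mathcal{P}_{_{g,n}} = \text{VCohdim}\left(\text{MCG}_{_{g,n}}\right)$ from \eqref{eq:thusrt}, but the paper reaches the proposition by a much shorter route: its proof simply points to the concluding observations of Section \ref{sec:1}, where the stabiliser $\text{Stab}_{_{\text{MCG}_{_{g,n}}}}\left(\ell_{_{\mathcal{M}}}^{^{[F]}}\right)$ is identified with the deformation retraction $\Phi:\ \mathcal{T}_{_{g,n}}\ \longrightarrow\ \mathcal{P}_{_{g,n}}$ and one has the chain $\text{VCohdim}\left(\text{MCG}_{_{g,n}}\right) = \text{VCohdim}\left(\text{Stab}_{_{\text{MCG}_{_{g,n}}}}\left(\ell_{_{\mathcal{M}}}^{^{[F]}}\right)\right) = \dim\mathcal{P}_{_{g,n}}$. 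The ``dimensional relation'' intended there is exactly that chain, mediated by the stabiliser; no cell-by-cell analysis of the spine is invoked.

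The genuine gap in your version is the middle step. You assert that the cells of $\mathcal{P}_{_{g,n}}$ are indexed by filling sub-collections of systoles, that the top-dimensional cells correspond to minimal filling systems, and that ``the maximal rank of an independent filling configuration of generating curves equals the dimension of the spine.'' None of this is established in the paper, and none of it is a known fact you may freely cite: the paper only records that $\mathcal{P}_{_{g,n}}$ is a CW-complex containing the critical points of $f_{_{\text{sys}}}$, and the precise local structure and dimension of the Thurston spine is exactly the delicate content of \cite{Th} and \cite{Irm} that the identity \eqref{eq:thusrt} is meant to encapsulate, not a consequence of a generator count. The obstacle you flag yourself --- that a pants decomposition carries only $3g-3+n$ curves while $\text{VCohdim}\left(\text{MCG}_{_{g,n}}\right)$ is in general larger --- is real, but your proposed fix does not close it: completing $\{\mathcal{P}\}_{_{\mathcal{L}}}$ to a maximal (triangulating) lamination generally produces leaves that are not simple closed curves, so they carry no Dehn twists and cannot serve as the ``extra generators,'' and nothing in the argument pins the number of added leaves to $\text{VCohdim}\left(\text{MCG}_{_{g,n}}\right)$. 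As written, the dimension count you need is assumed rather than derived; either prove the cell-indexing claim, or do as the paper does and take \eqref{eq:thusrt} together with the stabiliser identification as the dimensional bridge, dropping the intermediate claims about top cells.
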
   
\begin{proof} This statement sheds light on the role played by the stabiliser of $MCG_{_{g,n}}$, and readily follows from the observations featuring in the concluding part of Section \ref{sec:1}. 
\end{proof}

\begin{lemma}  Gromov-Witten invariants of complete intersections in projective space whose symplectic form depends on primitive cohomological insertions are labelled by laminations.   
\end{lemma}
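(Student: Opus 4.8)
The plan is to start from the Case 2 integral displayed above, in which the insertions acquire an explicit edge dependence $\alpha_i = \alpha_i(h)$, and to show that the data determining this integral is equivalent to the data of the preferred lamination $\mathcal{L}$. The starting observation is the degeneracy formula of \cite{ABPZ}: the primitive cohomology insertions are traded for the graph $\Gamma$, whose number of edges and vertices equals the number of primitive insertions. Since $\mathcal{L}$ was defined precisely as the union of simple closed curves replacing the primitive cohomology insertions, there is a one-to-one correspondence between the leaves of $\mathcal{L}$ and the primitive insertions, hence between the combinatorial type of $\Gamma$ and the isotopy class of $\mathcal{L}$ on the fibre surface.

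First I would make this correspondence precise: each primitive insertion, equivalently each edge or vertex of $\Gamma$, is realised as a leaf of $\mathcal{L}$, so that recovering $\Gamma$ from $\mathcal{L}$ amounts to reading off which leaves are separating and how the complementary pieces are glued, which in turn reproduces the genus and $\beta$-function decorations on the vertices of $\Gamma$. Next I would invoke the Proposition asserting that $\mathcal{L}$ completes to a filling lamination, together with its associated family $\{\mathcal{P}\}_{\mathcal{L}}$ of admissible pair of pants decompositions keeping $\mathcal{L}$ fixed. The key point is that the dependence $\alpha_i=\alpha_i(h)$ forces the constrained topological recursion established earlier, in which each node is treated separately, so that the only decompositions entering the recursive computation of the invariant are exactly those in $\{\mathcal{P}\}_{\mathcal{L}}$. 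Thus the lamination both selects the admissible decompositions and, through the graph sum of \cite{ABPZ}, fixes the integrand.

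The final step is to read this as a labelling. Because the invariant is computed by constrained topological recursion over $\{\mathcal{P}\}_{\mathcal{L}}$, and because every such decomposition keeps $\mathcal{L}$ fixed, the value of the invariant is a function of $\mathcal{L}$ alone; conversely, distinct Case 2 configurations with distinct edge dependences give rise to distinct isotopy classes of $\mathcal{L}$. This is where Case 1 reappears as the saturation limit, in which the edge dependence degenerates, $\mathcal{L}$ reduces to a pants decomposition of $\mathcal{M}_{g,n}$, and the unconstrained recursion of \cite{Mir} is recovered.

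The hard part will be verifying that the assignment is well-defined, i.e. that the invariant depends only on the isotopy class of $\mathcal{L}$ and not on the particular admissible decomposition chosen within $\{\mathcal{P}\}_{\mathcal{L}}$. This requires showing that the constrained recursion is invariant under the subgroup of $\text{MCG}_{g,n}$ permuting the decompositions in $\{\mathcal{P}\}_{\mathcal{L}}$ while fixing $\mathcal{L}$, which I would deduce from the fact that this subgroup lies in $\text{Stab}_{\text{MCG}_{g,n}}(\ell_{\mathcal{M}}^{[F]})$ and hence preserves the tangential structure entering the Gromov-Witten data. A secondary obstacle is ensuring that the completion to a filling lamination is canonical enough that distinct primitive configurations are not accidentally identified; I would control this by fixing, once and for all, a completion compatible with the vertex decorations of $\Gamma$.
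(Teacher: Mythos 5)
Your proposal follows essentially the same route as the paper's own (one-sentence) proof, whose entire content is that the lamination labelling the invariants can be recast as the family of admissible pair of pants decompositions $\{\mathcal{P}\}_{\mathcal{L}}$; your central step — primitive insertions $\leftrightarrow$ leaves of $\mathcal{L}$ $\leftrightarrow$ the constrained family $\{\mathcal{P}\}_{\mathcal{L}}$ entering the recursion — is exactly this identification, just spelled out. You go further than the paper in flagging the well-definedness of the labelling (independence of the choice within $\{\mathcal{P}\}_{\mathcal{L}}$ and canonicity of the filling completion), which the paper leaves entirely implicit.
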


\begin{proof}     The lamination labelling GW-invariants in the setup specified in the statement, can be recast to that of the pants decomposition, $\{\mathcal{P}\}_{_{\mathcal{L}}}$.
\end{proof}

\begin{corollary}   Gromov-Witten invariants are labelled by families/classes of laminations.  

\end{corollary}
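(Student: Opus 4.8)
The plan is to bootstrap the preceding Lemma, which attaches to each Gromov-Witten invariant (in the regime where the symplectic form depends on the primitive insertions) a single preferred lamination $\mathcal{L}$, and to promote this pointwise assignment to one that is constant along a natural equivalence relation, so that it descends to a labelling by classes. First I would recall that the Lemma produces not merely a leaf-set $\mathcal{L}$ but, via the construction of Section \ref{sec:3}, its associated family of admissible pair of pants decompositions $\{\mathcal{P}\}_{_{\mathcal{L}}}$, i.e.\ the decompositions keeping $\mathcal{L}$ fixed. The key observation is that the invariant is computed through the constrained topological recursion by summing over the \emph{whole} family $\{\mathcal{P}\}_{_{\mathcal{L}}}$ rather than over any single decomposition; consequently the numerical value depends only on the family and not on the particular representative within it.

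Next I would make the equivalence relation precise: two laminations are declared equivalent when they induce the same family $\{\mathcal{P}\}_{_{\mathcal{L}}}$, equivalently when they lie in a common orbit of $\text{Stab}_{_{\text{MCG}_{_{g,n}}}}(\ell_{_{\mathcal{M}}}^{^{[F]}})$ acting on filling laminations. Here I would invoke the earlier Proposition that every family-labelling lamination can be completed to a filling lamination, so that the classes are well posed as mapping-class-group orbits, together with the Corollary of Section \ref{sec:3} identifying the number of constituents of $\{\mathcal{P}\}_{_{\mathcal{L}}}$ with the number of admissible sets of generators of MCG$_{_{g,n}}$. Since Gromov-Witten invariants are invariant under the action of the mapping class group, the label is constant on each orbit, and the assignment sending a GW-invariant to its class of laminations is well defined.

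Finally I would assemble the two halves: the Lemma furnishes the forward assignment to a lamination, while the MCG-invariance of the invariant together with the topological-recursion argument shows that this assignment only detects the class, whence the invariants are genuinely indexed by families/classes of laminations rather than by individual leaves. The hard part will be verifying that the equivalence relation is neither too coarse nor too fine: specifically, that two laminations yielding the same $\{\mathcal{P}\}_{_{\mathcal{L}}}$ really do produce equal invariants (so the map descends) while distinct families remain distinguishable (so the labelling is faithful). Controlling this amounts to tracking how the stabiliser acts on the set of admissible decompositions and checking that the constrained recursion is insensitive precisely to within-class rearrangements, which is exactly where the dependence of the symplectic form on the primitive insertions must be used with care.
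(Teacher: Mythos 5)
Your proposal is consistent with the paper's intent but takes a noticeably more elaborate route than the proof actually given there. The paper disposes of this corollary in one sentence: laminations intrinsically feature in GW-invariant calculations in the form of pair of pants decompositions, and those decompositions are precisely what define the Weil--Petersson volume polynomials of Appendix \ref{sec:B}, whose coefficients are the GW-invariants (Mirzakhani's recursion). In other words, the paper only needs the forward observation that every GW computation already carries a family of pants decompositions, hence a family of laminations, as a label. You instead bootstrap the preceding Lemma, introduce an explicit equivalence relation (same family $\{\mathcal{P}\}_{_{\mathcal{L}}}$, equivalently a common stabiliser orbit), and then argue descent via MCG-invariance of the invariant. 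That extra structure is not wrong, and it buys something the paper does not attempt: a candidate for \emph{which} equivalence relation the word ``classes'' in the statement refers to. But be aware of two points. First, the ``hard part'' you flag --- faithfulness of the labelling, i.e.\ that distinct families give distinct invariants --- is not required by the statement (``labelled by'' need not mean injectively) and is not claimed or proved anywhere in the paper; you should either drop it or state it as a separate conjecture rather than as an obligation of this proof. Second, your descent argument relies on the preceding Lemma, which in the paper is itself only established for the regime where the symplectic form depends on the primitive insertions, whereas the corollary as stated is unconditional; the paper's route through the Weil--Petersson volume form avoids this restriction, so if you want your argument to cover the general case you need to supplement it with the same appeal to \cite{Mir} that the paper makes.
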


\begin{proof} The proof of this statement simply follows from observing that laminations intrinsically feature in calculations of GW-invariants in the form of pair of pants decompositions, with the latter leading to the definition of the Weil-Petersson volume form, \cite{Mir}, \ref{sec:B}.
\end{proof}

\begin{theorem}  The preferred lamination, $\mathcal{L}$, replacing primitive cohomology insertions is not a critical point of the systole function, $f_{_{\text{sys}}}$.  
\end{theorem}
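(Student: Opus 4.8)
The plan is to reduce the statement to the filling criterion for critical points of the systole function and then invoke Thurston's Proposition on non-filling collections of curves. First I would recall the characterisation of critical points: a point $x\in\mathcal{T}_{_g}$ is a critical point of $f_{_{\text{sys}}}$ precisely when the systoles realised at $x$ fill the surface, i.e.\ when the complement of their union consists only of contractible (or once-punctured) pieces. This is exactly the content encoded by the Thurston spine $\mathcal{P}_{_g}$ together with the remark that all critical points of $f_{_{\text{sys}}}$ are contained in $\mathcal{P}_{_g}$; it is the filling (eutactic) condition that pins a point of Teichm\"uller space down rigidly, leaving no tangent direction along which every systole can be simultaneously lengthened.

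Next I would establish that the preferred lamination $\mathcal{L}$ does not fill. This is immediate from the earlier Proposition stating that the laminations labelling the family $\{\mathcal{P}\}_{_{\mathcal{L}}}$ can be completed to a filling lamination: by construction $\mathcal{L}$ is a \emph{proper} sub-lamination of a filling one, so its complement contains at least one non-contractible component. Equivalently, $\mathcal{L}$ is the union of the simple closed curves replacing the primitive insertions, and since these admit a nontrivial family of admissible pants completions $\{\mathcal{P}\}_{_{\mathcal{L}}}$ keeping $\mathcal{L}$ fixed, $\mathcal{L}$ cannot already cut the surface into polygons.

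With non-filling established, I would apply Thurston's Proposition directly with $C=\mathcal{L}$: since $\mathcal{L}$ is a collection of curves that does not fill, at every point of $\mathcal{T}_{_g}$ there exist tangent vectors that simultaneously increase the lengths of all geodesics representing curves of $\mathcal{L}$. At any configuration where the curves of $\mathcal{L}$ are realised as systoles, such a deformation strictly raises the length of every $\mathcal{L}$-systole, so the point fails the filling condition and hence is not a critical point of $f_{_{\text{sys}}}$. This yields the claim.

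The main obstacle I anticipate is making precise the identification between ``the lamination $\mathcal{L}$'' and ``a point of $\mathcal{T}_{_g}$'' at which criticality is tested, since $f_{_{\text{sys}}}$ takes points, not laminations, as arguments. I would resolve this by working at any point of the locus on which the curves of $\mathcal{L}$ are simultaneously shortest; one must then verify that the length-increasing deformation supplied by Thurston's Proposition does not force some curve outside $\mathcal{L}$ to drop below the common $\mathcal{L}$-length and become a new systole in a way that reinstates a critical (filling) configuration. Controlling this competing-systole effect is the delicate point, and it is handled precisely because the non-filling of $\mathcal{L}$ leaves an open cone of admissible length-increasing directions rather than an isolated one, so the deformation can be chosen to keep the $\mathcal{L}$-curves as the genuine systoles while still lengthening them.
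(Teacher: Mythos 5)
Your argument is correct in substance but takes a genuinely different route from the paper's. The paper's own proof is two sentences: it asserts that primitive cohomologies lead to unstable homotopy groups, hence to points other than the fixed points of $f_{_{\text{sys}}}$, and concludes that one must work with the pointwise systole function $\text{sys}_{_p}$; the key implication (``unstable homotopy groups $\Rightarrow$ not a critical point'') is left unjustified. You instead stay entirely within the classical Thurston picture: you invoke the filling characterisation of critical points, observe that $\mathcal{L}$ --- a disjoint union of simple closed curves admitting a nontrivial family of compatible pants completions $\{\mathcal{P}\}_{_{\mathcal{L}}}$ --- cannot fill (indeed a multicurve never cuts a surface into polygons), and then apply Thurston's lengthening Proposition to produce a tangent direction along which every $\mathcal{L}$-geodesic lengthens, ruling out criticality at any point where the $\mathcal{L}$-curves are realised as systoles. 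What your route buys is a concrete, checkable argument built only from results the paper actually states (the Proposition on non-filling collections and the containment of critical points in $\mathcal{P}_{_g}$); what the paper's route buys is brevity and a thematic link to the unstable-homotopy/primitive-cohomology discussion of the later sections, at the cost of rigour. Your closing worry about competing systoles is the right one and is resolved exactly as you say: since lengths vary continuously, no curve outside $\mathcal{L}$ can instantaneously become a systole along the lengthening path, so $f_{_{\text{sys}}}$ strictly increases. The one caveat common to both proofs is that the statement tests criticality of a lamination rather than of a point of $\mathcal{T}_{_g}$; your resolution (restrict to the locus where the $\mathcal{L}$-curves are systoles) is explicit, whereas the paper's (pass to $\text{sys}_{_p}$) is only gestured at.
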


\begin{proof}     Primitive cohomologies lead to unstable homotopy groups, thereby corresponding to points other than the fixed points of $f_{_{\text{sys}}}$. This requires using the systole function at a point, sys$_{_p}$.
\end{proof}   

\begin{corollary}   There is no possible interpolation between different laminations, since the virtual cohomological dimension differs in the two cases.  

\end{corollary}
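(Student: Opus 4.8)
The plan is to derive the corollary directly from the preceding Theorem together with the integrality, hence local constancy, of the virtual cohomological dimension. First I would fix the two laminations to be compared, say $\mathcal{L}_{_1}$ and $\mathcal{L}_{_2}$, arising respectively in \ref{Word:case1} and \ref{Word:case2}: in the former the primitive insertions are independent of the symplectic form, whereas in the latter they carry an explicit dependence on the edges of $\Gamma$. Each $\mathcal{L}_{_i}$ determines, through the map $\alpha$ of \eqref{eq:monbeg}, an image subgroup $\alpha\left([|\pi|]_{_1}(\mathcal{L}_{_i}),[F]\right)\ \le\ \text{MCG}_{_{g,n}}$, and Theorem \ref{Theorem:1.1} attaches to each of them the invariant $\text{VCohdim}\left(\alpha\left([|\pi|]_{_1}(\mathcal{L}_{_i}),[F]\right)\right)$ bounding $\text{dim}\,\mathcal{P}_{_{g,n}}\big|_{_{[F]}}$.

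Next I would record that any interpolation between $\mathcal{L}_{_1}$ and $\mathcal{L}_{_2}$ would have to be realised as a continuous path in the deformation-retraction picture of Section \ref{sec:1}, namely a family of $\overline\theta$-structures, equivalently of symplectic forms, connecting the two configurations. Along such a path the associated image subgroups would vary continuously, so their common algebraic invariant $\text{VCohdim}$ would be forced to remain constant, being an integer-valued quantity that cannot change without a discontinuous jump. This is precisely the rigidity underlying the Mc Mullen--Taubes result quoted in the Introduction, where it is shown that there is no smooth path of symplectic forms interpolating between inequivalent fibrations.

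The contradiction then comes from the preceding Theorem. Since $\mathcal{L}$ is not a critical point of $f_{_{\text{sys}}}$, the two laminations do not share a common fixed point of the systole function, and the jump in the primitive cohomology dimension as the symplectic form varies (the phenomenon established by \cite{TY} and traced back to primitive cohomologies by \cite{GTV}) forces $\text{VCohdim}\left(\alpha\left([|\pi|]_{_1}(\mathcal{L}_{_1}),[F]\right)\right)\ \neq\ \text{VCohdim}\left(\alpha\left([|\pi|]_{_1}(\mathcal{L}_{_2}),[F]\right)\right)$. A continuous interpolation would force these two integers to coincide, which is impossible; hence no such interpolation can exist.

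The main obstacle I expect is making precise the topology on the space of laminations and the sense in which $\text{VCohdim}$ is locally constant along it: one must argue that a genuine interpolation would preserve the homeomorphism type of the image subgroup, so that its virtual cohomological dimension could not jump, and then identify the two endpoint values with the two cases rather than merely with the two sides of the bound in \eqref{eq:thsp1intr}. This is where the input of the later sections, in particular the residual finiteness versus non-separability that distinguishes \ref{Word:case1} from \ref{Word:case2}, is needed to certify that the two values of $\text{VCohdim}$ are genuinely distinct, and thus that the jump is an honest obstruction rather than an artifact of the inequality in Theorem \ref{Theorem:1.1}.
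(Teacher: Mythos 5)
Your argument is a genuinely different route from the paper's. The paper's proof is deformation-theoretic: it points to Section \ref{sec:6} and argues that different laminations replacing the primitive insertions give rise to different obstruction bundles, hence different Ext-groups $\text{Ext}^{^1}(F,F)$, $\text{Ext}^{^2}(F,F)$, and therefore different local behaviour of the perturbation near the point class $[F]$ in the singular moduli space; the impossibility of interpolation is read off from the incompatibility of these local deformation data. You instead argue by discreteness: $\text{VCohdim}$ is an integer, a continuous path of $\overline\theta$-structures or symplectic forms would force it to be locally constant, and the two endpoint values differ, so no path exists. Your route is more elementary and makes the logical role of the ``since'' clause in the statement explicit (integrality of the invariant is what converts a numerical difference into an obstruction), and your appeal to the Mc Mullen--Taubes rigidity is in the spirit of the Introduction. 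What the paper's approach buys, and what your version still owes, is the actual source of the inequality $\text{VCohdim}\left(\alpha\left([|\pi|]_{_1}(\mathcal{L}_{_1}),[F]\right)\right)\neq\text{VCohdim}\left(\alpha\left([|\pi|]_{_1}(\mathcal{L}_{_2}),[F]\right)\right)$: the paper derives the distinctness of the invariants from the distinctness of the Ext-groups attached to each lamination, whereas you defer it to the separability discussion and to the jump phenomenon of \cite{TY}, \cite{GTV}, which establishes a jump in $PH^{^{\bullet}}_{_{\pm}}(X,\omega)$ rather than directly in $\text{VCohdim}$ of the image subgroup. To close the loop you would need to splice in the paper's obstruction-bundle step (or an equivalent) at exactly that point; as written, your step (3) assumes the quantitative difference that the paper's proof is designed to produce.
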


\begin{proof}  As explained in Section \ref{sec:6}, this follows from the fact that different laminations replacing primitive cohomology insertions correspond to different obstruction bundles, different Ext-groups, and, therefore, different behaviours of the profile of the perturbation near any such point in a given singular variety.
\end{proof}

\subsection{Hyperbolic groups}

\begin{definition} A hyperbolic group is a type of group in Geometric Group Theory characterised by certain properties related to hyperbolic geometry, specifically, if it acts properly-discontinuously and cocompactly by isometries on a proper hyperbolic metric space.  
\end{definition}
The free product of two hyperbolic groups is also a hyperbolic group, and a subgroup of finite index in a hyperbolic group is also hyperbolic. 
Gromov's surface subgroup conjecture predicts that every 1-ended $\delta$-hyperbolic group contains a subgroup isomorphic to the fundamental group of an orientable surface of some genus $g\ge 2$.   
\begin{definition} An important class of hyperbolic spaces is provided by finitely-generated groups whose Cayley graphs are Gromov hyperbolic spaces. Such groups are called word hyperbolic.   
\end{definition}
Geometric group theory exploits the connection between abstract algebraic properties of a group and geometric properties of a space on which such group acts by isomorphisms. 

The topology of a manifold can be controlled by requiring the gluing data to live in various special subgroups of MCG  
\begin{equation}  
K\ <\ \text{MCG}  \nonumber
\end{equation}  
implying the analysis is brought down to determining whether a given element $f\in\text{MCG}$ given as a product of generators of MCG (such as a composition of Dehn twists) lies in a given subgroup, $K$. Identifying such an algorithm is referred to as the \emph{generalised word problem} for $K$ in MCG (a classical problem introduced by [Margulis] and [Nielson].)     
A more refined problem is that of expressing a given element of MCG in terms of generators of a subgroup.    
This is quantitatively encoded in the notion of group distorsion, essentially comparing the word metric on MCG and $K$. 
In the context of an exact sequence of hyperbolic groups  
\begin{equation}  
1\ \longrightarrow\ H\ \longrightarrow\ G\ \longrightarrow\ Q\ \longrightarrow\ 1   
\end{equation}  
algebraic ending laminations arise in different context, such as when determining the existence of Cannon-Thurston maps, \eqref{eq:CT}, or in presence of closed geodesics exiting the ends of a manifold.    

\begin{definition} To every Gromov hyperbolic space, $X$ (Specifically a word-hyperbolic group ), one can associate the space at infinity, $\partial X$. Such boundary is crucial in relating the decomposition of the hyperbolic groups to the topological properties of the boundary.    
\end{definition}

\begin{definition}   
\begin{equation}
\mathcal{L}\ \subseteq\ \partial^{^2}H\ =\ \left(\partial H\times \partial H\right)\backslash \Delta
\end{equation}    
where $\Delta$ denotes the diagonal of $\partial H\times \partial H$, with $\partial H$ equipped with the Gromov topology. 
\end{definition}

\begin{theorem}   [Mitra] Let $A$ be a finite graph of groups where all vertex and edge groups are word-hyperbolic and all edge groups are quasi-isometric embeddings. Let $G=\pi_{_1}(A, v_{_o})$, and suppose $G$ is word-hyperbolic. Then, for any vertex group, $H$, of $A$, the inclusion 
\begin{equation}  
H\ \longrightarrow\ G  \nonumber
\end{equation}  
induces a continuous Cannon-Thurston map  
\begin{equation}  
\hat i:\ \partial H\ \longrightarrow\ \partial G .    
\label{eq:CT}     
\end{equation}  
\end{theorem}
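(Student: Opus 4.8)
The plan is to realise $G$ as acting geometrically on a \emph{tree of hyperbolic spaces} in the sense of Bestvina--Feighn, and then to produce the boundary extension through Mitra's quasiconvexity criterion rather than by a direct limiting argument. First I would pass from the combinatorial data of the graph of groups $A$ to its Bass--Serre tree $T$, on which $G=\pi_1(A,v_0)$ acts with vertex stabilisers conjugate to the vertex groups and edge stabilisers conjugate to the edge groups. Over $T$ I would build a metric space $X\to T$ whose fibre over a vertex (resp.\ edge) is a copy of the Cayley graph of the corresponding vertex (resp.\ edge) group, glued along the inclusions $X_e\hookrightarrow X_v$. Because every edge group is quasi-isometrically embedded and all fibres are $\delta$-hyperbolic, and because $G$ is assumed word-hyperbolic, the total space $X$ is itself hyperbolic and $G$-equivariantly quasi-isometric to the Cayley graph of $G$; in particular $\partial X\cong\partial G$, while the fibre $X_{v_0}$ over the base vertex is quasi-isometric to $H$, so $\partial X_{v_0}\cong\partial H$.

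The heart of the argument is to show that the inclusion of the base fibre is \emph{uniformly proper at infinity}, which by Mitra's criterion is exactly what forces the boundary map to exist. Concretely, I would fix a basepoint $p\in X_{v_0}$ and establish: for every $M$ there is $N$ such that any geodesic segment $\lambda$ in $X_{v_0}$ lying outside the $N$-ball about $p$ has the property that every geodesic of $X$ joining its endpoints lies outside the $M$-ball about $p$. To do so I would attach to each such $\lambda$ Mitra's \emph{hyperbolic ladder} $B_\lambda\subset X$, obtained by flowing $\lambda$ from fibre to adjacent fibre across the edges of $T$ via nearest-point projection onto the quasi-isometrically embedded edge spaces, and then construct a coarse Lipschitz retraction
\begin{equation}
\Pi_\lambda:\ X\ \longrightarrow\ B_\lambda \nonumber
\end{equation}
whose multiplicative and additive constants depend only on the hyperbolicity constant of the fibres and on the quasi-isometry data of the edge inclusions, and not on $\lambda$.

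Given the retraction, the remaining steps are formal. The existence of a uniformly coarsely Lipschitz retraction onto $B_\lambda$ implies that $B_\lambda$ is uniformly quasiconvex in the hyperbolic space $X$; hence any $X$-geodesic between the endpoints of $\lambda$ stays within a bounded neighbourhood of $B_\lambda$, and tracking how $B_\lambda$ separates from $p$ as one moves outward along $\lambda$ and along $T$ yields the required $N=N(M)$. Mitra's criterion then upgrades this coarse statement to the honest continuous extension $\hat i:\partial H\to\partial G$, which is the Cannon--Thurston map.

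I expect the main obstacle to be precisely the construction of $\Pi_\lambda$ and the proof that its constants are independent of $\lambda$. The delicate point is that flowing $\lambda$ across an edge of $T$ can distort distances, and one must control this distortion \emph{uniformly} over the infinitely many fibres the ladder meets; this is where the quasi-isometric embedding hypothesis on the edge groups is indispensable, since it guarantees that the nearest-point projection between adjacent fibres is coarsely well-defined and coarsely Lipschitz with bounds depending only on the embedding data. Everything else---the identification $\partial X\cong\partial G$, the passage from the retraction to quasiconvexity, and the deduction of continuity from the coarse criterion---is standard once this uniform control is in hand.
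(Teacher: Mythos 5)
Your proposal is correct and follows essentially the same route as the paper's own sketch: both hinge on Mitra's construction of the set $B_{\lambda}$ containing the $H$-geodesic $\lambda$, the uniform coarse Lipschitz retraction onto it, its resulting quasiconvexity, and the criterion that segments far from the basepoint in the $H$-metric stay far in the $G$-metric. Your version is merely more explicit about the Bass--Serre tree of spaces and the uniformity of the constants, which the paper leaves implicit.
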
  

A key step to establish this is to construct, given an $H$-geodesic segment, $\lambda$, a certain set $B_{_{\lambda}}$ containing $\lambda$ in the Cayley graph $\Gamma(G)$ and then defining a Lipschitz retraction  
\begin{equation}  
\pi_{_{\lambda}}:\ \Gamma(G)\ \longrightarrow\ B_{_{\lambda}}  \nonumber
\end{equation}    
with $B_{_{\lambda}}$ quasiconvex in $G$. $B_{_{\lambda}}$ can be used to show that, if $\lambda$ is away from the identity element in the $H$-metric, it is still away from it in the $G$-metric, thereby implying the existence of a Cannon-Thurston map.

For a hyperbolic group $H$ acting properly on a hyperbolic metric space $X$, for example $X$ could be a Cayley graph of a hyperbolic group $G$ containing $H$, choose a generating set of $H$. If $X$ is a Cayley graph, a generating set of $H$ can be extended to one of $G$, assuming a natural inclusion map  
\begin{equation}  
i:\ \Gamma_{_H}\ \longrightarrow\ \Gamma_{_G}  \nonumber
\end{equation}  

Choosing a basepoint, $*$, the orbit map from the vertex set of $H$ to $X$ which sends $h$ to $h*$ is denoted by  
\begin{equation}  
i:\ h\ \mapsto\ h*  \nonumber
\end{equation}  
and can also be extended to the edges of $\Gamma_{_H}$ by sending them to a geodesic segment in $X$.

\subsection{Generating MCG$_{_{g,n}}$}    

Let    
\begin{equation}  
G\ \simeq\ \pi_{_1}\left(S_{_{2g}}\right)\ \longrightarrow\ \Gamma_{_g}   
\end{equation}   
be an injective map. Consider $\partial_{_{\infty}}(G)$, canonically identified with the circle at infinity of the universal cover    
\begin{equation}  
\tilde S_{_{2g}}\ \simeq\ \mathbb{H}^{^2}  \nonumber
\end{equation}  
of $S_{_{2g}}$. Does there exist a continuous $G$-equivariant map  
\begin{equation}  
\partial_{_{\infty}}(G)\ \longrightarrow\ \mathbb{P}\mathcal{ML}_{_o}(\Sigma)  
\end{equation}

A main question that naturally arises in this setup is the following. 

\textbf{Question:} How does the lamination replacing primitive cohomology insertions relate to $\mathbb{P}\mathcal{ML}$?  

For the time being, such question will be left unanswered in the present work, but we plan to report about it in a followup work, making use of additional tools that were not planned to be used in this instance.   

A classical result by Nielson and Thurston is the classification of mapping class group elements, which can be stated in the following succinct manner:

\begin{theorem}  [Nielson, Thurston] Classification of mapping classes. Given a surface $\mathcal{M}_{_{g,n}}$, let  
\begin{equation}  
h:\ \mathcal{M}_{_{g,n}}\ \longrightarrow\ \mathcal{M}_{_{g,n}}  
\end{equation}   
be a homomorphism. Then, at least one of the following is true:  

\begin{itemize}  

\item $h$ is periodic, i.e. some power of $h$ is equal to the identity.  

\item $h$ is reducible, i.e. $h$ preserves some finite union of disjoint simple closed curves on $\mathcal{M}_{_{g,n}}$.  (Dehn twists are one such example).

\item $h$ is pseudo-Anosov, i.e. no power of $h$ fixes any curves on $\mathcal{M}_{_{g,n}}$.  

\end{itemize}  

\end{theorem}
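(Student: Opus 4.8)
The plan is to follow Thurston's original strategy, exploiting the action of $h$ on the Thurston compactification of Teichm\"uller space, which is directly adapted to the lamination-theoretic language developed above. First I would form the closed ball $\overline{\mathcal{T}}_{_{g,n}} = \mathcal{T}_{_{g,n}} \cup \mathbb{P}\mathcal{ML}(S)$, on which the class induced by $h$ acts as a homeomorphism. Since $\overline{\mathcal{T}}_{_{g,n}}$ is homeomorphic to a closed disc of dimension $6g-6+2n$, Brouwer's fixed point theorem guarantees that $h$ fixes at least one point $p \in \overline{\mathcal{T}}_{_{g,n}}$, and the trichotomy then emerges from the location and nature of this fixed point.

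If $p$ lies in the interior $\mathcal{T}_{_{g,n}}$, then $h$ fixes a marked hyperbolic structure, so it is realised by an isometry of that structure; since the isometry group of a closed hyperbolic surface is finite, some power of $h$ is isotopic to the identity and $h$ is periodic. If instead every fixed point lies on the boundary $\mathbb{P}\mathcal{ML}(S)$, I would examine the fixed projective class $[\mu]$. When the supporting lamination of $\mu$ fails to fill $S$ — equivalently when it is carried by a proper subsurface or contains a closed leaf of positive weight — its support, or the boundary of a regular neighbourhood of that support, yields an $h$-invariant isotopy class of essential simple closed multicurves, placing $h$ in the reducible case. The delicate case is when $\mu$ is arational (filling): here I would use the intersection pairing $i(\cdot,\cdot)$ to produce a second fixed projective class $[\nu]$, transverse to $[\mu]$ and scaled by the reciprocal factor, so that $(\mu,\nu)$ is a transverse pair of filling measured foliations with $h_*\mu = \lambda\mu$ and $h_*\nu = \lambda^{-1}\nu$ for some $\lambda > 1$; by Thurston's realisation theorem this data is exactly a pseudo-Anosov representative, and no power can then fix a curve.

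The main obstacle is precisely this arational case: producing the transverse invariant lamination $\nu$ together with the dilatation $\lambda$ requires more than the bare fixed-point theorem. I would obtain it either by applying Brouwer again to $h^{-1}$ and showing the two fixed classes are jointly filling and mutually transverse through an analysis of vanishing intersection number, or — equivalently, and perhaps more cleanly given our metric toolkit — by switching to Bers' Teichm\"uller-metric formulation: set $L(h) = \inf_{X} d_{\mathcal{T}}(X, h\cdot X)$ and split according to whether the infimum is zero and attained (periodic), positive and attained along an $h$-invariant Teichm\"uller geodesic whose initial and terminal quadratic differentials furnish the transverse foliations (pseudo-Anosov), or unattained (reducible). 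Verifying that the unattained case forces a reducing multicurve rests on a properness and thick-part compactness argument controlled by the injectivity radius and the generalised systole function introduced in Section \ref{sec:2}, and that is where the technical weight of the proof resides.
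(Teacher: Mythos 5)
The paper does not actually prove this theorem: it is quoted as a classical result of Nielsen and Thurston and used as background, so there is no in-paper argument to compare against. Measured against the standard literature, your outline is a faithful sketch of the two classical proofs — Thurston's, via the action on the compactification $\overline{\mathcal{T}}_{g,n}=\mathcal{T}_{g,n}\cup\mathbb{P}\mathcal{ML}$ together with Brouwer's fixed point theorem, and Bers' variational proof via the translation length $L(h)=\inf_{X} d_{\mathcal{T}}(X,h\cdot X)$. The case analysis you give (interior fixed point implies periodic by finiteness of the isometry group of a hyperbolic surface; boundary fixed point with non-filling support implies reducible via the boundary of a regular neighbourhood; arational fixed point implies pseudo-Anosov) is the correct skeleton, and you correctly locate the technical weight in the arational case.

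One genuine gap remains: when the fixed projective class $[\mu]$ is arational you implicitly assume the scaling factor satisfies $\lambda>1$. Brouwer only gives $h_{*}\mu=\lambda\mu$ for some $\lambda>0$, and the case $\lambda=1$ with $\mu$ arational must be handled separately — in the standard treatment (Fathi--Laudenbach--Po\'enaru, Expos\'e 9) it is shown to force $h$ to be periodic, essentially by proving that the orbit of a point of $\mathcal{T}_{g,n}$ is then bounded so that one falls back into the interior fixed point case; it cannot be folded into the pseudo-Anosov branch. Relatedly, producing the second fixed class $[\nu]$ by applying Brouwer to $h^{-1}$ does not by itself give transversality or joint filling of the pair; the standard route is to establish north-south dynamics on $\mathbb{P}\mathcal{ML}$ in the $\lambda\neq 1$ arational case, or to extract both foliations from the initial and terminal quadratic differentials of the invariant Teichm\"uller geodesic in Bers' formulation, as you indicate. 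With those two points patched, your outline is the standard and correct proof, and it is considerably more substantive than what the paper offers for this statement.
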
      

\begin{definition} [Thurston] An element $f\in S_{_{g,n}}$ is said to be pseudo-Anosov if there is a representative homeomorphism, $\phi$, a number $\lambda>1$, and a pair of transverse measured foliations, $\mathcal{F}^{^{\Pi}}$, and $\mathcal{F}^{^{\int}}$, such that  
\begin{equation}  
\phi\left(\mathcal{F}^{^{\Pi}}\right)\ =\ \lambda\ \mathcal{F}^{^{\Pi}}\ \ \ \ \ \ \ \ \ \ , \ \ \ \ \ \ \ \ \ \ \phi\left(\mathcal{F}^{^{\int}}\right)\ =\ \lambda^{^{-1}}\ \mathcal{F}^{^{\int}}  \nonumber
\end{equation}
with $\lambda$ denoting the stretch factor (or dilation) of $f$, and $\mathcal{F}^{^{\Pi}}$, $\mathcal{F}^{^{\int}}$ the unstable and stable foliations, respectively. $\phi$ is a pseudo-Anosov homeomorphism.  
\end{definition}

A natural question to ask at this stage is whether there is a systematic way to construct all pseudo-Anosov mapping classes. To better understand this, we make use of Dehn twists mapping classes.

\begin{theorem}  [Dehn]    All mapping classes of $\mathcal{M}_{_{g,n}}$ can be written as products of Dehn twists.  

\end{theorem}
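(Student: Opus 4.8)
The plan is to prove the theorem by induction on the genus $g$, following the classical strategy of Dehn and Lickorish, with the punctures handled at the end via the Birman exact sequence. The central engine will be the change-of-coordinates principle together with the connectivity of the graph of non-separating simple closed curves, so the first thing I would establish is the following algebraic lemma: if $a$ and $b$ are two non-separating simple closed curves on $\mathcal{M}_{_{g,n}}$ with geometric intersection number $i(a,b)=1$, then the product of Dehn twists $T_a T_b$ carries $a$ to $b$. This is a direct computation on a regular neighbourhood of $a\cup b$ (a one-holed torus), and it is the only place where an explicit picture is genuinely needed.

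Next I would upgrade this local statement to a global one. I would verify that the graph $\mathcal{N}$ whose vertices are isotopy classes of non-separating simple closed curves, with an edge joining two vertices whenever the curves meet exactly once, is connected whenever $g\ge 1$. Chaining the lemma along a path in $\mathcal{N}$ then shows that for any two non-separating curves $a,b$ there is a product of Dehn twists taking $a$ to $b$; in particular $\text{MCG}_{_{g,n}}$ acts transitively on non-separating curves and, more usefully, this transitivity is realised entirely inside the subgroup generated by twists.

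With these tools, the inductive step runs as follows. Given an arbitrary $f\in\text{MCG}_{_{g,n}}$, choose a non-separating curve $c$; since $f(c)$ is again non-separating, the previous step furnishes a product of Dehn twists $T$ with $(Tf)(c)=c$ up to isotopy, so that $\psi:=Tf$ fixes the isotopy class of $c$. Cutting $\mathcal{M}_{_{g,n}}$ along $c$ produces a surface of genus $g-1$ with two extra boundary components (or punctures), and the cutting homomorphism identifies the stabiliser of $c$, modulo the twist $T_c$ and a possible curve-reversing element, with the mapping class group of the cut surface. By the inductive hypothesis the induced class is a product of Dehn twists, and pulling these back through the cutting map together with $T_c$ expresses $\psi$, hence $f=T^{-1}\psi$, as a product of twists. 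The base cases are the torus, where $\text{MCG}_{_{1,0}}\cong SL(2,\mathbb{Z})$ is generated by the two twists corresponding to $\left(\begin{smallmatrix}1&1\\0&1\end{smallmatrix}\right)$ and $\left(\begin{smallmatrix}1&0\\1&1\end{smallmatrix}\right)$, and the punctured sphere, handled directly.

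The hard part will be the bookkeeping in the inductive step rather than any single deep fact: one must carefully account for the Dehn twist $T_c$ about the cutting curve itself, for the mapping classes that reverse the orientation of $c$ or permute the two resulting boundary components, and — once punctures are present — for the point-pushing subgroup appearing in the Birman exact sequence, each of whose generators must be shown to be itself a product of twists. Establishing connectivity of $\mathcal{N}$ for small genus, and the transition from boundary components back to punctures via capping, are the remaining technical points that require attention.
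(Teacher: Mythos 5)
The paper offers no proof of this statement at all: it is quoted as a classical background theorem (attributed to Dehn, with Lickorish's later simplification implicit), so there is no argument of the paper's to compare yours against. Your outline is the standard Dehn--Lickorish proof -- the intersection-one lemma $T_aT_b(a)=b$ on a one-holed torus, connectivity of the graph of non-separating curves under intersection-number-one adjacency, transitivity realised inside the twist subgroup, induction via cutting along a non-separating curve, and the Birman exact sequence with point-pushing maps written as $T_aT_b^{-1}$ for the punctured cases. That is the correct strategy, and you have correctly identified where the real work sits (the stabiliser-of-$c$ versus cut-surface bookkeeping, orientation reversal of $c$, and capping boundary back to punctures).

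One substantive caveat you should add: as literally stated, the theorem is false for the \emph{full} mapping class group when $n\ge 2$. Every Dehn twist fixes each puncture individually, so the subgroup generated by Dehn twists lies in the kernel of the natural surjection $\mathrm{MCG}_{g,n}\to \Sigma_n$ onto the symmetric group permuting the punctures; elements inducing a nontrivial permutation (e.g.\ half-twists on a punctured sphere) cannot be products of Dehn twists. The correct statement, and the one your induction actually proves, is that the \emph{pure} mapping class group is generated by (finitely many) Dehn twists. Your base case ``the punctured sphere, handled directly'' silently runs into exactly this issue, since the full mapping class group there is generated by half-twists, not twists. Restricting to the pure mapping class group throughout (or to closed surfaces) repairs the statement and your argument goes through as sketched.
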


Let $A=\{a_{_1},...,a_{_n}\}$ and $B=\{b_{_1},...,b_{_k}\}$ be multicurves, not necessarily closed, on $\mathcal{M}_{_{g,n}}$, in minimal position, such that $A\cup B$ is filling. Then, any product of positive half-twists about $a_{_j}$ and negative half-twists about $b_{_k}$ is pseudo-Anosov,provided that all $n+k$ Dehn twists feature in the product at least once. For concreteness, let us consider the two cases depicted in Figure \ref{fig:twocases}.  

\begin{figure}[ht!]  
\begin{center}  
\includegraphics[scale=0.4]{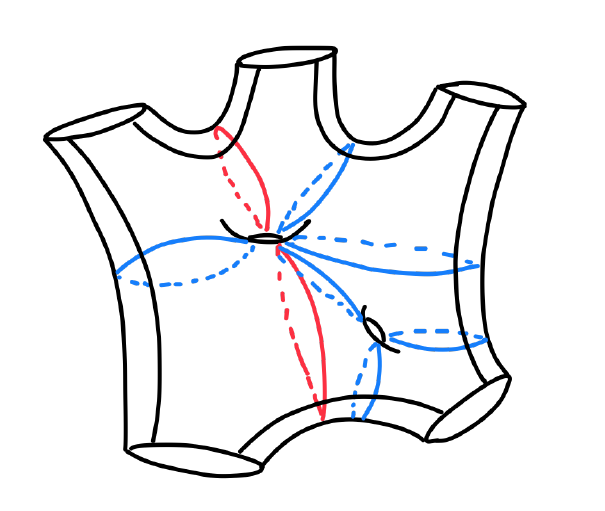}    \ \ \ \ \ \ \ \ \ \ \ \ \  \ \ \ \ \ 
\includegraphics[scale=0.435]{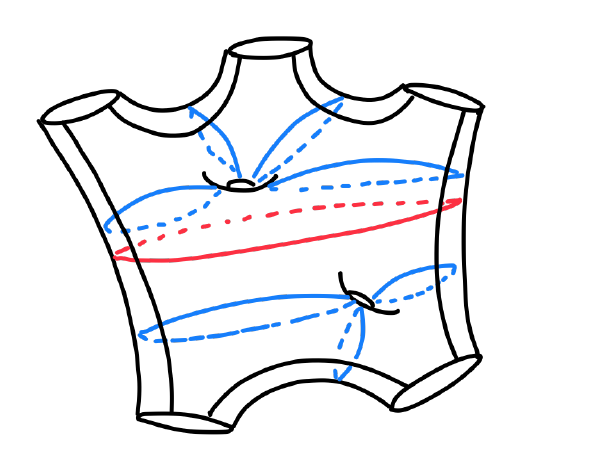}     
\end{center}      
\caption{\small}   
\label{fig:twocases}
\end{figure}

In such case, the multicurves are $A=\{a_{_1},...,a_{_8}\}$ (in red and blue) and $B=\{b_{_1},...,b_{_5}\}$ (in black). $A\cup B$ is maximal in both cases. However, upon applying a Dehn twist on each simple closed curve constituting the filling lamination, 
\begin{equation}  
\phi\ \overset{def.}{=}\ \prod_{_{i=1}}^{^k}D_{_{\mu_{_i}}}^{^{q_{_i}}}  
\end{equation}  
with $\{\mu_{_i}\}$ denoting partitions of the punctures, $1\ \le\ k\ \le\ n$.

A characterisation of a pseudo-Anosov map visualised in polygonal representation is as follows: no curve on the surface is preserved under $\phi^{^n}, \forall n\in\mathbb{N}$.  

As an aside comment, we state the following expectation.

\emph{Conjecture:} The lamination fixing the primitive cohomology splitting of the moduli space must be related to the geodesic axis being preserved under the action of a given pseudo-Anosov map.     

We refrain from providing any rigorous proof in support of such claim, given that it will not be playing any crucial role in the remainder of our work.

\begin{definition}  A projective measured lamination $\lambda\ \in\ \mathcal{PML}$ is filling if $i(\lambda,\gamma)>0$ for every simple closed curve $\gamma$; a filling lamination $\lambda$ is generic if $i(\lambda, \mu)>0, \forall \mu$ on the surface.    
\end{definition}  

\begin{definition} A pseudo-Anosov mapping class $f\in\mathcal{MCG}(S)$ is said to be generic if its stable lamination $\lambda^{^+}$ is generic. The stable lamination of a pseudo-Anosov mapping class if filling. 
\end{definition}

\textbf{Question:} What does the curve graph look like in presence of primitive cohomology insertions? Does it even make sense to construct one? What are the corresponding constraints on it?

\emph{Conjecture:} The translation length diverges between two pseudo-Anosov maps in presence of primitive cohomology insertions in the moduli space.

\begin{proposition}  In presence of primitive cohomology insertions, the lamination associated to a pair of pants decomposition is not a generator of the MCG.
\end{proposition}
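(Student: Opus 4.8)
The plan is to reduce the statement to the strict subgroup inclusion $MCG_{g,n}^{\Gamma_i} < MCG_{g,n}^{p}$ established earlier, combined with Dehn's theorem on generation by twists. First I would recall that any pair of pants decomposition $\mathcal{P}$ is encoded by its associated multicurve, whose union of simple closed geodesics constitutes the lamination in question; the natural candidate for the ``group generated by the lamination'' is the subgroup of $\text{MCG}_{g,n}$ generated by the Dehn twists about the constituent curves of $\mathcal{P}$. Showing that this subgroup is proper is the content of the statement.

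Next I would observe that, in the presence of primitive cohomology insertions, the admissible decompositions are restricted to the family $\{\mathcal{P}\}_{\mathcal{L}}$ that keeps $\mathcal{L}$ fixed. By the discussion surrounding Figure \ref{fig:12}, the geodesic representatives realising certain Dehn twists necessarily cross the simple closed curves replacing the primitive cohomologies. Fixing $\mathcal{L}$ therefore forbids precisely those twists, confining the available generators to the stabiliser $\text{Stab}_{\text{MCG}_{g,n}}(\ell_{\mathcal{M}}^{[F]})$, that is, to the constrained subgroup $MCG_{g,n}^{\Gamma_i}$ featuring constrained pair of pants decompositions.

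The conclusion then follows from the strict inclusion $MCG_{g,n}^{\Gamma_i} < MCG_{g,n}^{p}$: since the Dehn twists compatible with a fixed $\mathcal{L}$ generate only the proper subgroup $MCG_{g,n}^{\Gamma_i}$, while Dehn's theorem guarantees that $\text{MCG}_{g,n}$ is generated by all Dehn twists, including the excluded ones whose supports cross $\mathcal{L}$, the lamination associated to a single admissible pair of pants decomposition cannot generate the full $\text{MCG}_{g,n}$.

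The main obstacle I expect is establishing rigorously that the twists excluded by the fixing of $\mathcal{L}$ are genuinely necessary for generation and cannot be recovered as words in the admissible twists, for otherwise a mere counting mismatch would not preclude generation. This is exactly where the nonrecursive distortion and non-separability of $MCG_{g,n}^{p}$ enter: were the excluded generators expressible in terms of the admissible ones, the generalised word problem for $MCG_{g,n}^{\Gamma_i}$ inside $MCG_{g,n}^{p}$ would become solvable, contradicting the earlier theorems. Thus the failure to generate is forced by the distorted embedding of the constrained subgroup rather than by a dimension count alone.
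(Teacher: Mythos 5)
Your proposal is internally coherent given the paper's earlier results, but it takes a genuinely different route from the paper's own argument. The paper's proof is homotopical rather than combinatorial: it observes that the mapping classes associated to $\mathcal{L}$ land, under $\alpha$, inside the stabiliser $\text{Stab}_{_{\text{MCG}_{_{g,n}}}}\bigl(\ell_{_{\mathcal{M}}}^{^{[F]}}\bigr)$ and, more precisely, inside the kernel of the map to $\text{Aut}\left(\pi_{_2}\left([F]\right),\lambda\right)$ (the identification later written as \eqref{eq:Ker} in Section \ref{sec:6}). Since that map is surjective with nontrivial target, elements of the kernel act trivially on the relevant homotopy and so cannot generate the full group; no appeal to Dehn generation, to the family $\{\mathcal{P}\}_{_{\mathcal{L}}}$, or to the word-problem theorems is made. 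Your route instead formalises ``the lamination generates'' as ``the Dehn twists about its constituent curves generate,'' and derives properness from the strict inclusion $MCG_{_{g,n}}^{^{\Gamma_{_i}}}\ <\ MCG_{_{g,n}}^{^p}$ plus the unsolvability/distortion theorems. What your approach buys is an explicit identification of \emph{which} generators are lost (those whose supporting curves cross $\mathcal{L}$); what the paper's approach buys is independence from any choice of generating set, since it exhibits an invariant (the action on $\pi_{_2}$) that the image of $\mathcal{L}$ fails to move. One caution: your closing appeal to unsolvability of the generalised word problem is doing real work only in the contrapositive form ``if the subgroup were the whole group, membership would be trivially decidable''; as stated, mere inexpressibility of individual twists does not follow from undecidability, so you should phrase that step as deriving a contradiction with the strictness of the inclusion rather than with the algorithmic statement itself.
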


\begin{proof}   This follows from the fact that $\mathcal{L}$ is mapped to elements in the stabiliser of MCG$_{_{g,n}}$ and lie in the kernel of the map to the automorphisms of the second homotopy (cf. Section \ref{sec:6}).  
\end{proof}

\begin{theorem}\label{th:2}  Determining MCG$_{_{g,n}}$ for $\mathcal{M}_{_{g,n}}^{^{\Gamma}}$ with primitive cohomology insertions is an example of an unsolvable word problem, featuring a nonrecursive distorsion for subgroups associated to each node of the graph, $\Gamma$.
\end{theorem}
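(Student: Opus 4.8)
The plan is to prove the undecidability assertion by reducing the identification of $\text{MCG}_{_{g,n}}$ over $\mathcal{M}_{_{g,n}}^{^{\Gamma}}$ to a generalised word (membership) problem for a family of subgroups indexed by the nodes of $\Gamma$, and then to exhibit a nonrecursive distortion for those subgroups. To each node $v_{_i}$ of $\Gamma$ I would attach the subgroup $K_{_i}\ \le\ \text{MCG}_{_{g,n}}$ built exactly as the $K_{_d}$ of Section \ref{sec:1}: the intersection of the relevant stabiliser $\text{Stab}_{_{\text{MCG}_{_{g,n}}}}\left(\ell_{_{\mathcal{M}}}^{^{[F]}}\right)$ with the kernel of the action on the homotopy invariant carrying the primitive insertion localised at $v_{_i}$. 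The graph $\Gamma$ then organises the decorated $\text{MCG}_{_{g,n}}^{^{\Gamma}}$ as the fundamental group $\pi_{_1}(A,v_{_o})$ of a finite graph of groups $A$, with vertex groups the $K_{_i}$ and edge groups prescribed by the primitive cohomology insertions shared between adjacent nodes. This is precisely the data on which Mitra's theorem would act, provided the edge inclusions were quasi-isometric embeddings.

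First I would invoke the equivalence established in the preceding theorem: when the symplectic form depends on the primitive cohomology insertions (\ref{Word:case2}), $\text{MCG}_{_{g,n}}^{^p}$ is non-separable. This is exactly the failure of the quasi-isometric-embedding hypothesis of Mitra's theorem, so that the inclusions $K_{_i}\ \hookrightarrow\ \text{MCG}_{_{g,n}}^{^{\Gamma}}$ are distorted; the geometric origin is the lack of residual finiteness of the monodromy image recorded in \cite{RW}, which by the Mal'cev--McKinsey correspondence already obstructs any effective recognition of membership in $K_{_i}$.

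Next I would close the loop between distortion and decidability. The underlying surface $\text{MCG}_{_{g,n}}$ is automatic, hence has a comparatively tame and solvable word metric, so the pathology must be localised in recognising the node subgroups: any algorithm solving membership in $K_{_i}$ would, by tracking the word lengths it outputs, yield a recursive upper bound on the distortion function of $K_{_i}$ inside $\text{MCG}_{_{g,n}}^{^{\Gamma}}$. Non-separability forbids any such recursive bound, forcing the distortion to be nonrecursive and the membership problem, node by node, to be unsolvable. Since determining $\text{MCG}_{_{g,n}}^{^{\Gamma}}$ means deciding precisely such memberships along the graph of groups, the word problem it encodes is unsolvable.

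The main obstacle I expect lies in making the final step quantitative, namely in proving that the distortion is not merely super-polynomial but genuinely beyond every recursive function. The clean route is a Rips--Mihailova-type construction: embedding a finitely presented group with unsolvable word problem (Novikov--Boone, via the Higman embedding theorem) into the edge-group data, so that the primitive cohomology insertions realise the distorting relators. Verifying that this embedding is compatible with the tangential-structure stabilisers of Section \ref{sec:1}, and that its image genuinely lands inside the $K_{_i}$ rather than collapsing under the maps $\delta^{^{\text{hyp}}}$, is where the real work sits.
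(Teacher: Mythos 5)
Your route is genuinely different from the paper's: the paper does not set up a Bass--Serre decomposition over $\Gamma$ at all, but argues directly from the factorisation of the moduli space into two components glued along a lamination, locating the unsolvability in the inaccessibility of one of the two factors and the distortion in the intrinsic/extrinsic distance discrepancy at the gluing lamination. That said, your proposal contains two concrete logical gaps that prevent it from establishing the statement. The central one is a converse error in the chain from non-separability to unsolvability. The true implications run in one direction only: if $H$ is a finitely generated separable subgroup of a finitely presented group $G$, then the membership problem for $H$ is solvable and the distortion of $H$ in $G$ is bounded by a recursive function; contrapositively, nonrecursive distortion forces non-separability. You need the converse --- that non-separability (equivalently, in your phrasing, the failure of residual finiteness recorded in \cite{RW}) \emph{forces} nonrecursive distortion and unsolvable membership --- and that is not a theorem. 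Mal'cev's argument gives solvability \emph{from} residual finiteness plus finite presentability; its hypotheses failing does not obstruct solvability by other means, and there are finitely generated non-separable subgroups with perfectly recursive, even tame, distortion. The sentence ``non-separability forbids any such recursive bound'' is therefore unsupported and, as a general implication, false.

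The second gap is the appeal to Mitra's theorem. That theorem concludes the existence of a Cannon--Thurston map \emph{from} the hypothesis that edge groups are quasi-isometrically embedded; observing that this hypothesis fails tells you nothing about the inclusions $K_{_i}\hookrightarrow \text{MCG}_{_{g,n}}^{^{\Gamma}}$ --- it only means the theorem is silent. The sole mechanism in your proposal that could genuinely deliver a nonrecursive distortion function and an unsolvable generalised word problem is the Rips--Mihailova-type embedding of a finitely presented group with unsolvable word problem into the edge-group data, and you explicitly defer that construction, together with its compatibility with the stabilisers $\text{Stab}_{_{\text{MCG}_{_{g,n}}}}\left(\ell_{_{\mathcal{M}}}^{^{[F]}}\right)$ and the maps $\delta^{^{\text{hyp}}}$, to future work. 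Since that is precisely the content of the theorem, the proposal identifies a plausible target but does not prove the statement; for what it is worth, the paper's own argument also stops short of carrying out such a construction, so neither route is closed as written.
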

\begin{proof}    \emph{Question:} How to solve the word problem in presence of primitive cohomological insertions? Making use of the finite virtual cohomological dimension. Reason why the GW-calculation might be of use. The problem is unsolvable if considering a subgroup associated only to 1 of the 2 sides of the factorised moduli space. But if we are able to identify the other side as well, then the tensor product of subgroups, a possible lamination defining the generators of MCG could be obtained once having chosen suitable Dehn twist powers. Of course, a suitable gluing should be enforced at the gluing lamination between the two components of the moduli space, most likely a constraint on the intrinsic/extrinsic distance, and distorsions, such as exponential behaviour, etc. 
\end{proof}   

\begin{theorem} [Penner]  Assume $A=\{\alpha_{_1},...,\alpha_{_m}\}$ and $B=\{\beta_{_1},...,\beta_{_n}\}$ be two multicurves that fill a surface $S$. If $f$ is the word made from the product of positive Dehn twists about $\alpha_{_i}$ and negative Dehn twists about $\beta_{_j}$, where all $\alpha_{_i},\beta_{_j}$ are used at least once, then $f$ is a pseudo-Anosov map.  
\end{theorem}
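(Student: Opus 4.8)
The plan is to realise $f$ as acting by a Perron--Frobenius matrix on a cone of measured laminations carried by a train track dual to $A\cup B$, and then to extract the stable and unstable foliations directly from the Perron--Frobenius eigenvectors. This is precisely the setting of Thurston's definition of pseudo-Anosov recorded above, so it suffices to produce a transverse pair $\mathcal{F}^{u},\mathcal{F}^{s}$ of filling measured foliations and a number $\lambda>1$ with $f(\mathcal{F}^{u})=\lambda\,\mathcal{F}^{u}$ and $f(\mathcal{F}^{s})=\lambda^{-1}\,\mathcal{F}^{s}$.

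First I would place $A$ and $B$ in minimal position. Since $A\cup B$ fills $S$, each component of the complement is a disk or a once-punctured disk, so the smoothings of the transverse intersections of $A\cup B$ are carried by a train track (bigon track) $\tau$. Measured laminations carried by $\tau$ are then parametrised by a cone of non-negative branch-weight vectors in $\mathbb{R}^{N}$, where $N$ is the number of branches of $\tau$. Second, I would compute the action of each generator in these coordinates: a positive Dehn twist $T_{\alpha_{i}}$ acts by a non-negative integer matrix $P_{i}$ which adds weight proportional to $i(\,\cdot\,,\alpha_{i})$ along $\alpha_{i}$, and dually the negative twist $T_{\beta_{j}}^{-1}$ acts by a non-negative integer matrix $Q_{j}$. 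The essential geometric input is that because every $\alpha_{i}$ meets some $\beta_{j}$ essentially (filling), these matrices genuinely mix the branch coordinates associated to $A$ with those associated to $B$, rather than block-decomposing.

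Third -- and this is the heart of the argument -- I would show that the product matrix $M$ representing $f$ is a Perron--Frobenius (primitive) matrix, that is, non-negative with some power strictly positive. This is exactly where the hypothesis that \emph{every} $\alpha_{i}$ and \emph{every} $\beta_{j}$ occurs at least once in the word is used: it forces the support of the iterated product $M^{k}$ eventually to connect each branch of $\tau$ to every other branch (irreducibility) and to do so aperiodically. Granting this, the Perron--Frobenius theorem supplies a simple dominant eigenvalue $\lambda$ equal to the spectral radius of $M$, together with a strictly positive eigenvector; since $M$ is a non-negative integer matrix that is not a permutation matrix, one has $\lambda>1$. The positive eigenvector determines a measured foliation $\mathcal{F}^{u}$ with $f(\mathcal{F}^{u})=\lambda\,\mathcal{F}^{u}$, and the dominant eigenvector of the transpose (equivalently, the analogous cone for $f^{-1}$) yields the transverse foliation $\mathcal{F}^{s}$ with $f(\mathcal{F}^{s})=\lambda^{-1}\,\mathcal{F}^{s}$. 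Because $A\cup B$ fills, both foliations fill and are mutually transverse, so $f$ is pseudo-Anosov with dilatation $\lambda$, as claimed.

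I expect the main obstacle to be precisely the primitivity step: verifying that the single product matrix $M$ (not merely the monoid it generates) is Perron--Frobenius requires a careful combinatorial analysis of how each twist matrix propagates positive weight through $\tau$, and checking both irreducibility and aperiodicity from the ``each generator used at least once'' hypothesis. The remaining inputs -- the existence and non-negativity of the train-track coordinates, the eigenvector-to-foliation dictionary, and the filling/transversality of the resulting foliations -- are standard once $\tau$ is in hand. I would note finally that this statement is the multicurve strengthening of the positive/negative half-twist criterion already invoked in Figure~\ref{fig:twocases}, so the two are consistent.
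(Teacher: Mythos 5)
The paper does not actually prove this statement: it is quoted as Penner's classical criterion, with no \verb|proof| environment attached, so there is no in-paper argument to compare yours against. Judged on its own terms, your sketch follows Penner's original route (bigon track $\tau$ obtained by smoothing $A\cup B$ in minimal position, non-negative twist matrices on the cone of branch weights, Perron--Frobenius theory to extract the invariant measured foliations), and the overall architecture is sound. You also correctly identify the genuine content of the proof, namely the primitivity of the product matrix, which is exactly where the hypotheses ``$A\cup B$ fills'' and ``every $\alpha_{i},\beta_{j}$ is used at least once'' enter.

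Three points deserve tightening. First, the non-negativity of the matrices $P_{i}$ and $Q_{j}$ is not automatic: it holds only because the smoothing convention on the bigon track is chosen compatibly with \emph{positive} twists about the $\alpha_{i}$ and \emph{negative} twists about the $\beta_{j}$; mixing signs destroys non-negativity, and this is precisely why Penner's hypothesis has the asymmetric form it does. Second, your justification that $\lambda>1$ (``a non-negative integer matrix that is not a permutation matrix'') is too weak as stated --- a unipotent upper-triangular integer matrix is not a permutation matrix yet has spectral radius $1$; the correct statement is that a \emph{primitive} integer matrix of size at least $2$ has spectral radius strictly greater than $1$, so the inequality $\lambda>1$ is a consequence of the primitivity you establish in the previous step, not an independent observation. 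Third, obtaining $\mathcal{F}^{s}$ ``from the transpose'' needs justification: the standard move is to observe that $f^{-1}$ is again a word of Penner type (positive about the $\beta_{j}$, negative about the $\alpha_{i}$, read in reverse order), run the same construction for it, and then invoke a criterion guaranteeing that an element preserving a filling measured foliation with no closed leaves and scaling factor $\lambda>1$ is pseudo-Anosov, so that the two invariant foliations are automatically transverse and realise the dilatations $\lambda$ and $\lambda^{-1}$. With those repairs the argument is the standard and correct proof of the criterion.
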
    

\begin{definition} The distance on the curve graph $\mathcal{C}\left(\mathcal{M}_{_{g,n}}\right)$, is denoted by d$_{_{\mathcal{C}}}(\alpha,\beta)$. The asymptotic translation length (oftentimes referred to as the stable translation length) of a MCG element $f\in \text{MCG}_{_{g,n}}$ on the curve graph is    

\begin{equation}  
\ell_{_{\mathcal{C}}}(f)\ \overset{def.}{=}\ \underset{n\rightarrow\infty}{\lim}\ \frac{\text{d}_{_{\mathcal{C}}}(\alpha,f^{^n}(\alpha)}{n}  
\label{eq:alphaf}   
\end{equation}  
where $\alpha$ is any vertex of $\mathcal{C}(S)$. \eqref{eq:alphaf} is independent with respect to the choice of $\alpha$.
\end{definition}

\begin{figure}[ht!]    
\begin{center}    
\includegraphics[scale=0.5]{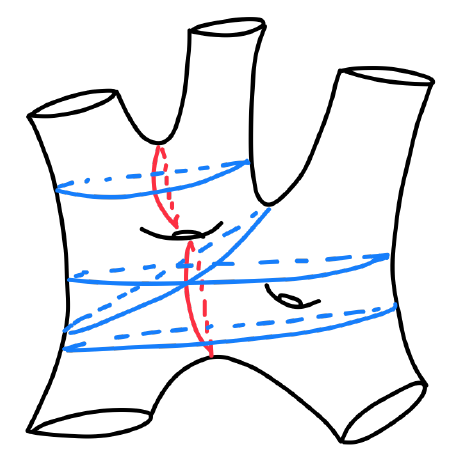}   
\end{center}  
\label{fig:16}   
\caption{\small }
\end{figure}

\begin{enumerate}  

\item Draw all the forbiddden simple closed curves (up to homotopy) that would cross the cohomological insertions.

\item Count how many of them give a pants decomposition, $k$.

\item To each one satisfying 2), corresponds a Dehn twist that can be used as a building block for the generators of MCG$_{_{g,n}}$ of $\mathcal{M}_{_{g,n}}$ without the primitive cohomology insertions.  

\item

\begin{equation}   
h+g_{_1}+g_{_2}-k  \nonumber
\end{equation}  
is the number of simple closed curves that would not be giving a pair of pants decomposition alone.     
\end{enumerate}

\textbf{Question:} What do these simple closed curves correspond to from the point of view of MCG elements (from a GGT point of view)?\color{black}

Because some group elements need to be removed to achieve the new MCG$_{_{g,n}}^{^p}$ with primitive insertions. 

Key point: the primitive cohomological insertions take into account the simple closed curves in MCG$_{_{g,n}}$ that need to be removed to achieve the generators of MCG$_{_{g,n}}^{^p}$, with simple cohomology insertions.

\subsection{Subgroups of MCG$_{_{g,n}}$}

Complete intersections in projective space provide an example of the study of different surface subgroups in MCG$_{_{g,n}}$.  

\begin{equation}   
\text{MCG}^{^p}_{_{g,n}}\bigg\backslash \left(\text{MCG}^{^{(1)}}_{_{g_{_1},n_{_1}}}\ \sqcup\ \text{MCG}^{^{(2)}}_{_{g_{_2},n_{_2}}}\right)\ =\ \bigsqcup_{_{i=1}}^{^{h-1}}\ \text{MCG}^{^{\Gamma_{_i}}}_{_{g,n}}
\end{equation}

\begin{theorem}  

\begin{equation}  
MCG_{_{g,n}}^{^{\Gamma}}\ <\ MCG_{_{g,n}}^{^p}  \label{eq:multigamma}
\end{equation}  
with the left-hand-side featuring constrained pair of pants decompositions.
\end{theorem}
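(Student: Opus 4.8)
The plan is to realise $\text{MCG}_{g,n}^{\Gamma}$ as the stabiliser, inside $\text{MCG}_{g,n}^{p}$, of the preferred lamination $\mathcal{L}$ replacing the primitive cohomology insertions, and then to read off both the subgroup property and its properness from the behaviour of the admissible pants decompositions $\{\mathcal{P}\}_{\mathcal{L}}$. Concretely, I would first fix the interpretation of the two groups: $\text{MCG}_{g,n}^{p}$ is the full mapping class group acting on $\mathcal{M}_{g,n}$ dressed with primitive insertions, while dressing those insertions instead by the graph $\Gamma$ amounts, by the construction of the preferred lamination recalled above, to demanding that the union of simple closed curves $\mathcal{L}$ encoding the edges and vertices of $\Gamma$ be preserved. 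Thus $\text{MCG}_{g,n}^{\Gamma}$ is precisely the set of mapping classes $f \in \text{MCG}_{g,n}^{p}$ with $f(\mathcal{L}) = \mathcal{L}$ up to isotopy.

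The subgroup inclusion is then immediate: the stabiliser of any isotopy class of multicurve under a mapping class group is closed under composition and inversion, since $f(\mathcal{L})=\mathcal{L}$ and $g(\mathcal{L})=\mathcal{L}$ give $(f\circ g)(\mathcal{L})=\mathcal{L}$ and $f^{-1}(\mathcal{L})=\mathcal{L}$. Hence $\text{MCG}_{g,n}^{\Gamma} \le \text{MCG}_{g,n}^{p}$. The clause about constrained pants decompositions is exactly the content of keeping $\mathcal{L}$ fixed: an element of $\text{MCG}_{g,n}^{\Gamma}$ permutes only those decompositions lying in the admissible family $\{\mathcal{P}\}_{\mathcal{L}}$, whereas a generic element of $\text{MCG}_{g,n}^{p}$ is free to carry $\mathcal{L}$ onto one of the forbidden curves crossing the cohomological insertions (the curves enumerated just before this subsection). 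This dovetails with the earlier Proposition that $\mathcal{L}$ maps into the stabiliser of $\text{MCG}_{g,n}$ and lies in the kernel of the map to $\text{Aut}(\pi_3(X_d),\lambda)$, so that the generators realising $\mathcal{L}$ do not generate the full group.

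For strictness of the inclusion I would exhibit an explicit element of $\text{MCG}_{g,n}^{p}$ that is not in $\text{MCG}_{g,n}^{\Gamma}$. By the Nielsen--Thurston classification any pseudo-Anosov map fixes no simple closed curve, and hence cannot preserve $\mathcal{L}$; since complete intersections in projective space always admit such maps, obtained via Penner's construction from the filling multicurves $A \cup B$ of Section~\ref{sec:5}, any such pseudo-Anosov lies in $\text{MCG}_{g,n}^{p}$ but not in $\text{MCG}_{g,n}^{\Gamma}$. Alternatively, a single Dehn twist about a forbidden curve crossing $\mathcal{L}$ moves $\mathcal{L}$ and furnishes the same conclusion. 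Finally I would check consistency with the decomposition displayed above, whereby the cosets of the split factors $\text{MCG}^{(1)}$ and $\text{MCG}^{(2)}$ reassemble into the disjoint union $\bigsqcup_{i=1}^{h-1}\text{MCG}_{g,n}^{\Gamma_i}$ indexed by the $h-1$ internal edges of $\Gamma$, which identifies the individual $\text{MCG}_{g,n}^{\Gamma_i}$ appearing in the companion statement of the Introduction.

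The main obstacle I anticipate is not the algebra of stabilisers but pinning down the two groups precisely enough that the constraint $f(\mathcal{L})=\mathcal{L}$ is genuinely the defining difference between them; in particular one must verify that passing from primitive insertions to the graph $\Gamma$ does not alter the ambient surface but only freezes the lamination, so that the comparison takes place inside a single mapping class group. Once that identification is secured, the subgroup statement and its properness follow from standard stabiliser arguments together with the Nielsen--Thurston and Penner existence of mapping classes not fixing $\mathcal{L}$.
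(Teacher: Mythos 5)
Your proposal is correct in outline and reaches the same conclusion, but it takes a genuinely more concrete route than the paper. The paper's own proof is a generator-counting heuristic: increasing the number of primitive insertions constrains the admissible pair of pants decompositions, hence the available generators, hence (if inequivalent choices of generators yield inequivalent groups) a different and smaller mapping class group; the paper even hedges on the nature of the subgroup with a ``most likely'' regarding non-separability, and it offers no explicit witness for properness of the inclusion. You instead realise $\text{MCG}_{g,n}^{\Gamma}$ as the stabiliser of the multicurve $\mathcal{L}$ inside $\text{MCG}_{g,n}^{p}$, which makes the subgroup property a one-line consequence of closure of stabilisers under composition and inversion, and you supply an explicit element certifying strictness (a pseudo-Anosov class, which by Nielsen--Thurston preserves no multicurve, or more simply a Dehn twist about a curve crossing $\mathcal{L}$). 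This buys a cleaner and more verifiable argument than the paper's, at the cost of the identification $\text{MCG}_{g,n}^{\Gamma} = \text{Stab}(\mathcal{L})$, which the paper never states in this form but which is consistent with its informal definitions of the preferred lamination and the admissible family $\{\mathcal{P}\}_{\mathcal{L}}$; you correctly flag that identification as the step requiring care. One caveat: your strictness witness presupposes that $\mathcal{L}$ is a nonempty multicurve of essential simple closed curves, which holds whenever there is at least one primitive insertion, so the argument degenerates gracefully in the insertion-free case where the two groups coincide.
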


\begin{proof}    Increasing the number of primitive cohomology insertions constraints more and more the number of possible pair of pants decompositions, corresponding to the generators of MCG. Some possibilities must be ruled out if not accounting for the simple closed curves replacing primitive insertions. If different choices of generators are not equivalent,  the corresponding MCG is also different. Most likely, the MCG in question is a nonseparable subgroup of MCG$_{_{g,n}}^{^p}$. 
\end{proof} 

\begin{proposition}  Different primitive cohomology insertions correspond to different subgroups of the MCG; in particular, they correspond to different $f_{_{\text{sys}}}$ on $\mathcal{T}_{_{g,n}}$, that cannot be smoothly interpolated among each other, thereby featuring different critical points, etc.  
\end{proposition}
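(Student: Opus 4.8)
The plan is to prove the two assertions in turn, leveraging the machinery already assembled in Sections~\ref{sec:5} and~\ref{sec:1}. First I would recall that, by the lemmas of Section~\ref{sec:5}, any prescribed collection of primitive cohomology insertions is replaced by a preferred lamination $\mathcal{L}$, the union of the simple closed curves sitting at the insertion loci, and that $\mathcal{L}$ cuts down the admissible pair of pants decompositions to the constrained family $\{\mathcal{P}\}_{_{\mathcal{L}}}$ which keeps $\mathcal{L}$ fixed. Via the inclusion $\text{MCG}_{_{g,n}}^{^{\Gamma}}\ <\ \text{MCG}_{_{g,n}}^{^p}$ of \eqref{eq:multigamma}, this family singles out a subgroup of the mapping class group whose generators are precisely the Dehn twists compatible with $\{\mathcal{P}\}_{_{\mathcal{L}}}$.

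For the first claim I would argue by contraposition. Suppose two insertion configurations produced laminations $\mathcal{L}\neq\mathcal{L}'$ yet the same subgroup. Because $\mathcal{L}$ and $\mathcal{L}'$ pin down different sets of fixed simple closed curves, the constrained families $\{\mathcal{P}\}_{_{\mathcal{L}}}$ and $\{\mathcal{P}\}_{_{\mathcal{L}'}}$ differ, and by the Corollary identifying $|\{\mathcal{P}\}_{_{\mathcal{L}}}|$ with the number of distinct generating sets of $\text{MCG}_{_{g,n}}$ the two subgroups would then have to coincide while carrying incompatible generator data. The contradiction is sealed by the Proposition of Section~\ref{sec:5}: a lamination replacing primitive insertions is mapped into $\text{Stab}_{_{\text{MCG}_{_{g,n}}}}$ and lies in the kernel of the map to $\text{Aut}(\pi_{_3})$, so distinct laminations produce distinct stabiliser/kernel data and hence genuinely distinct subgroups.

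The second claim I would read through the critical-point structure of the systole function. Recall that the critical points of $f_{_{\text{sys}}}$ are exactly the points of the Thurston spine representing surfaces cut into polygons by their filling systoles, their count being twice the number of pair of pants decompositions. Constraining to $\{\mathcal{P}\}_{_{\mathcal{L}}}$ restricts which filling configurations are admissible; since the preferred lamination is \emph{not} itself a critical point of the global $f_{_{\text{sys}}}$, the relevant object is the pointwise generalised systole $\text{sys}_{_p}$ evaluated at the insertion locus. Distinct laminations therefore select distinct admissible critical sets of $\text{sys}_{_p}$, i.e. different effective systole functions on $\mathcal{T}_{_{g,n}}$. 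The impossibility of a smooth interpolation is then inherited from the Corollary of Section~\ref{sec:5}: distinct laminations correspond to distinct obstruction bundles and Ext-groups, hence to different virtual cohomological dimensions, and no smooth path of symplectic forms can connect configurations whose deformation profiles differ in this way --- the same inequivalence phenomenon detected by McMullen and Taubes.

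The main obstacle I anticipate is establishing the \emph{injectivity} of the assignment from insertion data to the pair consisting of subgroup and critical set: one must exclude the possibility that two genuinely different insertion configurations accidentally yield the same constrained family $\{\mathcal{P}\}_{_{\mathcal{L}}}$ or the same critical set of $\text{sys}_{_p}$. This requires controlling both the combinatorics of filling laminations and the distortion of the associated subgroups --- precisely the nonrecursive distortion flagged in Theorem~\ref{th:2} --- and matching them against the Morse-theoretic index data. I would handle this by working with $\text{sys}_{_p}$ rather than the global $f_{_{\text{sys}}}$, so that the insertion locus enters the count directly, and by showing that a difference in $\mathcal{L}$ forces a difference either in the number of admissible decompositions or in the quadratic-refinement data $\mu_{_{\ell}}$ of Section~\ref{sec:1}, either of which suffices to separate the two systole functions.
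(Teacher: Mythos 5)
The first thing to note is that the paper itself offers \emph{no} proof of this proposition: the proof environment following the statement is empty. There is therefore no ``paper's approach'' for your proposal to coincide with or diverge from; you are supplying an argument where the authors supply nothing. On its own terms, your proposal does a reasonable job of assembling the surrounding machinery --- the replacement of primitive insertions by the preferred lamination $\mathcal{L}$, the constrained family $\{\mathcal{P}\}_{_{\mathcal{L}}}$, the inclusion $\text{MCG}_{_{g,n}}^{^{\Gamma}} < \text{MCG}_{_{g,n}}^{^p}$, the non-criticality of $\mathcal{L}$ for $f_{_{\text{sys}}}$ and the passage to $\text{sys}_{_p}$, and the obstruction-bundle corollary for the non-interpolation claim --- and this is a plausible reconstruction of what the authors presumably had in mind.

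That said, there is a genuine gap, and you have correctly identified it yourself without closing it: the injectivity of the assignment from insertion data to subgroup (and to critical set of $\text{sys}_{_p}$) is exactly the content of the proposition, and your contraposition argument presupposes it rather than proving it. Saying that $\mathcal{L}\neq\mathcal{L}'$ forces $\{\mathcal{P}\}_{_{\mathcal{L}}}\neq\{\mathcal{P}\}_{_{\mathcal{L}'}}$ is not automatic --- two distinct multicurves can a priori be compatible with the same collection of pants decompositions, or be related by a mapping class that identifies the two constrained families --- and the corollary you invoke (identifying $|\{\mathcal{P}\}_{_{\mathcal{L}}}|$ with the number of generating sets) is itself proved in the paper only by a forward reference. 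Likewise, the non-interpolation claim rests on the corollary about distinct obstruction bundles and Ext-groups, whose proof in the paper is again a deferral to Section \ref{sec:6}. So your argument is a coherent chain of reductions to other unproved statements in the paper, not a self-contained proof; to close it you would need an independent argument that distinct laminations up to the action of $\text{MCG}_{_{g,n}}$ yield distinct stabiliser data, for instance via the quadratic refinement $\mu_{_{\ell}}$ as you suggest, but that step is currently only announced, not carried out.
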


\begin{proof}     
\end{proof}  

For Case 1, \eqref{eq:multigamma} reduces to 

\begin{equation}  
\text{MCG}_{_{g,n}}\big\backslash \text{MCG}^{^{(1)}}_{_{g_{_1},n_{_1}}}\ =\ \text{MCG}^{^{(2)}}_{_{g_{_2},n_{_2}}}   
\end{equation}

\begin{figure}[ht!]  
\begin{center}  
\includegraphics[scale=0.6]{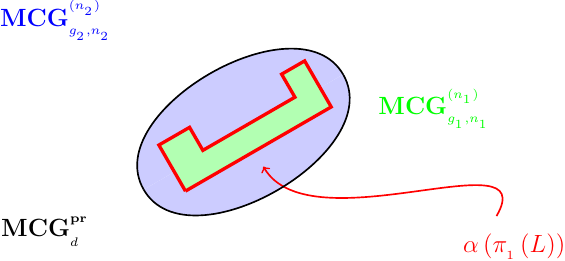}   
 \caption{\small For Case 1, \ref{Word:case1}, MCG splits into two parts and the interface in between the different subgroups is an element of the stabiliser. In the pre-image of the map \eqref{eq:monbeg}, it corresponds to the fundamental group of a simple closed curve, $L$.}
\label{fig:concave}     
\end{center}  
\end{figure}

\begin{figure}[ht!]  
\begin{center}  
\includegraphics[scale=0.5]{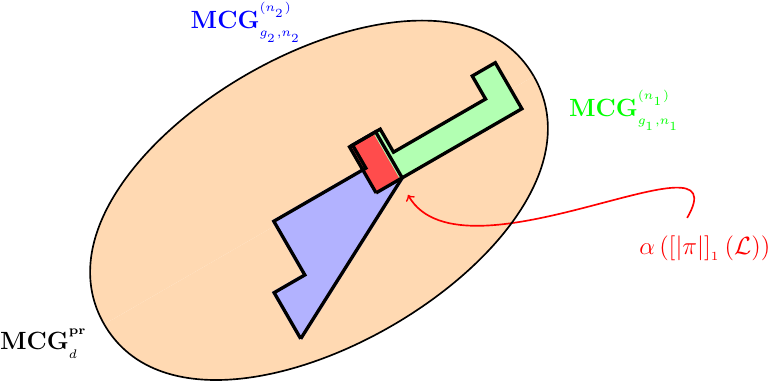}   
 \caption{\small For Case 2, \ref{Word:case2}, MCG can be split into different pairs of subgroups whose intersection is a subgroup of the stabiliser itself, whose pre-image of the map \eqref{eq:monbeg}, it corresponds to a the fundamental germ of a lamination, $\mathcal{L}$. The complement of the union of the two subgroups corresponds to the different other possible choices of the graph dressing $\mathcal{M}_{_{g,n}}$.}
 \label{fig:concave1} 
\end{center}  
\end{figure} 

The notation in Figure \ref{fig:concave} and \ref{fig:concave1} will be made much more rigorous in Section \ref{sec:6}. In particular, with regard to the choice of the point class, which we still haven't introduced in this context. 

\subsection{Intrinsic and extrinsic distances}

\begin{proposition}
\begin{equation}  
MCG_{_{g,n}}^{^{\Gamma}}\ <\ MCG_{_{g,n}}^{^p}  
\end{equation}  
with the left-hand-side featuring constrained pair of pants decompositions, due to the presence of an underlying lamination fixing a multicurve replacing primitive cohomologies.
\end{proposition}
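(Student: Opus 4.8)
The plan is to exhibit $MCG^{^{\Gamma}}_{_{g,n}}$ as the subgroup of $MCG^{^p}_{_{g,n}}$ generated only by those Dehn twists compatible with the multicurve $\mathcal{L}$ replacing the primitive cohomology insertions, and then to certify the strictness of the inclusion by producing an explicit twist lying outside it.

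First I would invoke Dehn's theorem, recalled above, to write every element of $MCG^{^p}_{_{g,n}}$ as a product of Dehn twists about the simple closed curves appearing in pair of pants decompositions. By the Lemma on $\{\mathcal{P}\}_{_{\mathcal{L}}}$ and its Corollary, each admissible decomposition furnishes a candidate generating set, and the number of such sets is controlled by the genus while dropping as the number of primitive insertions grows. Requiring $\mathcal{L}$ to be fixed selects precisely the decompositions in $\{\mathcal{P}\}_{_{\mathcal{L}}}$, namely those built from curves $\gamma$ with $i(\gamma,\mathcal{L})=0$. I would therefore identify $MCG^{^{\Gamma}}_{_{g,n}}$ with the subgroup generated by this constrained collection of twists, which sits inside the stabiliser $\text{Stab}(\mathcal{L})$ of the multicurve.

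Next I would establish the strict inclusion. Since at least one primitive insertion is present, $\mathcal{L}$ is a nonempty, non-filling multicurve — indeed, by the earlier Proposition it can be completed to a filling lamination — so there exists a simple closed curve $\gamma$ with $i(\gamma,\mathcal{L})>0$. The twist $D_{_{\gamma}}$ is a genuine element of $MCG^{^p}_{_{g,n}}$, yet it pushes $\mathcal{L}$ off itself and therefore cannot preserve any decomposition in $\{\mathcal{P}\}_{_{\mathcal{L}}}$; it is thus not realised by any word in the admissible generators, so $D_{_{\gamma}}\notin MCG^{^{\Gamma}}_{_{g,n}}$. This yields $MCG^{^{\Gamma}}_{_{g,n}}<MCG^{^p}_{_{g,n}}$, the left-hand side being exactly the locus of constrained pair of pants decompositions that keep the multicurve $\mathcal{L}$ fixed, as claimed.

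The hard part will be reconciling the combinatorial subgroup attached to the graph $\Gamma$ — with its vertex genus and $\beta$-data — with the purely geometric stabiliser of $\mathcal{L}$. Making this dictionary precise requires checking that cutting the surface along $\mathcal{L}$ reproduces the vertex--edge structure of $\Gamma$, and that the twists supported on each complementary piece generate the corresponding vertex subgroup; this is where the stabiliser and tangential-structure analysis of Section~\ref{sec:1}, together with the splitting described in Section~\ref{sec:6}, must be brought in to close the argument.
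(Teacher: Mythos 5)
Your proposal is correct in outline but takes a genuinely different, and considerably more explicit, route than the paper. The paper's entire proof of this proposition is the single sentence ``It is instructive to consider Figure \ref{fig:concave1}'', i.e.\ an appeal to the picture of overlapping subgroups inside the stabiliser; the earlier occurrence of the same statement as a theorem is argued only heuristically, by saying that adding primitive insertions rules out some pair of pants decompositions and hence some choices of generators. You instead build the inclusion concretely: Dehn generation of $MCG^{^p}_{_{g,n}}$, identification of $MCG^{^{\Gamma}}_{_{g,n}}$ with the subgroup generated by twists about curves $\gamma$ with $i(\gamma,\mathcal{L})=0$, and a certificate of strictness via a twist $D_{_{\gamma}}$ with $i(\gamma,\mathcal{L})>0$, which cannot lie in the constrained subgroup because every constrained generator fixes the isotopy class of $\mathcal{L}$ while $D_{_{\gamma}}$ does not. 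That last step is the one piece of genuinely checkable mathematics in either argument, and it is sound. What your approach buys is an actual proof of strict containment rather than a plausibility argument; what it costs is exactly the issue you flag yourself: the paper never defines $MCG^{^{\Gamma}}_{_{g,n}}$ precisely enough to justify identifying it with the twist subgroup supported off $\mathcal{L}$, so that identification remains an assumption in your argument just as it is an implicit assumption in the paper's figure. Since the paper offers no dictionary between the graph data $\Gamma$ and the stabiliser of $\mathcal{L}$ either, your proposal is not missing anything the paper supplies; it simply makes the missing ingredient visible.
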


\begin{proof}  It is instructive to consider Figure \ref{fig:concave1}.

\end{proof}

\begin{definition} The distance in the pair of pants decomposition complex, $\mathcal{P}$, 
\begin{equation}    
l_{_{\mathcal{P}}}(f)\ \overset{def.}{=}\ \lim_{_{n\rightarrow\infty}}\frac{\text{d}_{_{\mathcal{P}}}\left(\alpha, f^{^n}(\alpha)\right)}{n}  
\end{equation}   
\end{definition}   

Combining this with the treatment in Section \ref{sec:2}, we are led to the following

\begin{definition}  An intrinsic and extrinsic distance on MCG$_{_{g,n}}^{^{\Gamma}}$ with respect to the ambient group MCG$_{_{g,n}}^{^p}$. 
\end{definition}

\begin{proof}     This follows from the fact that every finitely-generated subgroup of MCG$_{_{g,n}}^{^p}$ is open in the profinite topology.
\end{proof}

\begin{lemma}  The number of constituents of the preferred family of pair of pants decompositions, $\{\mathcal{P}\}_{_{\mathcal{L}}}$, labelled by a lamination, $\mathcal{L}$, replacing primitive cohomology insertions, increases with the genus of $\mathcal{M}_{_{g,n}}$, and decreases with the number of primitive insertions.   
\end{lemma}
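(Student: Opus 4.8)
The plan is to reduce the statement to a purely combinatorial count of completions of a fixed multicurve to a full pair of pants decomposition, and then to establish the two monotonicity claims by exhibiting injections between the relevant completion sets. First I would recall that a pair of pants decomposition of $\mathcal{M}_{_{g,n}}$ consists of exactly $3g-3+n$ disjoint, pairwise non-isotopic essential simple closed curves, cutting the surface into $2g-2+n$ pairs of pants. By the definition of $\{\mathcal{P}\}_{_{\mathcal{L}}}$ given in Section \ref{sec:3}, the lamination $\mathcal{L}$ replacing the primitive insertions is a fixed multicurve with $p$ components, one for each primitive cohomology insertion, and every member of $\{\mathcal{P}\}_{_{\mathcal{L}}}$ must contain $\mathcal{L}$. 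Hence counting the constituents of $\{\mathcal{P}\}_{_{\mathcal{L}}}$ is the same as counting the ways of completing the $p$ fixed curves by a further $3g-3+n-p$ curves, equivalently the number of pair of pants decompositions of the (possibly disconnected) cut surface $\mathcal{M}_{_{g,n}}\setminus\mathcal{L}$.

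For the dependence on the genus, with $n$ and $p$ held fixed, I would compare the completion sets for $\mathcal{M}_{_{g,n}}$ and $\mathcal{M}_{_{g+1,n}}$. Increasing $g$ by one raises the number of free curves $3g-3+n-p$ by three and enlarges each complementary piece, so I would construct an injection sending a decomposition of the lower-genus cut surface to one of the higher-genus surface by attaching a handle disjoint from $\mathcal{L}$ and completing it in a standard way; the decompositions that route curves through the new handle then lie outside the image, forcing a strict increase.

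For the dependence on the number of primitive insertions, with $g$ and $n$ fixed, passing from $\mathcal{L}$ with $p$ components to $\mathcal{L}'$ with $p+1$ components (obtained by promoting one of the free completing curves to a fixed one) restricts the admissible completions to the subset of those containing the extra curve. This gives a forgetful injection $\{\mathcal{P}\}_{_{\mathcal{L}'}}\hookrightarrow\{\mathcal{P}\}_{_{\mathcal{L}}}$ which is generically not surjective, since completions that do not contain the promoted curve are discarded; this is precisely the mechanism already noted in the proof of the analogous Lemma of Section \ref{sec:3}, namely that each additional fixed simple geodesic reduces the combinatorial rearrangement available within a decomposition. Monotone decrease in $p$ then follows.

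The main obstacle I expect is not the counting itself but pinning down what \emph{number of constituents} means so that the monotonicity is genuinely strict rather than merely non-strict: one must fix whether constituents are counted as unoriented topological types, as isotopy classes, or as orbits under the relevant subgroup of MCG$_{_{g,n}}$, and then rule out the low-complexity degeneracies (small $g$, or $p$ close to $3g-3+n$) in which distinct completions can become identified or in which no free curve survives to be promoted. Once the counting model is fixed, the two injections above, together with the identification of the discarded decompositions as a nonempty set in the generic range, deliver the claimed strict monotonic behaviour.
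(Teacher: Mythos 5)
Your argument is correct and lands on the same underlying combinatorics as the paper's proof, but the mechanism you use is different enough to be worth noting. The paper argues by induction on the Euler characteristic: it counts $2g+n-2$ pairs of pants and $3g+n-3$ decomposing curves, observes that the family $\{\mathcal{P}\}$ is generated by combinatorial rearrangements (A- and S-moves) of these pieces, and concludes that raising the genus enlarges $-\chi$ and hence the number of pieces available to rearrange, while fixing the curves of $\mathcal{L}$ forbids some of the A- and S-moves and so shrinks the family. You instead recast the count as the number of completions of the fixed multicurve $\mathcal{L}$ to a full pants decomposition, i.e.\ as pants decompositions of the cut surface, and prove both monotonicity claims by exhibiting explicit injections (handle attachment for the genus direction, the forgetful map $\{\mathcal{P}\}_{_{\mathcal{L}'}}\hookrightarrow\{\mathcal{P}\}_{_{\mathcal{L}}}$ for the insertion direction) together with an identification of the complement of the image. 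What your route buys is strictness and precision: the paper's appeal to forbidden moves does not by itself rule out that the forbidden moves were redundant, whereas your non-surjective injections do, and your insistence on fixing the counting model (topological types versus isotopy classes versus MCG-orbits) addresses a genuine ambiguity the paper leaves open --- as stated, the set of isotopy classes of pants decompositions is infinite, so the lemma is only meaningful after passing to orbits or topological types, which neither the paper's proof nor its appeal to A- and S-moves makes explicit. One small discrepancy: the paper fixes $2k$ curves for $k$ insertions where you fix $p$ curves for $p$ insertions; this does not affect the monotonicity argument but you should check which convention the surrounding text intends.
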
   
\begin{proof}  By induction.  The number of pairs of pants is 
\begin{equation}  
-\chi\left(\mathcal{M}_{_{g,n}}\right)\ =\ 2g+n-2  \nonumber
\end{equation}     
and they can be combinatorially rearranged by means of A- and S- moves, thereby giving rise to a family of possible decompositions, $\{\mathcal{P}\}$.
If $g^{^{\prime}}>g$, then  
\begin{equation}  
-\chi\left(\mathcal{M}_{_{g^{^{\prime}},n}}\right)\ >\ -\chi\left(\mathcal{M}_{_{g,n}}\right)  
\end{equation}  
therefore the number of constituents in a given decomposition increases with the genus. The number of simple closed curves, instead, reads  
\begin{equation}  
3g+n-3.  \nonumber
\end{equation} 
Upon fixing $2k$ simple closed curves, $\mathcal{L}$, we are left with fewer combinations of pair of pants decompositions, where some of the A and S moves will be forbidden due to the obstruction placed by the $2k$ fixed curves, $\{\mathcal{P}\}_{_{\mathcal{L}}}$.
\end{proof}

\section{\textbf{Unstable homotopy theory}}  \label{sec:6}    
The original motivation of the author to investigate such invariants from an alternative mathematical perspective was driven by the need to better understand the homological mirror symmetry correspondence, where GW-invariants play a crucial role. Among the major advancements in the  field of symplectig geometry in the past decade was the introduction of shifted symplectic structures as a very promising candidate towards generalising notions such as Lagrangian intersections, \cite{PTVV}. We thereby open this concluding Section by briefly overviewing a concrete example where this language arises in the specific context of the calculation of invariants for complete intersections in projective space, \cite{KKS}, explaining how it relates to our findings. 
\subsection{Obstruction bundle} 
Examples of symmetric obstruction theories (SOTs) are the following:  
\begin{itemize}  
\item The critical point of a regular function on a smooth variety. In the present work, the function is question is $f_{_{\text{sys}}}$.   
\item The intersection of Lagrangian submanifolds of a complex symeplectic manifold.  
\end{itemize}    
Both examples are clearly pertinent to Homological Mirror Symmetry applications, and, indeed, we will briefly comment in this regard later on in this Section.

We start from briefly overviewing the geometric setup of \cite{KKS}. (As a side comment, we add that the analysis of \cite{KKS} was mostly driven by a different question from ours, namely that of addressing the question of S-duality arising in Superstring Theory.) Given a smooth projective CY3-variety, $X$. Assuming Pic$(X)$ is generated by an ample divisor $L$. For a fixed integer $k\ \in\ \mathbb{Z}_{_{>0}}$, $H\ \in\ |kL|$, we get a sheaf of $H$-modules over the $H$-hypersurface, whose fiber jumps in dimension upon crossing $H$.   
We need to define the moduli space of semistable sheaves   
\begin{equation}   
\mathcal{M}\left(X, \text{Ch}^{^{\bullet}}\right),    
\label{eq:mch}
\end{equation}  
where $\text{Ch}^{^{\bullet}}$ denotes the \emph{Chow homology}. The calculation of the moduli space \eqref{eq:mch} is usually quite involved. Let $\ell$ be the generator of 
\begin{equation}    
H^{^4}(X;\mathbb{Z})\ \simeq\ \mathbb{Z}.  \nonumber
\end{equation}   
By Poincare' duality, we can define $H^{^2}$.
Given a pair of fixed integers  $i,n\in\mathbb{Z}$, the Chow homology reads as follows  
\begin{equation}  
\text{Ch}^{^{\bullet}}(i,n)\ =\ \left(0,[H], \frac{H^{^2}}{2}-i\ell,\chi(\mathcal{O}_{_H})-H\frac{\text{tod}(X)}{2}-n\right). \nonumber 
\end{equation}   
Semi-stability usually implies that the moduli space only deforms in the linear system. However, in most instances, \eqref{eq:mch} is not smooth. If they were, deformation of the sheaf would give the whole bundle. To circumvent this issue, we need to embed the moduli space into a smoothing 
\begin{equation}   
\mathcal{M}\ \hookrightarrow\ \mathcal{A}_{_{smooth}}  
\end{equation}  
In principle, the smoothing features infinite cohomologies. However, we can identify a map, $\phi$, between the cotangent structures, with the target admitting at most two nontrivial cohomologies  
\begin{equation} 
\mathbb{E}^{^{\times}}\ \xrightarrow{\phi}\  \mathbb{A}.  
\end{equation}  

Subsequently, one can take a point $p\in\mathcal{M}\left(X, \text{Ch}^{^{\bullet}}\right)$ such that

\begin{equation}  
\text{rank}\left(\mathbb{E}^{^{\times}}\right)\bigg|_{_p}\ =\ \text{Vdim}_{_{\mathbb{C}}}\left(\mathcal{M}\left(X, \text{Ch}^{^{\bullet}}\right)\bigg|_{_p}\right)  .   
\label{eq:rankE}  
\end{equation}   
When $X$ is Calabi-Yau, \eqref{eq:rankE} is trivial. The neighbourhood of the selected point in the singular moduli space, $R$, can therefore be plotted in terms of Ext$^{^1}(F,F)$ and Ext$^{^2}(F,F)$, defining the obstruction bundle shown in the figure below, with $F$ denoting a class associated to the point $p$   
\begin{equation}  
[F]\ =\ p\ \in\mathcal{M}\left(X, \text{Ch}^{^{\bullet}}\right)  
\end{equation}  
\begin{figure}[ht!]    
\begin{center}
\includegraphics[scale=0.5]{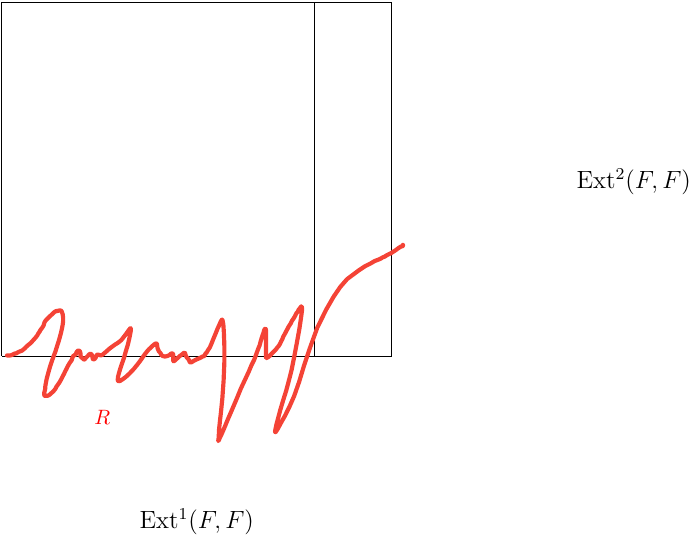}   
\caption{\small Sample sketch of an obstruction bundle for smoothing a singular moduli space. The notation might be slightly deceiving. $R$ actually only denotes the intersection points of the red curve with the axes. Teh red curve corresponds to the deformation of a neighbourhood of the point class of interest, $[F]=p$, under linear perturbation.}
\label{fig:obstrbun} 
\end{center}
\end{figure} 
To the neighbourhood of the deformation, $R$, there is an assigned cycle, $[R]$, which, in turn can be decomposed as follows  
\begin{equation}  
[R]\ =\ \text{number}\ \cdot\ [\text{pt}],    \nonumber
\label{eq:classR}   
\end{equation}  
which, as a class, should be invariant under deformation. Just to define the notation in \eqref{eq:classR}, [pt] corresponds to the point class, whereas the prefactor are nothing but the intersection numbers featuring in Gromov-Witten invariants. Setting some further notation, 
\begin{equation}   
\text{deg} [R] \ =\ [M]^{^{\text{vir}}},  \nonumber
\end{equation}  
we can therefore define the DT-invariants associated to the moduli space in question, which reads as follows  
\begin{equation}  
\text{DT}\left(X, \text{Ch}^{^{\bullet}}\right)\ =\ \int_{_{[M]^{^{\text{vir}}}}}1\ =\ 1\cap[M]^{^{\text{vir}}},    \nonumber  
\end{equation}  
The crucial point to observe here is the role played by the choice of the point class in \eqref{eq:rankE}. From Section \ref{sec:1}, we know that the following chain of equalities holds independently of the choice of the point class  
\begin{equation}   
\begin{aligned}   
\text{dim}\mathcal{P}_{_{g,n}}\ =\ \text{dim}\Phi\left(\mathcal{T}_{_{g,n}}\right)\ &=\ \text{VCohdim}\ \text{Stab}_{_{\text{MCG}}}\ \left(\ell_{_{X_{_d}}}^{^{\text{hyp}}}\right) \\
&=\ \text{VCohdim}\ \text{MCG}
\end{aligned}
\label{eq:strict}
\end{equation}
for any  
\begin{equation}  
p\ \in\ \mathcal{M}\left(X, \text{Ch}^{^{\bullet}}\right)  \nonumber
\end{equation}
However, in the context of complete intersections in projective space, \eqref{eq:strict} is too restrictive, and we need to turn to the generalised systole function, instead, which carries much more richness from the perspective of geometric topology.

\subsection{Monodromies not preserving the symplectic form}  

We now start gathering most of the results achieved in the previous Sections. Let
\begin{equation}  
\mathcal{U}\ \subset\ \prod_{_{i=1}}^{^r}\ \mathbb{P}\left(H^{^0}\left(\mathbb{P}^{^{m+r}},\mathcal{O}\left(d_{_i}\right)\right)\right) \nonumber
\end{equation}  
be an open subset parameterising smooth $m$-dimensional complete intersections of degrees $\left(d_{_1},...,d_{_r}\right)$ in $\mathbb{P}^{^{m+r}}$.  
Fixing a point $u\ \in\ \mathcal{U}$, and denoting by $X$ the corresponding smooth complete intersection in $\mathbb{P}^{^{m+r}}$. 
A class $\gamma\ \in\ H^{^{\bullet}}(X)$ is simple if it lies in the image of $H^{^{\bullet}}\left(\mathbb{P}^{^{m+r}}\right)$and primitive if $\gamma\ \in\ H^{^m}(X)_{_{\text{primt}}}$. The fundamental group $\pi_{_1}(\mathcal{U}, u)$ acts on the primitive cohomologies, $H^{^m}(X)_{_{\text{primt}}}$. The monodromy group therefore reads as follows  
\begin{equation}  
G\ \overset{def.}{=}\ \overline{\Psi(\pi_{_1}(\mathcal{U},u))}  \nonumber
\end{equation}   
where 
\begin{equation}  
\Psi:\ \pi_{_1}(\mathcal{U},u)\ \longrightarrow\ \text{Aut}\left(H^{^{\bullet}}(X)_{_{\text{primt}}}\otimes\mathbb{C}\right). 
\end{equation}  
This is true for Case 1, \ref{Word:case1}, namely for the case in which the monodromy completely determines the primitive cohomologies, \cite{ABPZ}.
From previous work by\cite{LMP}, we have the following Lemma, essentially providing the definition of Zariski closure of a group  

\begin{lemma}[LMP]  
Let $\Gamma$ be a finitely generated group, $G$ a omplex algebraic group, and $\Delta_{_o}$  a finitely-generated subgroup of $\Gamma$. If $\Delta_{_o}$ is strongly distinguished be a representation $\rho_{_o}\in\text{Hom}(\Gamma, G)$, then there exists a representation, $\Psi$, such that   
\begin{equation}  
\Psi(\Gamma)\ \cap\ \overline{\Psi\left(\Delta_{_o}\right)}\ =\ \Psi\left(\Delta_{_o}\right).  
\end{equation}  

\end{lemma}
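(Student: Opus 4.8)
The plan is to read the conclusion as a \emph{separability} statement. Since the inclusion $\Psi(\Gamma)\cap\overline{\Psi(\Delta_{_o})}\supseteq\Psi(\Delta_{_o})$ is automatic, the entire content lies in ruling out that any element of $\Gamma\setminus\Delta_{_o}$ is carried by $\Psi$ into the Zariski closure of $\Psi(\Delta_{_o})$. I would therefore begin by unpacking the hypothesis that $\Delta_{_o}$ is strongly distinguished by $\rho_{_o}$: concretely, this supplies a representation $\rho_{_o}\in\text{Hom}(\Gamma,G)$ for which the preimage $\rho_{_o}^{-1}(\overline{\rho_{_o}(\Delta_{_o})})$ meets $\Gamma$ in exactly $\Delta_{_o}$. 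The first step is to record that $\overline{\rho_{_o}(\Delta_{_o})}$ is genuinely a Zariski-closed algebraic subgroup of $\overline{\rho_{_o}(\Gamma)}$, since the closure of a subgroup of a complex algebraic group is again an algebraic subgroup, so that the preimage description is well posed.

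Second, I would construct $\Psi$ from $\rho_{_o}$. If $\rho_{_o}$ already separates every coset of $\Delta_{_o}$, one may simply take $\Psi=\rho_{_o}$ and the conclusion follows immediately from the preimage identity. In general the kernel of $\rho_{_o}$ may allow finitely many cosets of $\Delta_{_o}$ to be absorbed into the closure, so I would pass to a fibre product $\Psi=\rho_{_o}\times\rho_{_1}\times\cdots\times\rho_{_m}$, with the auxiliary $\rho_{_j}$ chosen to separate precisely those offending cosets. Here the finite generation of both $\Gamma$ and $\Delta_{_o}$ is essential: it guarantees that only finitely many coset representatives need be controlled, so a finite product suffices, and the preimage identity for the first factor is inherited by $\Psi$.

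Third, I would verify the closure identity for this $\Psi$. Using that $\overline{\Psi(\Delta_{_o})}$ is an algebraic subgroup and that the membership $\Psi(\gamma)\in\overline{\Psi(\Delta_{_o})}$ forces $\gamma$ into the preimage $\Psi^{-1}(\overline{\Psi(\Delta_{_o})})=\Delta_{_o}$ by the separating property of the added factors, one concludes $\Psi(\gamma)\in\Psi(\Delta_{_o})$. This is exactly the remaining inclusion $\Psi(\Gamma)\cap\overline{\Psi(\Delta_{_o})}\subseteq\Psi(\Delta_{_o})$, which together with the automatic inclusion yields the stated equality.

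I expect the main obstacle to be controlling the Zariski closure itself, namely certifying that the finitely many auxiliary factors genuinely prevent $\overline{\Psi(\Delta_{_o})}$ from swallowing any extra rational point of $\Psi(\Gamma)$. This is where the strong distinguishing hypothesis must be propagated through the fibre product: one needs a dimension count, or equivalently a Noetherian descending-chain argument, inside the representation variety $\text{Hom}(\Gamma,G)$ to show that the separation achieved at the level of the distinguished subgroup is not destroyed upon taking closures, and that the resulting $\Psi$ lands in the constructible locus of representations for which $\Psi^{-1}(\overline{\Psi(\Delta_{_o})})$ equals $\Delta_{_o}$ exactly.
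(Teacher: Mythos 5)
First, note that the paper itself offers no proof of this lemma: it is imported verbatim from \cite{LMP}, prefaced only by the remark that it is ``essentially providing the definition of Zariski closure of a group,'' so there is no internal argument to compare yours against. Judged on its own terms, your opening move is the right one and is in fact the whole content of the statement under the standard reading of the hypothesis: if ``$\Delta_{_o}$ is strongly distinguished by $\rho_{_o}$'' means (as in \cite{LMP}) that $\rho_{_o}(\Gamma)\cap\overline{\rho_{_o}(\Delta_{_o})}=\rho_{_o}(\Delta_{_o})$ together with the preimage condition $\rho_{_o}^{-1}(\rho_{_o}(\Delta_{_o}))=\Delta_{_o}$, then taking $\Psi=\rho_{_o}$ immediately yields the asserted equality, and the proof should end there.

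The genuine gap is in your ``general case.'' You pass to a fibre product $\Psi=\rho_{_o}\times\rho_{_1}\times\cdots\times\rho_{_m}$ and justify the finiteness of the product by claiming that finite generation of $\Gamma$ and $\Delta_{_o}$ ``guarantees that only finitely many coset representatives need be controlled.'' That inference is false: finite generation bounds generating sets, not the index $[\Gamma:\Delta_{_o}]$, which is typically infinite in the situations this lemma is meant for, so there is no a priori reason a finite product of auxiliary representations suffices. Moreover, the existence of representations $\rho_{_j}$ ``separating precisely the offending cosets'' is asserted rather than constructed; that existence is exactly the nontrivial separability content of \cite{LMP}, which is obtained there by reducing the defining equations of the Zariski closure over a finitely generated ring to finite quotients, not by a Noetherian descending-chain argument inside $\text{Hom}(\Gamma,G)$. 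Your closing paragraph flags this obstacle but does not close it. The repair is simply to delete the general case: under the hypothesis as stated, $\Psi=\rho_{_o}$ works outright, and everything beyond that belongs to the (genuinely harder) separability theorem of \cite{LMP}, not to this lemma.
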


For the purpose of our work, $\Gamma\equiv$ MCG$_{_{g,n}}$, which is finitely-generated by the Dehn twists on simple closed curves constituting a lamination of $\mathcal{M}_{_{g,n}}$.
\begin{proposition}   The action of primitive cohomologies on the symplectic form descend from the lack of Zariski closure of the subgroups associated to the two different factors in $\mathcal{M}_{_{g,n}}$.
\end{proposition}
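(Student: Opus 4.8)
The plan is to read the statement through the factorisation of $\text{MCG}^{^p}_{_{g,n}}$ into the two surface subgroups $\text{MCG}^{^{(1)}}_{_{g_{_1},n_{_1}}}$ and $\text{MCG}^{^{(2)}}_{_{g_{_2},n_{_2}}}$, and to identify each factor with a finitely-generated subgroup $\Delta_{_o}\ \le\ \Gamma\equiv\text{MCG}_{_{g,n}}$ to which the LMP Lemma applies through the monodromy representation. First I would fix $u\in\mathcal{U}$ and the representation $\Psi:\ \pi_{_1}(\mathcal{U},u)\ \longrightarrow\ \text{Aut}\left(H^{^{\bullet}}(X)_{_{\text{primt}}}\otimes\mathbb{C}\right)$, and record that the subgroup attached to one side of the factorisation is the image of the gluing data along the interface lamination, i.e. the fundamental germ of $\mathcal{L}$ lying in $\text{Stab}_{_{\text{MCG}_{_{g,n}}}}\left(\ell_{_{\mathcal{M}}}^{^{[F]}}\right)$.

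The core of the argument is a dichotomy governed by the LMP equality
\begin{equation}
\Psi(\Gamma)\ \cap\ \overline{\Psi\left(\Delta_{_o}\right)}\ =\ \Psi\left(\Delta_{_o}\right). \nonumber
\end{equation}
In Case 1, \ref{Word:case1}, the monodromy saturates the map \eqref{eq:monbeg}, so $\Delta_{_o}$ is strongly distinguished and the equality holds: the image of the factor subgroup coincides with its Zariski closure inside $\Psi(\Gamma)$, the primitive insertions are completely determined by monodromy, and the symplectic form carries no dependence on the node data. I would match this with the observation from Section \ref{sec:5} that $\text{ev}^{^*}(\alpha_{_i})$ is then independent of the internal edges, which is exactly the saturation limit.

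Conversely, in Case 2, \ref{Word:case2}, the symplectic form acquires an explicit dependence $\alpha_{_i}=\alpha_{_i}(h)$. The plan is to prove that this dependence is equivalent to the strict inclusion
\begin{equation}
\Psi\left(\Delta_{_o}\right)\ \subsetneq\ \Psi(\Gamma)\ \cap\ \overline{\Psi\left(\Delta_{_o}\right)}, \nonumber
\end{equation}
i.e. to the failure of the LMP equality, which is precisely the lack of Zariski closure of the factor subgroup. I would route this through the earlier equivalence asserting that $\text{MCG}^{^p}_{_{g,n}}$ is non-separable exactly when the simple cohomologies depend on the nodes; since non-separability of a finitely-generated subgroup is the standard obstruction to its being closed in the profinite topology, the two assertions are made to coincide.

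The hard part will be making rigorous the passage from the algebraic statement about Zariski closure to the geometric statement that primitive cohomology acts on the symplectic form. This requires tracking how the jump of $PH^{^{\bullet}}_{_{\pm}}(X,\omega)$ across a node is reflected in the distortion of $\Delta_{_o}$ inside $\Gamma$: concretely, I would use that the preferred lamination $\mathcal{L}$ lies in the kernel of the map to $\text{Aut}\left(\pi_{_3}\left(X_{_d}\right),\lambda\right)$, so that the two factors can only fail to glue in a Zariski-closed fashion through a class in $H^{^3}\left(X_{_d};\mathbb{Z}/2\right)$, the torsor measuring the variation of $\theta^{^{\text{hyp}}}$-structures. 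Showing that this torsor is nontrivial precisely when the LMP equality fails would close the identification, and is the step I expect to be most delicate.
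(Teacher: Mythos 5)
Your proposal follows essentially the same route as the paper: the paper's own proof is a single assertion that when primitive cohomologies affect the symplectic form the group $\Gamma$ is no longer separable because there exist finitely-generated subgroups that are not Zariski-closed, one for each node of the graph dressing the moduli space — which is exactly the logical skeleton of your Case 1 / Case 2 dichotomy built on the LMP equality $\Psi(\Gamma)\cap\overline{\Psi(\Delta_{_o})}=\Psi(\Delta_{_o})$. Your version is considerably more detailed (the explicit invocation of the LMP Lemma, the link to non-separability, and the $H^{^3}\left(X_{_d};\mathbb{Z}/2\right)$ torsor), and the step you flag as most delicate — passing from failure of Zariski closure to the geometric action on the symplectic form — is precisely the step the paper's proof leaves entirely unjustified.
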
  
\begin{proof}  In presence of primitive cohomologies affecting the symplectic form, the group, $\Gamma$, is no longer separable, since there are finitely-generated subgroups that are not Zariski-closed. Their number corresponds to the number of nodes in the graph dressing the moduli space.
\end{proof}

\subsection{Zariski closure and subgroup separability}  

To find the Zariski closure of a set, one can consider the set of all points that satisfy the polynomial defining the ideal. E.g., given an ideal generated by the polynomials, the closure will include all points in the affine space that are common roots to these polynomials. 

In general, the Zariski topology is characterised by its coarse nature, where closed sets are larger with respect to the standard topology, making the closure of an ideal encompass a significant portion of the space.

The algebraic monodromy group is defined by the Zariski closure of the image of $\pi_{_1}(U,u)$ in the algebraic group   
\begin{equation}  
\text{Aut}\left(H^{^{\bullet}}(X)_{_{\text{prim}}}\otimes\mathbb{C}\right) \nonumber 
\end{equation}  

The explicit monodromy group $G$ depends on the parity of dim$(X)$, as already addressed by \cite{ABPZ}. Specifically, for odd dimensions, it is $SP(V)$, whereas in the even-dimensional case, it is $O(V)$. 

\begin{theorem}\label{th:1}   MCG is non-separable if the simple cohomologies depend on the nodes (namely if the symplectic form depends on the primitive cohomology insertion).

\end{theorem}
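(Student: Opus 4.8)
The plan is to establish non-separability by producing explicit finitely-generated subgroups of $\text{MCG}_{g,n}$ that fail to be closed in the profinite topology, and to locate the obstruction precisely in the edge-dependence $\alpha_i(h)$ of \textbf{Case 2}. I would work through the monodromy representation $\Psi:\pi_1(\mathcal{U},u)\to\text{Aut}\left(H^{\bullet}(X)_{\text{prim}}\otimes\mathbb{C}\right)$ and its algebraic monodromy group $G=\overline{\Psi(\pi_1(\mathcal{U},u))}$, which by the parity discussion is $SP(V)$ in the odd-dimensional case and $O(V)$ in the even-dimensional one. To each node of the graph $\Gamma$ dressing $\mathcal{M}_{g,n}$ I attach the finitely-generated subgroup $\Delta_i<\Gamma\equiv\text{MCG}_{g,n}$ generated by the Dehn twists about the simple closed curves of the preferred lamination $\mathcal{L}$ meeting that node.

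First I would treat \textbf{Case 1} as a calibration: there the primitive cohomologies are completely determined by the monodromy, so each $\Delta_i$ is strongly distinguished by $\Psi$, the hypotheses of the LMP Lemma are met, and $\Psi(\Gamma)\cap\overline{\Psi(\Delta_i)}=\Psi(\Delta_i)$. This says exactly that the $\Delta_i$ are Zariski-closed in $\Psi(\Gamma)$, and under the correspondence between the profinite topology on $\Gamma$ and the congruence/Zariski topology carried by $\Psi$ it yields separability. The content of the theorem is that this collapses in \textbf{Case 2}.

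Next, the heart of the argument. When the simple cohomologies depend on the nodes, the insertions $\alpha_i(h)$ acquire an explicit dependence on the internal edges $h$, so the symplectic form varies with the primitive data and the monodromy no longer factorises cleanly across the nodes. I would show this prevents any representation from strongly distinguishing the $\Delta_i$: the Proposition immediately preceding this theorem already records that such a $\Gamma$ contains finitely-generated subgroups that are not Zariski-closed, one for each node. For such a $\Delta_i$ one has $\Psi(\Gamma)\cap\overline{\Psi(\Delta_i)}\supsetneq\Psi(\Delta_i)$, so the conclusion of the LMP Lemma fails, and by the contrapositive of that Lemma $\Delta_i$ cannot be strongly distinguished by any $\rho_o$. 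Translating back through the profinite topology, $\Delta_i$ is not an intersection of finite-index subgroups, hence not separable, and therefore $\text{MCG}_{g,n}$ is non-separable; the count of non-closed subgroups reproduces the number of nodes of $\Gamma$.

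The main obstacle is the bridge between the algebraic-group statement (failure of $\Psi(\Gamma)\cap\overline{\Psi(\Delta_i)}=\Psi(\Delta_i)$ in the Zariski topology) and the group-theoretic statement of non-separability (failure of closedness in the profinite topology). These topologies are a priori distinct, and making the passage rigorous requires a strong-approximation type input controlling how finite quotients of $\Gamma$ see $G$, together with a genuine verification that the edge-dependence of $\alpha_i(h)$ produces an honest excess $\overline{\Psi(\Delta_i)}\cap\Psi(\Gamma)\setminus\Psi(\Delta_i)$ rather than a Zariski closure that happens to meet $\Psi(\Gamma)$ in $\Psi(\Delta_i)$ regardless. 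Establishing that excess — that the extra automorphisms of the primitive cohomology forced by the $h$-dependence actually lie in $\Psi(\Gamma)$ — is where the dependence of the symplectic form on the primitive insertions must be used in an essential, rather than merely formal, way.
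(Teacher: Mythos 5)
Your proposal is far more elaborate than what the paper actually offers: the paper's entire proof is the single sentence ``This follows from the fact that every finitely-generated subgroup of $\mathrm{MCG}_{g,n}^{p}$ is open in the profinite topology.'' Read literally, that sentence points in the \emph{opposite} direction from the theorem --- a group all of whose finitely generated subgroups are open (hence closed) in the profinite topology is precisely a subgroup-separable group --- so the paper gives you essentially nothing to match against, and your attempt to build a real argument through the LMP Lemma and the algebraic monodromy group is a reasonable instinct. The same one-line justification is reused verbatim for an unrelated statement about intrinsic and extrinsic distances elsewhere in Section 5, which reinforces that it is a placeholder rather than an argument.

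That said, your argument has three genuine gaps, only one of which you flag. First, circularity: the Proposition you invoke as recording that ``such a $\Gamma$ contains finitely-generated subgroups that are not Zariski-closed'' is itself justified in the paper by \emph{asserting} that $\Gamma$ is no longer separable; you cannot feed that back in as an input to prove non-separability. Second, the LMP Lemma is existential over representations --- it says that if $\Delta_o$ is strongly distinguished by \emph{some} $\rho_o$ then \emph{there exists} a $\Psi$ with $\Psi(\Gamma)\cap\overline{\Psi(\Delta_o)}=\Psi(\Delta_o)$ --- so exhibiting one particular $\Psi$ (the monodromy representation) for which the intersection is strictly larger does not negate the Lemma's conclusion and does not license the contrapositive you apply; you would need the failure for \emph{every} representation. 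Third, the gap you do acknowledge is fatal as stated: in the LMP framework the implication runs from Zariski-closure conditions \emph{to} separability of the subgroup, and the converse (failure of a Zariski condition for some representation implies profinite non-closedness) is false in general without a strong-approximation statement tying finite quotients of $\mathrm{MCG}_{g,n}$ to the algebraic group $SP(V)$ or $O(V)$. Until that bridge and the ``honest excess'' in $\overline{\Psi(\Delta_i)}\cap\Psi(\Gamma)$ are actually established from the $h$-dependence of the insertions $\alpha_i(h)$, the proposal is a plausible programme rather than a proof --- though, to be fair, the paper itself supplies neither.
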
  

\begin{proof}  This follows from the fact that every finitely-generated subgroup of MCG$_{_{g,n}}^{^p}$ is open in the profinite topology.

\end{proof}

\subsection{A geometric group theory perspective} 

As previously observed by \cite{RW}, for complete intersections in projective space, the image of the group homomorphism  
\begin{equation}  
\alpha:\ \text{Mon}_{_d}\ \longrightarrow\ \text{MCG}_{_d}  
\label{eq:alpha}
\end{equation}  
features an upper and a lower bound. In terms of the notation adopted by \cite{RW}, these subgroups read  
$\text{Stab}_{_{\text{MCG}_{_d}}}\left(\ell_{_{X_{_d}}}^{^{\text{hyp}}}\right)$, and $\text{Im}\left(\Phi:\ \Theta_{_7}\ \longrightarrow\ K_{_d}\right)$, respectively, with the latter surjecting onto $\text{Aut}\left(\pi_{_3}\left(X_{_d}\right),\lambda,\mu\right)$.   

\begin{itemize}  

\item  The upper bound corresponds to the minimal distance between any two points in MCG$_{_d}$ can be achieved by making use of the generators of one of its subgroups, image of the monodromy map. 

\item The lower bound corresponds to the maximal distance between any two points in MCG$_{_d}$ achieved by making use of the generators of one of its subgroups, when the word problem can be solved.  

\end{itemize}  

Given the context of our analysis, we are therefore brought to pose the following

\textbf{Question:} How do primitive cohomologies enter in this upper versus lower bound? Can we get any further interesting geometric topology insight?  

Primitive cohomologies feature in the data providing lower bound to the map \eqref{eq:alpha}. $\mu$ is Mon$_{_d}$-independent, corresponding to the assumption of independence between the symplectic form and the primitive cohomologies. $\mu$ corresponds to the primitive cohomologies, since it corresponds to the data arising from the embedding $X_{_d}\subset\mathbb{CP}^{^4}$, where $\lambda$ is treated as the symplectic form on the space $\pi_{_3}\left(X_{_d}\right)$. Essentially, primitive cohomologies are repsonsible for increasing the distance between two points in MCG$_{_d}$. The path can be shortened once the generators of $\text{Aut}\left(\pi_{_3}\left(X_{_d}\right),\lambda,\mu\right)$ can be completely rewritten in terms of those of $\text{Aut}\left(\pi_{_3}\left(X_{_d}\right),\lambda\right)$.

\begin{theorem}   

The following statements are equivalent:

\begin{itemize}  

\item MCG is non-separable if the simple cohomologies depend on the nodes (namely if the symplectic form depends on the primitive cohomology insertion).

\item  Identifying MCG$_{_{g,n}}$ for $\mathcal{M}_{_{g,n}}^{^{\Gamma}}$ with primitive cohomology insertions is an example of an unsolvable word problem, featuring a nonrecursive distorsion for subgroups associated to each node of the graph, $\Gamma$, dressing the moduli space.

\end{itemize}    

\end{theorem}

\begin{proof}  This follows from Theorem \ref{th:1} and Theorem \ref{th:2}.
\end{proof}

\begin{definition}
    Let  
    \begin{equation}  
    h:\ X\ \rightarrow\ X  \nonumber
    \end{equation}  
    be a continuous map on a compact metric space, $X$. The topological entropy, $h_{_{\text{top}}}(f)$, is defined as follows  
    \begin{equation}  
    h(f)\ =\ \underset{\epsilon\rightarrow0}{\lim}\ \underset{n\rightarrow\infty}{\lim\text{sup}}\ \frac{\log(N(\epsilon,n))}{n}  ,   
    \end{equation}   
where $N(\epsilon,n)$ denotes the minimal number of open sets of the form $B_{_{\epsilon}}^{^n}(x_{_i})$, $x_{_i}\in X$, covering $X$.

\end{definition}

\begin{theorem}  [Gromov, Yomdin] 
M a compact K$\ddot{\text{a}}$hler manifold, with a surjective holomorphic map  
\begin{equation}  
f:\ M\ \longrightarrow\ M.  \nonumber
\end{equation}  
Then,  the topological entropy reads  
\begin{equation} 
h_{_{\text{TOP}}}(f)\ =\ \log\ \rho\left(f^{^{\bullet}}\right),  \end{equation}  
where $\rho\left(h^{^*}\right)$ denotes  the spectral radius of   
\begin{equation}  
f^{^{\bullet}}:\ H^{^{\bullet}}\left(M;\mathbb{C}\right)\ \longrightarrow\ H^{^{\bullet}}\left(M;\mathbb{C}\right)  \end{equation}
\end{theorem}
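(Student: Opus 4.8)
The plan is to establish the two inequalities $h_{\text{TOP}}(f)\le \log\rho(f^\bullet)$ and $h_{\text{TOP}}(f)\ge \log\rho(f^\bullet)$ separately: the first is Gromov's upper bound and the second is Yomdin's lower bound. As a preliminary reduction I would first replace the spectral radius on the full cohomology by the dynamical degrees. Since $f$ is holomorphic and $M$ is Kähler, $f^\bullet$ preserves the Hodge bigrading, and by the Khovanskii--Teissier mixed inequalities (equivalently, the log-concavity $\lambda_p^2\ge \lambda_{p-1}\lambda_{p+1}$ of the degrees, together with the Dinh--Sibony bound on the off-diagonal pieces) the spectral radius is attained on one of the diagonal summands $H^{p,p}(M;\mathbb{C})$. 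Thus $\rho(f^\bullet)=\max_{0\le p\le \dim_{\mathbb{C}}M}\lambda_p(f)$, where $\lambda_p(f)$ is the spectral radius of $f^*$ on $H^{p,p}$; functoriality $(f^n)^*=(f^*)^n$ guarantees the Gelfand limit $\lim_n \|(f^*)^n\|^{1/n}=\rho(f^\bullet)$ exists.

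For the upper bound I would invoke Gromov's estimate that $h_{\text{TOP}}(f)$ is controlled by the exponential growth rate of the Riemannian volume of the graphs $\Gamma_{f^n}\subset M\times M$ of the iterates. The decisive Kähler input is Wirtinger's theorem: each $\Gamma_{f^n}$ is a complex submanifold, so its volume equals the integral over it of a power of the product Kähler form $\pi_1^*\omega+\pi_2^*\omega$, a purely cohomological pairing. Expanding this binomially via the Künneth decomposition produces a sum of terms $\int_M (f^n)^*\omega^{\,p}\wedge\omega^{\,\dim_{\mathbb{C}}M-p}$, each growing exactly like $\lambda_p(f)^n$, so $\mathrm{vol}(\Gamma_{f^n})$ grows like $\rho(f^\bullet)^n$ and $h_{\text{TOP}}(f)\le\log\rho(f^\bullet)$.

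For the lower bound I would rely on Yomdin's theorem on volume growth and entropy for $C^\infty$ maps, which bounds $h_{\text{TOP}}(f)$ from below by the exponential growth rate of $\mathrm{vol}(f^n|_\Sigma)$ for a smooth submanifold $\Sigma$ of any dimension. Choosing $p$ to realize $\max_p\lambda_p$, I would take a generic $C^\infty$ (e.g.\ smooth algebraic) $p$-dimensional $\Sigma$ whose homology class is not orthogonal to the leading eigendirection of $f^*$ on $H^{p,p}$. By Wirtinger again, $\mathrm{vol}(f^n|_\Sigma)=\tfrac{1}{p!}\int_\Sigma (f^n)^*\omega^{\,p}$ equals the pairing $\langle [\Sigma],(f^*)^n\omega^p\rangle$, which grows like $\lambda_p(f)^n$, so Yomdin's inequality gives $h_{\text{TOP}}(f)\ge\log\lambda_p(f)=\log\rho(f^\bullet)$, completing the proof.

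The hard part will be Yomdin's lower bound. Unlike the upper bound, which is essentially soft once Wirtinger's identity is in hand, Yomdin's theorem requires delicate semialgebraic and subanalytic geometry---in particular the reparametrization lemma controlling the $C^r$-norms of the restrictions of $f^n$ to small cubes---in order to convert local volume growth into exponentially many $(n,\epsilon)$-separated orbits; the $C^\infty$ hypothesis is essential (the bound fails in finite smoothness), and it is the combination with Newhouse's complementary estimate that pins $h_{\text{TOP}}(f)$ exactly to the volume growth rate. I would treat this step by citing the Yomdin and Newhouse results directly rather than reproducing the subanalytic machinery, and then verify carefully that the Kähler cohomological computation feeds the correct growth rate $\lambda_p(f)$ into both ends of the sandwich.
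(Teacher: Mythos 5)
The paper does not prove this statement at all: it is quoted verbatim as the classical Gromov--Yomdin theorem and used as a black box (its only role in the paper is as a foil for the Kikuta--Takahashi conjecture and Fan's counterexample discussed immediately afterwards). So there is no ``paper proof'' to compare against; what you have written is a reconstruction of the standard argument from the literature, and as an outline it is essentially correct. The reduction of $\rho(f^{\bullet})$ to $\max_p\lambda_p(f)$ on the diagonal Hodge pieces, Gromov's upper bound via Wirtinger's theorem applied to the graphs of the iterates, and Yomdin's $C^{\infty}$ lower bound are exactly the three ingredients of the actual proof, and you correctly identify Yomdin's reparametrization machinery as the genuinely hard step and propose to cite it rather than reprove it, which is reasonable.

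Two points you should tighten if you flesh this out. First, Gromov's entropy bound is in terms of the volume growth of the orbit graphs $\Gamma_n=\{(x,f(x),\dots,f^{n-1}(x))\}\subset M^{n}$, not the graphs $\Gamma_{f^n}\subset M\times M$ of the single iterates; the two growth rates coincide in the end, but the entropy inequality $h_{\mathrm{TOP}}(f)\le \mathrm{lov}(f)$ is proved for the former, so the binomial/K\"unneth expansion should be carried out on $M^{n}$. Second, in the lower bound you cannot in general choose a smooth $p$-dimensional submanifold $\Sigma$ whose class pairs nontrivially with the leading eigendirection, because $\omega$ need not be rational and the relevant eigenvector lives only in the nef-type cone of $H^{p,p}(M;\mathbb{R})$; the standard fix is to bound $\mathrm{vol}(f^n(\Sigma))\ge c\int_{\Sigma}(f^n)^{*}\omega^{p}$ for a suitable piece of $M$ and use the positivity argument (Khovanskii--Teissier, as in Dinh--Sibony) showing that $\int_M (f^n)^{*}\omega^{p}\wedge\omega^{\dim_{\mathbb C}M-p}$ is comparable to the operator norm of $(f^n)^{*}$ on $H^{p,p}$. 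With those adjustments your sandwich closes correctly.
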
  

\textbf{Conjecture} \cite{KT} $X$ smooth proper over $\mathbb{C}$, the topological entropy reads

\begin{equation}  
h_{_o}\left(\Phi\right)\ =\ \log\ \rho\left(HH_{_{\bullet}}\left(\Phi\right)\right),  
\end{equation}    
where   

\begin{equation}  
HH_{_{\bullet}}\left(\Phi\right):\ HH_{_{\bullet}}\left(X\right)\ \longrightarrow\ HH_{_{\bullet}}\left(X\right)  
\end{equation}
and $\rho\left(HH_{_{\bullet}}\left(\Phi\right)\right)$ its spectral radius.

\begin{proposition} [Fan]\cite{F} For every even integer $d\ge 4$, let $X$ be a CY hypersurface in $\mathbb{CP}^{^{n+1}}$ of degree $(d+2)$ and    

\begin{equation}  
\Phi\ =\ T_{_{{\mathcal{O}}_{_X}}}\ \circ\ \left(-\otimes\ \mathcal{O}(-1)\right)  
\end{equation}  

Then, its topological entropy is bounded from below as follows   

\begin{equation}  
h_{_o}(\Phi)\ >\ 0\ =\ \log\ \rho\left(\Phi_{_{H^{^{\bullet}}}}\right)  
\end{equation}  
thereby providing a counterexample of [K.,T.]'s Conjecture.

\cite{DHKK} showed that  the categorical entropy on the Fukaya category

\begin{equation}  
h_{_o}\left(D^{^b}\text{Fuk}(X)\right)\ =\ \log\lambda  
\end{equation}
 is always positive, since the stretch factor $\lambda>1$. Hence, from HMS arguments, we should expect to be able to find examples where \cite{KT} conjecture does not hold on $D^{^b}\text{Coh}(X)$.

\end{proposition}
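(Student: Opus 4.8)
The plan is to establish the two sides of the displayed relation independently: the cohomological computation $\log\rho(\Phi_{H^\bullet})=0$, and the strict positivity $h_o(\Phi)>0$. Together these exhibit a gap in the general lower bound $h_o(\Phi)\ge\log\rho(\Phi_{H^\bullet})$, so the Kikuta--Takahashi equality fails for $\Phi$.

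First I would compute the induced endomorphism $\Phi_\ast$ of $H^\bullet(X;\mathbb{C})$ through Mukai vectors. Tensoring by $\mathcal{O}(-1)$ acts as cup product with $\mathrm{ch}(\mathcal{O}(-1))=e^{-H}$; since $H$ is the restriction of the hyperplane class of $\mathbb{CP}^{n+1}$, cup product with $H$ is a morphism of Hodge structures of type $(1,1)$, so $e^{-H}$ preserves the Hodge filtration $F^\bullet$ and induces the identity on $\mathrm{gr}_F$. The spherical twist $T_{\mathcal{O}_X}$ acts as the reflection $v\mapsto v-\langle v,v_{\mathcal{O}_X}\rangle v_{\mathcal{O}_X}$ in $v_{\mathcal{O}_X}=\sqrt{\mathrm{td}(X)}\in\bigoplus_p H^{p,p}(X)$, hence preserves the whole Hodge decomposition and is an isometry of the Mukai pairing. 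Consequently $\Phi_\ast$ preserves $F^\bullet$ and the pairing, so its eigenvalues coincide with those of its $\mathrm{gr}_F$-action, which is a genuine Hodge isometry of a polarized Hodge structure. The group of such isometries is compact (by the Hodge--Riemann relations the polarization is sign-definite on each primitive $H^{p,q}$), so all eigenvalues have absolute value $1$; thus $\rho(\Phi_{H^\bullet})=1$, i.e.\ $\log\rho(\Phi_{H^\bullet})=0$, and under HKR the same holds for the action on $HH_\bullet(X)$.

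For positivity I would use the DHKK formula $h_o(\Phi)=\lim_n\frac1n\log\sum_k\dim\mathrm{Hom}(G,\Phi^nG[k])$ for a split-generator, taking $G=\mathcal{O}_X$. The crucial point is that the \emph{signed} count $\chi(G,\Phi^nG)=\langle[G],\Phi_\ast^n[G]\rangle$ only sees the cohomological action and so grows sub-exponentially, whereas the \emph{unsigned} total dimension can grow exponentially through contributions in complementary degrees that cancel in the Euler characteristic. Concretely I would set $a_n^{(k)}=\dim\mathrm{Hom}(\mathcal{O}_X,\Phi^n\mathcal{O}_X[k])$ and extract, from the twist triangle $\mathrm{RHom}(\mathcal{O}_X,-)\otimes\mathcal{O}_X\to(-)\to T_{\mathcal{O}_X}(-)$ combined with the shift induced by $-\otimes\mathcal{O}(-1)$, a linear recursion for the vector $(a_n^{(k)})_k$ governed by a nonnegative integer matrix $M$. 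One then shows its Perron--Frobenius eigenvalue $\mu$ satisfies $\mu>1$ precisely because, for even $d\ge4$, the primitive middle cohomology $H^d(X)_{\mathrm{prim}}$ is large enough that the off-diagonal entries of $M$ force net exponential growth, yielding $h_o(\Phi)\ge\log\mu>0$. As the closing remark indicates, the same conclusion follows conceptually by transporting $\Phi$ across the mirror equivalence $D^b\mathrm{Coh}(X)\simeq D^b\mathrm{Fuk}(\check X)$ to the action of a pseudo-Anosov symplectomorphism, for which \cite{DHKK} give $h_o=\log\lambda$ with stretch factor $\lambda>1$; since categorical entropy is invariant under exact equivalences, this again gives $h_o(\Phi)=\log\lambda>0$.

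The main obstacle is the positivity step. The cohomological half is essentially linear algebra plus the Hodge--Riemann relations, but $h_o(\Phi)>0$ requires genuinely derived-categorical input beyond the Euler form: one must control the growth of the total Ext-dimension in every homological degree, not merely its alternating sum. The delicate point is verifying that the Perron eigenvalue of the dimension recursion strictly exceeds the unit spectral radius on cohomology---that is, making the cancellation phenomenon quantitative and ruling out that the apparent degreewise growth collapses after passing to a split-generator. The HMS route avoids this combinatorics but shifts the difficulty to pinning down a precise mirror dictionary identifying $\Phi$ with a specific pseudo-Anosov symplectomorphism, and to justifying invariance of $h_o$ under the equivalence.
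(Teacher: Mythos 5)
The paper does not actually prove this proposition: it is imported wholesale from Fan \cite{F}, with the closing paragraph about $D^{b}\mathrm{Fuk}(X)$ serving only as heuristic motivation via mirror symmetry. So the comparison can only be between your reconstruction and Fan's published argument, whose overall architecture you do reproduce correctly: compute $\rho\left(\Phi_{H^{\bullet}}\right)=1$ on cohomology, then exhibit exponential growth of the total $\mathrm{Ext}$-dimension along $\Phi^{n}\mathcal{O}_{X}$ to force $h_{o}(\Phi)>0$.

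However, your justification of the cohomological half has a genuine gap. You claim $\Phi_{*}$ preserves the Hodge filtration $F^{\bullet}$ and therefore has the eigenvalues of a compact Hodge isometry group. But the spherical twist acts by $v\mapsto v-\langle v,v_{\mathcal{O}_{X}}\rangle\, v_{\mathcal{O}_{X}}$ with $v_{\mathcal{O}_{X}}=\sqrt{\mathrm{td}(X)}$ having a nonzero $H^{0,0}$-component, so it sends classes deep in the filtration to classes with components in $F^{0}\setminus F^{1}$; it does \emph{not} preserve $F^{\bullet}$. Once filtration-preservation fails, you cannot pass to $\mathrm{gr}_{F}$, and the remaining structure --- that $\Phi_{*}$ is an isometry of the Mukai pairing --- is not enough: on an even-dimensional Calabi--Yau that pairing is indefinite, and isometries of indefinite lattices routinely have spectral radius $>1$ (indeed the product of a unipotent and a reflection, which is exactly what $\Phi_{*}$ is, is the standard way to produce such hyperbolic isometries). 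Fan establishes $\rho\left(\Phi_{H^{\bullet}}\right)=1$ by direct computation of the induced matrix on the relevant sublattice, not by a compactness argument. Two smaller issues: $\mathcal{O}_{X}$ is not a split generator, so the DHKK limit formula does not apply to it verbatim (you only get a lower bound via the comparison $\delta_{t}(G,\Phi^{n}G)\gtrsim\delta_{t}(\mathcal{O}_{X},\Phi^{n}\mathcal{O}_{X})$, which fortunately is the direction you need); and the Perron--Frobenius eigenvalue exceeding $1$ is asserted rather than derived --- that inequality \emph{is} the theorem, and the recursion coming from the twist triangle must actually be solved to see that its growth rate is $>1$ precisely in the stated range of degree and dimension. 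The concluding HMS paragraph, both in your write-up and in the paper, is motivation rather than proof and cannot substitute for either half.
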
  

The discrepancy in between the two matches that between complex geometry and symplectic geometry. K3-surfaces provide a further example of a variety not satisfying \cite{KT}'s Conjecture. We are therefore led to the following

\begin{theorem}  The jump in the topological entropy of the pseudo-Anosov map generating MCG$_{_{g,n}}^{^{\Gamma}}$ is due to a discrepancy between intrinsic and extrinsic distance among elements of MCG$_{_{g,n}}^{^{\Gamma}}$ with respect to the ambient group MCG$_{_{g,n}}^{^{\text{p}}}$ (cf. Figure \ref{fig:concave1}).
\end{theorem}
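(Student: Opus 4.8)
The plan is to reduce the asserted ``jump'' to a comparison of two length functions evaluated on the same pseudo-Anosov mapping class $f$ generating $\text{MCG}_{_{g,n}}^{^{\Gamma}}$, one computed intrinsically in $\text{MCG}_{_{g,n}}^{^{\Gamma}}$ and one computed extrinsically in the ambient group $\text{MCG}_{_{g,n}}^{^{p}}$. First I would invoke the Nielson--Thurston classification together with the Gromov--Yomdin identity recalled above, specialised to surfaces: for a pseudo-Anosov $f$ the topological entropy equals $\log\lambda$, with $\lambda>1$ the stretch factor, so that $h_{_{\text{top}}}(f)=\log\rho(f)$ as a spectral quantity attached to the action on $H^{^{\bullet}}$. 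This makes the entropy detectable on the curve graph through the asymptotic translation length $\ell_{_{\mathcal{C}}}(f)$ and on the pants complex through $l_{_{\mathcal{P}}}(f)$.

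Next I would separate the two regimes dictated by Figure \ref{fig:concave1}. In $\text{MCG}_{_{g,n}}^{^{p}}$ every product of Dehn twists realising $f$ is admissible, so the extrinsic translation length computes the genuine dilation via Penner's construction. In $\text{MCG}_{_{g,n}}^{^{\Gamma}}$, by contrast, the multicurve $\mathcal{L}$ replacing the primitive insertions must be kept fixed, so the Dehn twists about the forbidden simple closed curves crossing $\mathcal{L}$ are excluded from the admissible family $\{\mathcal{P}\}_{_{\mathcal{L}}}$. Expressing $f$ in terms of the constrained generators therefore costs additional word length, quantified precisely by the intrinsic--versus--extrinsic distortion of the inclusion $\text{MCG}_{_{g,n}}^{^{\Gamma}}\ <\ \text{MCG}_{_{g,n}}^{^{p}}$, and I would identify the difference of $l_{_{\mathcal{P}}}(f)$ in the two metrics with the subgroup concavity depicted in Figure \ref{fig:concave1}.

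The final step packages this difference as the entropy jump. Since the word problem for $\text{MCG}_{_{g,n}}^{^{\Gamma}}$ is unsolvable with nonrecursive distortion (Theorem \ref{th:2}), the distortion function admits no recursive bound, so the constrained translation length is not recoverable from the ambient one by any uniform quasi-isometry. Combined with the fact that $\mathcal{L}$ is not a critical point of $f_{_{\text{sys}}}$, and that distinct laminations cannot be smoothly interpolated, this forces the dilation $\lambda$ measured intrinsically to differ from the one measured extrinsically; indeed, under non-separability the ambient image need no longer be pseudo-Anosov, and passing to logarithms converts the resulting gap into the asserted jump in $h_{_{\text{top}}}$.

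The hard part will be the passage from a coarse, nonrecursive distortion statement about word lengths to a genuine numerical discontinuity of the spectral quantity $\log\lambda$. Concretely, one must show that excluding the forbidden twists actually changes the minimal dilation achievable within the admissible class $\{\mathcal{P}\}_{_{\mathcal{L}}}$, rather than merely lengthening words for a fixed $\lambda$. This requires a lower bound on the stretch factor in terms of the intrinsic translation length on the pants complex, together with the observation that non-separability obstructs any continuous deformation of the two dilations into one another, so that their difference is a bona fide jump and not a continuous variation.
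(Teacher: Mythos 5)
Your route is genuinely different from the paper's. The paper's proof never mentions translation lengths, Penner's construction, or distortion: it attributes the jump to the act of reinstalling the primitive cohomologies after degenerating along the graph $\Gamma$, and then reduces everything to the separability dichotomy --- when the simple cohomologies are independent of the vertices of $\Gamma$ all choices of graph are equivalent, both sides of $\text{MCG}_{_{g,n}}\ <\ \text{MCG}_{_{g,n}}^{^{p}}$ are separable, and the degeneracies converge to $h_{_{\text{TOP}}}=0$; when they depend on the vertices, both sides of $\text{MCG}_{_{g,n}}^{^{\Gamma_{_i}}}\ <\ \text{MCG}_{_{g,n}}^{^{p}}$ are non-separable and the degeneration fails. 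Your mechanism (forbidden Dehn twists, the constrained family $\{\mathcal{P}\}_{_{\mathcal{L}}}$, and the nonrecursive distortion of Theorem \ref{th:2}) is closer to the literal wording of the statement about intrinsic versus extrinsic distance, but it is not the argument the paper gives.

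There is also a genuine gap in your route, and it is precisely the one you flag in your final paragraph, so it cannot be deferred. The dilation $\lambda$ of a pseudo-Anosov mapping class is a conjugacy invariant of the element itself; it does not depend on which generating set of which subgroup is used to express the element as a word. Hence distortion of the inclusion $\text{MCG}_{_{g,n}}^{^{\Gamma}}\ <\ \text{MCG}_{_{g,n}}^{^{p}}$ --- even nonrecursive distortion --- lengthens words but cannot by itself alter $\log\lambda\ =\ h_{_{\text{top}}}(f)$ for a fixed $f$. To obtain an actual jump in the spectral quantity you must either show that the constraint imposed by $\mathcal{L}$ forces a \emph{different} element to play the role of the generating pseudo-Anosov, with a strictly larger minimal dilation over the admissible class $\{\mathcal{P}\}_{_{\mathcal{L}}}$, or else redefine the entropy as a translation length in the intrinsic word metric, in which case it is no longer the topological entropy to which the Gromov--Yomdin identity applies. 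Neither step is supplied, and the needed lower bound on the constrained minimal dilation is exactly the missing lemma. The paper avoids this issue entirely by arguing at the level of the degeneration formula and separability rather than at the level of individual dilations.
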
 

\begin{proof} The jump in the topological entropy is related to reinstalling primitive cohomologies. The particular choice of graph, $\Gamma_{_i}$ dressing $\mathcal{M}_{_{g,n}}$ corresponds to removing the primitive cohomology insertions. As previously stated, the choice of $\Gamma$ corresponds to removing the primitive cohomological insertions, ensuring a degeneration formula can be found.   Different choices of graphs lead to different possible degeneracies, all converging towards achieving degeneracy in the topological entropy  
\begin{equation}  
h_{_{\text{TOP}}}\ =\ 0  \nonumber
\end{equation}   
Requiring the simple cohomologies to depend on the graph, specifically on the vertices, selects one particular descendant theory, making the theory more constrained.  

The impact of the two possible choices on MCG$_{_{g,n}}$ is the following:  

\begin{itemize}   
\item     
In case the simple cohomologies are independent w.r.t. the primitive insertions, both sides in the following relation
\begin{equation}   
\text{MCG}_{_{g,n}}\ <\ \text{MCG}_{_{g,n}}^{^p}  \nonumber
\end{equation}   
are separable. If the simple cohomologies are independent w.r.t. the vertices of $\Gamma$, all possible choices are equivalent, thereby leading to no new advancement from the point of view of the word problem. Thus, from the geometric group theory point of view, this setup leads to the same topological recursion relations.
\item   On the other hand, in absence of such dependence, both sides in
\begin{equation}   
\text{MCG}_{_{g,n}}^{^{\Gamma_{_i}}}\ <\ \text{MCG}_{_{g,n}}^{^p}  \nonumber
\end{equation}   
are non-separable. This case is much more interesting from the geometric group theory analysis, and it is reasonable to expect this to be playing a significant role in determining new topological recursion relations (cf. Section \ref{sec:2}).
\end{itemize}   
\end{proof}
\begin{theorem}  
Any complete intersection in projective space whose MCG$_{_{g,n}}^{^p}$ is separable, features a pseudo-Anosov map whose topological entropy is bounded from below by the logarithm of its spectral radius.
\end{theorem}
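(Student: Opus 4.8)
The plan is to deduce the statement from three ingredients already assembled in the excerpt: a pseudo-Anosov map whose existence is guaranteed by separability, the identification of its topological entropy with the logarithm of its dilatation, and the lower-bound (Yomdin) half of the Gromov--Yomdin theorem which controls the entropy from below by the spectral radius. The separability hypothesis enters only in the first ingredient; the remaining two are essentially formal once a genuine pseudo-Anosov map is in hand.

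First I would establish that separability of $\text{MCG}_{g,n}^{p}$ furnishes a pseudo-Anosov map directly, via Penner's construction. In the separable case the discussion preceding this theorem shows that all admissible pair-of-pants decompositions in the family $\{\mathcal{P}\}_{\mathcal{L}}$ are equivalent and that the generators of any constrained $\text{MCG}_{g,n}^{\Gamma}$ can be rewritten entirely in terms of those of the ambient $\text{MCG}_{g,n}^{p}$. In particular the obstruction coming from the lamination $\mathcal{L}$ replacing the primitive insertions --- which in the non-separable case lies in the kernel of the map to $\text{Aut}(\pi_3(X_d),\lambda)$ and whose Dehn twists therefore fail to generate --- is absent. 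One may then choose two multicurves $A=\{\alpha_1,\dots,\alpha_m\}$ and $B=\{\beta_1,\dots,\beta_n\}$ in minimal position with $A\cup B$ filling, and by Penner's theorem the word $f=\prod D_{\alpha_i}^{+} D_{\beta_j}^{-}$, with every twist occurring at least once, is pseudo-Anosov with a representative homeomorphism $\phi$, transverse measured foliations $(\mathcal{F}^{\Pi},\mathcal{F}^{\int})$, and dilatation $\lambda>1$. This is moreover consistent with the complementary stated fact that non-separability of $\text{MCG}_{g,n}^{p}$ prevents $h^{\bullet}$ from being pseudo-Anosov.

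Second, I would invoke the identification of the topological entropy of a pseudo-Anosov homeomorphism with the logarithm of its stretch factor, $h_{\text{TOP}}(\phi)=\log\lambda$. This follows from the definition of $h_{\text{TOP}}$ applied to $(\mathcal{F}^{\Pi},\mathcal{F}^{\int})$: the minimal covering count $N(\epsilon,n)$ grows like $\lambda^{n}$ because $\phi$ expands $\mathcal{F}^{\Pi}$ by $\lambda$ and contracts $\mathcal{F}^{\int}$ by $\lambda^{-1}$. Third, I would compare $\lambda$ with the spectral radius $\rho(\phi^{\bullet})$ of the induced action $\phi^{\bullet}$ on $H^{\bullet}(\mathcal{M}_{g,n};\mathbb{C})$. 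Since a complete intersection in projective space is smooth projective, hence compact Kähler, the inequality half of the Gromov--Yomdin theorem applies to a smooth model of $\phi$ and yields $h_{\text{TOP}}(\phi)\ge\log\rho(\phi^{\bullet})$; equivalently, the homological spectral radius never exceeds the dilatation, so $\log\lambda\ge\log\rho(\phi^{\bullet})$. Combining the three steps gives $h_{\text{TOP}}(\phi)=\log\lambda\ge\log\rho(\phi^{\bullet})$, which is the assertion.

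The main obstacle is the first step: rigorously converting the group-theoretic hypothesis of separability into the combinatorial freedom needed to exhibit a genuine filling pair $(A,B)$ unconstrained by $\mathcal{L}$. One must make precise the passage from ``all decompositions equivalent'' to ``Penner's hypotheses are met'', controlling how the distortion of the node subgroups interacts with the minimal position of $A\cup B$; this is exactly where the separable versus non-separable dichotomy does the real work. By contrast, the entropy equality and the spectral-radius comparison in the second and third steps are standard, so essentially all of the content resides in making the existence of the pseudo-Anosov map effective from separability.
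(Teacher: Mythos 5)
Your route differs substantially from the paper's. The paper's proof stays entirely inside the Randal--Williams formalism of Section 6: it takes $h^{\bullet}$ to act on $H^{\bullet}(X_d;\mathbb{C})$ and, at the homotopical level, on $\pi_3(X_d)$ with the skew form $\lambda$ inherited from $\mathbb{CP}^4$, and the only work done by the separability hypothesis is to make $\mathrm{Stab}\left(\ell^{\mathrm{hyp}}_{X_d}\right)$ well-defined as the upper bound of the monodromy map $\alpha$; the entropy inequality itself is never verified there but is implicitly inherited from the Gromov--Yomdin statement quoted just before the theorem. You instead produce the pseudo-Anosov map concretely via Penner's construction on the surface, identify its topological entropy with $\log\lambda$, and invoke the Yomdin half of Gromov--Yomdin. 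Your second and third steps are genuinely standard and correct for surface homeomorphisms (Yomdin's inequality requires only smoothness, not holomorphy), which is more than the paper's own proof establishes on that side.

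Two caveats. First, the gap you flag in your own first step is real and is not closed by the paper either: neither argument actually derives, from separability of $\mathrm{MCG}^{p}_{g,n}$, the existence of a filling pair $(A,B)$ in minimal position unconstrained by the lamination $\mathcal{L}$, so the existence of the pseudo-Anosov map rests on the same unproven passage in both treatments. Second, there is a mismatch in where the spectral radius is computed: the paper's proof takes $\rho$ for the induced action on $H^{\bullet}(X_d;\mathbb{C})$ of the complete intersection itself (a complex threefold), whereas you take it for the action on $H^{\bullet}(\mathcal{M}_{g,n};\mathbb{C})$ of the surface. For a surface pseudo-Anosov the chain $h_{\mathrm{TOP}}(\phi)=\log\lambda\ \ge\ \log\rho(\phi^{\bullet})$ is standard; for the threefold action the paper gives no argument, so if the intended reading is the latter, your proof does not address the stated quantity.
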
   

\begin{proof}  Making use of MCG separability.   
Consider $X_{_d}\subset\mathbb{CP}^{^4}$, with complex dimension $d=3$. The pseudo-Anosov map  
\begin{equation}  
h^{^{\bullet}}:\ H^{^{\bullet}}\left(X_{_d};\mathbb{C}\right)\ \longrightarrow\ H^{^{\bullet}}\left(X_{_d};\mathbb{C}\right)  \nonumber
\end{equation} 

At the homotopical level, it reads   

\begin{equation}  
h^{^{\bullet}}:\ \pi_{_3}\left(X_{_d}\right)\ \longrightarrow\ \pi_{_3}\left(X_{_d}\right)  \nonumber 
\end{equation}
but $\pi_{_3}$ also inherits a skew-symmetric form $\lambda$ from the ambient space. 
MCG$^{^p}_{_{g,n}}$ is separable if Mon$_{_d}$ can fully determine it. Specifically, the upper bound to the map 
\begin{equation}  
\alpha:\ \text{Mon}_{_d}\ \longrightarrow\ \text{Stab}\left(\ell_{_{X_{_d}}}^{^{\text{hyp}}}\right)\ \le\ \text{MCG}_{_d}^{^p} 
\end{equation}  
where $\text{Stab}\left(\ell_{_{X_{_d}}}^{^{\text{hyp}}}\right)$ is well-defined for the case in which MCG$_{_{g,n}}^{^p}$ is separable.
\end{proof}  

\begin{theorem}  
If MCG$_{_{g,n}}^{^p}$ is non-separable, $h^{^{\bullet}}$ is not pseudo-Anosov.
\end{theorem}  

\begin{proof}  Making use of MCG non-separability. 
Non-separability implies 
\begin{equation}   
\text{Stab}\left(\ell_{_{X_{_d}}}^{^{\text{hyp}}}\right)\ \longrightarrow\ \text{Aut}\left(\pi_{_3}\left(X_{_d}\right),\lambda,\mu\right)  
\end{equation}  
is not a surjection, the reason being that the skew-symmetric form $\lambda$ is now dependent on the quadratic refinement, $\mu$, induced by the immersion $X_{_d}\ \subset\ \mathbb{CP}^{^4}$.  
Specifically, 
\begin{equation}  
\alpha:\ \text{Mon}_{_d}\ \longrightarrow\  \text{MCG}_{_d}^{^p}  
\end{equation} 
no longer features $\text{Stab}\left(\ell_{_{X_{_d}}}^{^{\text{hyp}}}\right)$ as an upper bound, and therefore  
\begin{equation}  
\text{Stab}\left(\ell_{_{X_{_d}}}^{^{\text{hyp}}}\right)\ \not\le\ \text{MCG}_{_d}^{^p}  
\end{equation}
\end{proof}  

\subsection{Residual finiteness}  

\begin{definition} Residual finiteness is a topological property, stating that, given $G=\pi_{_1}(X)$ and $C$ a compact subspace of the universal cover $\tilde X$ of $X$, then there exists a finite cover $Y$ of $X$ such that $\tilde X\rightarrow\ Y$ is injective on $C$. 
\end{definition}    
Residual finiteness is a property of groups in which every nontrivial element can be distinguished from the identity by a finite quotient. A group is thereby residually-finite if for any non-identity element, there exists a finite group such that the image of the element is not the identity in that group.  
\begin{theorem}  For complete intersections in projective space, residual finiteness of MCG only holds if the primitive cohomologies are independent w.r.t. the symplectic form.
\end{theorem}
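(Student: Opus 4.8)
The plan is to prove the contrapositive: if the primitive cohomologies depend on the symplectic form, then $\text{MCG}$ fails to be residually finite. The natural bridge between residual finiteness and the node-subgroup data assembled in the previous subsections is the word problem, through the classical theorem of McKinsey and Mal'cev that a finitely presented, residually finite group has solvable word problem.

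First I would record that the relevant mapping class group is finitely presented. By Dehn's theorem quoted above, every mapping class of $\mathcal{M}_{g,n}$ is a product of Dehn twists about the simple closed curves of a lamination, and in the presence of primitive insertions the generators of $\text{MCG}_{g,n}^{\Gamma}$ are precisely those twists compatible with the fixed lamination $\mathcal{L}$; by the corollary identifying the constituents of $\{\mathcal{P}\}_{\mathcal{L}}$ with generating sets, this group is finitely generated, and one invokes the standard finite presentability of mapping class groups to upgrade this to finite presentation.

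Next I would feed in the equivalence established above, that dependence of the simple cohomologies on the nodes (equivalently, dependence of the symplectic form on the primitive insertions) is equivalent to the statement that determining $\text{MCG}_{g,n}$ for $\mathcal{M}_{g,n}^{\Gamma}$ is an unsolvable word problem, with nonrecursive distortion of the subgroups attached to the nodes of $\Gamma$ (Theorem \ref{th:2}). Thus, under the hypothesis that the cohomologies are not independent of the symplectic form, the word problem for the relevant $\text{MCG}$ is unsolvable. Combining the two ingredients through the contrapositive of McKinsey's theorem—a finitely presented group with unsolvable word problem cannot be residually finite—shows that dependence of the primitive cohomologies on the symplectic form forces failure of residual finiteness. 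Reading this contrapositively yields the statement: residual finiteness of $\text{MCG}$ holds only if the primitive cohomologies are independent of the symplectic form.

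The hard part will be the middle step, namely making sure that the unsolvability of the word problem furnished by Theorem \ref{th:2} is an intrinsic feature of the group whose residual finiteness is at stake, rather than an artifact of a particular choice of generators. Concretely, one must verify that the nonrecursive distortion of each node subgroup is genuinely visible in the word metric of the ambient $\text{MCG}_{g,n}^{p}$—this is where the intrinsic-versus-extrinsic distance comparison of the previous subsection does the work—and that this distortion is exactly the non-separability of Theorem \ref{th:1}. Since non-separability of the node subgroups obstructs separating even the trivial subgroup in the profinite topology, it is precisely this obstruction that defeats residual finiteness, closing the argument and matching the known failure of residual finiteness of $\text{MCG}$ recorded in \cite{RW}.
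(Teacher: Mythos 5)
Your route is genuinely different from both of the paper's arguments. The paper gives two proofs: the first deduces the statement directly from the relation between residual finiteness and separability together with Theorem \ref{th:1} (dependence on the nodes forces non-separability); the second argues by contradiction, showing that dependence of the primitive cohomologies on the symplectic form prevents $\text{Stab}\left(\ell_{_{X_{_d}}}^{^{\text{hyp}}}\right)$ from injecting into $\text{Aut}\left(\pi_{_3}\left(X_{_d}\right),\lambda,\mu\right)$, so that $\text{Im}(\alpha)$, and hence $\text{MCG}^{^p}_{_{g,n}}$, fails to be residually finite. You instead pass through the word problem and the Mal'cev--McKinsey theorem. That bridge is attractive because it replaces the somewhat informal ``relation between residual finiteness and separability'' with a precise classical implication.

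However, there is a genuine gap at exactly the point you flag as ``the hard part,'' and it is not merely a matter of checking generator-independence. The Mal'cev--McKinsey theorem concerns the \emph{word problem} of a finitely presented group, whereas Theorem \ref{th:2} as stated concerns the \emph{generalised} word problem: membership in the subgroups attached to the nodes of $\Gamma$, together with nonrecursive distortion of those subgroups. These are not interchangeable. A finitely presented, residually finite group can perfectly well contain finitely generated subgroups with unsolvable membership problem and nonrecursive distortion --- Mihailova's subgroups of $F_{_2}\times F_{_2}$ are the standard example --- so unsolvability of the generalised word problem does not contradict residual finiteness, and the contrapositive of Mal'cev--McKinsey does not fire. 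To close your argument you would need either to upgrade Theorem \ref{th:2} to unsolvability of the word problem of the ambient group itself, or to argue directly (as the paper does) that the node subgroups fail to be separable and that this particular failure propagates to the trivial subgroup, which is what residual finiteness actually requires. The finite presentability of the modified group $\text{MCG}_{_{g,n}}^{^{\Gamma}}$ is a second, smaller gap: the classical finite presentation of $\text{MCG}_{_{g,n}}$ does not automatically descend to the constrained subgroup singled out by the lamination $\mathcal{L}$.
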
  
\begin{proof}  Two possible proofs can be provided. 
The first readily follows from the relation between residual finiteness and separability (together with Theorem \ref{th:1}).  

Alternatively, one might argue by contradiction. Suppose the reverse is true. Assuming the primitive cohomologies depend on the symplectic form. Then, we know that Stab$\left(\ell_{_{X_{_d}}}^{^{\text{hyp}}}\right)$ does not inject in Aut$\left(\pi_{_3}\left(X_{_d}\right),\lambda,\mu\right)$, and 
\begin{equation}  
\text{Stab}\left(\ell_{_{X_{_d}}}^{^{\text{hyp}}}\right)\ \not\le\ \text{MCG}_{_{g,n}}^{^p}  \nonumber
\end{equation}
due to the underlying unstable homotopy theory. Im$(\alpha)$ is not residually-finite, thereby neither is MCG$^{^p}_{_{g,n}}$.
Residual finiteness is associated to the pseudo-Anosov feature of multi-curves, of which Dehn twists can be taken to define generators of MCG. If even one simple closed curve that is part of the aforementioned multicurve does not behave as pseudo-Anosov, MCG of the corresponding variety is not residually finite. The latter corresponds to having some closed curves as fixed points of automorphisms of the moduli space, and therefore corresponds to the case in which primitive cohomologies depend on the symplectic form.

\end{proof}

\subsection{The fundamental germ}     
The fundamental germ is a generalisation of $\pi_{_1}$, first defined in \cite{G} for laminations arising through group actions, and subsequently extended to any lamination having a dense leaf admitting a smooth structure. It is therefore the fundamental group for laminations.

The idea underlying its construction relies on the ansatz that some features of manifold theory can be extended to laminations by replacing a manifold object by a family of manifold objects existing on the leaves of a lamination, $\mathcal{L}$, and respecting the transverse topology, $T$. However, such correspondence when attempting to construct objects such as tensors, de Rham cohomology groups, falls short from being exactly one-to-one, due to the discreteness of the setup involving the lamination. 

For example, considering the case of an exceptionally well-behaved lamination, one could define an inverse limit  of manifolds by covering maps
\begin{equation}  
\hat{M}\ = \ \underset{\longleftarrow}{\lim}\ M_{_{\alpha}}    
\label{eq:mhat}   
\end{equation}  
Such a system induces a direct limit of de Rham cohomology groups, and the limit extends to cohomology groups $H^{^{\bullet}}(\hat{M};\mathbb{R})$ with dense image. However, from \eqref{eq:mhat}, the construction of other groups, such as $\pi_{_1}$, $H^{^{\bullet}}(\hat{M};\mathbb{Z})$, $H_{_{\bullet}}(\hat{M};\mathbb{R})$ does not follow through so straightfarwardly.     

For this reason, for certain classes of laminations, \cite{G} proposed a new construction of the fundamental group of a lamination, referred to as the fundamental germ, and conventionally denoted as follows  

\begin{equation}  
[|\pi|]_{_1}(\mathcal{L},x)  \nonumber
\end{equation}  

The intuition guiding such construction is that of considering laminations as an irrational manifold.

Consider a pointed manifold $(M,x)$. Its fundamental group can be viewed as the identifications to be made in the universal cover $(\tilde M, \tilde x)$ to achieve $(M,x)$ as a quotient. Points in an orbit of the fundamental group correspond to equivalence classes.   
Under this perspective, the generalisation of the fundamental group consists in replacing $\pi_{_1}$-orbits with mutual approximation of points by means of an auxiliary transverse space, $T$. In such case, the universal cover $(\tilde M,\tilde x)$ no longer produces a quotient manifold, but, rather a leaf, $(L,x)$, of a lamination, $\mathcal{L}$. The germ of the transversal space $T$ about $x$ may therefore be interpreted as the failed attempt of $(L,x)$ to form an identification topology at $x$. The fundamental germ $[|\pi|]_{_1}(\mathcal{L},x)$ is therefore a device that algebraically records the dynamica of $(L,x)$ as it approaches $x$ through the topology of $T$.

\begin{figure}[ht!]  
\begin{center}  
\includegraphics[scale=1]{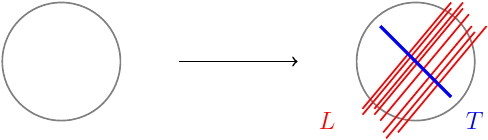}    
\caption{\small Reproduction of a Figure in \cite{G}, essentially showing how to turn an orbit in the fundamental group to a sequence of intersections in the transverse space. Identify orbits of $\pi_{_1}$ (on the left-hand-side), approximate them through a transversal, $T$ (on the RHS), and keep track of the sequences in $\pi_{_1}$ whose $x$-translates converge transversally to $x$.} 
\label{fig:orbit}    
\end{center}  
\end{figure}  

Making use of Figure \ref{fig:orbit}, we can state the following 
\begin{definition}  
The fundamental germ, $[|\pi|]_{_1}(\mathcal{L},x)$ is the set of tail equivalence classes of sequences of the form  
\begin{equation}  
\{g_{_{\alpha}}h_{_{\alpha}}^{^{-1}}\}  \nonumber
\end{equation}   
such that   
\begin{equation}  
g_{_{\alpha}}x, h_{_{\alpha}}x\ \longrightarrow\ x\nonumber
\end{equation}   
in $T$.   
\end{definition}  
By component-wise multiplication, the fundamental germ comes equipped with a grupoid structure, and   
\begin{equation}   
\pi_{_1}(L,x)\ <\ [|\pi|]_{_1}(\mathcal{L},x)   
\end{equation}
Of the two cases introduced in Section \ref{sec:2}, we recall their respective MCG structure, previously mentioned in Section \ref{sec:5}

\underline{Case 1}   

\begin{equation}  
\text{MCG}_{_{g,n}}\big\backslash \text{MCG}^{^{(1)}}_{_{g_{_1},n_{_1}}}\ =\ \text{MCG}^{^{(2)}}_{_{g_{_2},n_{_2}}}   \nonumber
\end{equation}

\underline{Case 2}   

\begin{equation}  
\text{MCG}_{_{g,n}}\big\backslash\left(\ \text{MCG}^{^{(1)}}_{_{g_{_1},n_{_1}}}\ \cup\ \text{MCG}^{^{(2)}}_{_{g_{_2},n_{_2}}}\ \right)\ = \ \bigsqcup_{_{i=1}}^{^{h-1}}\text{MCG}^{^{\Gamma_{_i}}}_{_{g,n}} \nonumber   
\end{equation}
From Section \ref{sec:5}, we have that
\begin{equation}  
\text{Ker}\left( \text{Stab}_{_{\text{MCG}_{_{g,n}}}}\left(\ell_{_{\mathcal{M}}}^{^{[F]}}\right)\ \longrightarrow\ \text{Aut}\left(\pi_{_2}\left([F]\right),\lambda\right)\right)\ \equiv\ \alpha\left([|\pi|]_{_1}(\mathcal{L}), [F]\right)     
\label{eq:Ker}   
\end{equation}   
with    
\begin{equation}  
\alpha\left([|\pi|]_{_1}(\mathcal{L}), [F]\right) \ <\   \text{Stab}\left(\ell_{_{\mathcal{M}}}^{^{[F]}}\right)
\end{equation}
\eqref{eq:Ker} implies $\text{Stab}_{_{\text{MCG}_{_{g,n}}}}\left(\ell_{_{\mathcal{M}}}^{^{[F]}}\right)\ \longrightarrow\ \text{Aut}\left(\pi_{_2}\left([F]\right),\lambda\right)$ is surjective. Hence, $\alpha\left([|\pi|]_{_1}(\mathcal{L}),[F]\right)$ correspond to the elements  that need to be removed from the stabiliser. This follows from the fact that they do not inject in $\text{Aut}\left(\pi_{_3}\left(X_{_d}\right),\lambda\right)$, meaning they are missing in the automorphisms
\begin{equation}  
H^{^{\bullet}}\left(X_{_d}\right)\ \longrightarrow\ H^{^{\bullet}}\left(X_{_d}\right)    
\end{equation}
thereby leading to the identification \eqref{eq:Ker}. 

$K_{_d}$ being the lower bound on the subgroups of MCG$_{_{g,n}}^{^{\text{pr}}}$ that can be achieved by means of the map $\alpha$, we are led to the following statement.

\begin{theorem}   For complete intersections in projective space,  
\begin{equation}   
\alpha\left([|\pi|]_{_1}(\mathcal{L}), [F]\right)\ \le\ \text{Im}(\alpha)\ \le\ \text{Stab}_{_{\text{MCG}_{_{g,n}}}}\left(\ell_{_{{\mathcal{M}}}}^{^{[F]}}\right)
\end{equation}     
with the equality being achieved when $\mathcal{L}$ corresponds to a pants decomposition of $\mathcal{M}_{_{g,n}}$. This essentially corresponds to a saturation limit. 
 
\end{theorem}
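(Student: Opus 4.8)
The plan is to treat the two inclusions separately and then to isolate the geometric condition under which the chain collapses to equalities. The upper inclusion $\text{Im}(\alpha)\ \le\ \text{Stab}_{_{\text{MCG}_{_{g,n}}}}\left(\ell_{_{\mathcal{M}}}^{^{[F]}}\right)$ is immediate from the construction of the monodromy homomorphism \eqref{eq:monbeg}, whose target is precisely the stabiliser: since $\ell_{_{\mathcal{M}}}^{^{[F]}}$ is $\text{Mon}$-invariant, every based loop in $\mathcal{M}_{_{g,n}}$ is carried to a mapping class fixing the tangential structure. This is the geometric-topology transcription of the fact recalled from \cite{RW} that the quadratic refinement $\mu_{_{\ell_{_{X_{_d}}}^{^{\text{hyp}}}}}$ is $\text{Mon}_{_d}$-invariant, so that $\text{Im}(\alpha)$ lands inside the stabiliser of the hyperbolic structure, itself the largest subgroup of $\text{MCG}_{_d}$ commuting with that structure.

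For the lower inclusion $\alpha\left([|\pi|]_{_1}(\mathcal{L}),[F]\right)\ \le\ \text{Im}(\alpha)$ I would start from \eqref{eq:Ker}, which identifies the germ image with $\text{Ker}\left(\text{Stab}_{_{\text{MCG}_{_{g,n}}}}\left(\ell_{_{\mathcal{M}}}^{^{[F]}}\right)\ \longrightarrow\ \text{Aut}\left(\pi_{_2}\left([F]\right),\lambda\right)\right)$. In the hypersurface model of Section \ref{sec:1} this kernel is the group $K_{_d}\ =\ \text{Stab}_{_{\text{MCG}_{_d}}}\cap\ \text{SMCG}_{_d}$ appearing in the half-exact sequence
\begin{equation}
1\ \longrightarrow\ K_{_d}\ \longrightarrow\ \text{Stab}_{_{\text{MCG}_{_d}}}\left(\ell_{_{X_{_d}}}^{^{\text{hyp}}}\right)\ \longrightarrow\ \text{Aut}\left(\pi_{_3}\left(X_{_d}\right),\lambda\right). \nonumber
\end{equation}
Since \cite{RW} exhibit $K_{_d}$, via $\text{Im}\left(\Phi:\ \Theta_{_7}\ \longrightarrow\ K_{_d}\right)$, as the lower bound on the monodromy image, this kernel is realised by $\alpha$, whence $\alpha\left([|\pi|]_{_1}(\mathcal{L}),[F]\right)\ \le\ \text{Im}(\alpha)$. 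The leaf-group inclusion $\pi_{_1}(L,x)\ <\ [|\pi|]_{_1}(\mathcal{L},x)$ moreover shows the germ image is non-trivial, recording at least the monodromy carried by the leaf through $[F]$.

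For the equality clause I would argue that a pants decomposition $\mathcal{L}$ of $\mathcal{M}_{_{g,n}}$ is a filling, maximal lamination, so the admissible family $\{\mathcal{P}\}_{_{\mathcal{L}}}$ is rigid and $[|\pi|]_{_1}(\mathcal{L})$ exhausts the transverse dynamics about $[F]$. This places us in Case 1, \ref{Word:case1}, where the monodromy completely determines the primitive cohomologies: the map $\text{Stab}_{_{\text{MCG}_{_{g,n}}}}\left(\ell_{_{\mathcal{M}}}^{^{[F]}}\right)\ \longrightarrow\ \text{Aut}\left(\pi_{_2}\left([F]\right),\lambda,\mu\right)$ becomes surjective and the half-exact sequence closes to the short-exact sequence recorded after \eqref{eq:monbeg}, so the lower and upper bounds coalesce onto $\text{Im}(\alpha)$ and all three groups coincide.

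The step I expect to be the main obstacle is neither inclusion but the saturation itself, and in particular the upper equality $\text{Im}(\alpha)\ =\ \text{Stab}_{_{\text{MCG}_{_{g,n}}}}\left(\ell_{_{\mathcal{M}}}^{^{[F]}}\right)$. Establishing it requires a realisation statement, that every element of the stabiliser of a pants-decomposition lamination arises from an honest monodromy loop, together with a proof that the tail-equivalence classes of sequences $\{g_{_{\alpha}}h_{_{\alpha}}^{^{-1}}\}$ converging transversally to $[F]$ saturate $K_{_d}$ rather than a proper subgroup. The pants-decomposition hypothesis is exactly what rigidifies $\{\mathcal{P}\}_{_{\mathcal{L}}}$ enough to force this compatibility between the grupoid structure of the germ and the $\Phi:\ \Theta_{_7}\ \longrightarrow\ K_{_d}$ parametrisation, which is why the equality is a saturation limit rather than a generic phenomenon.
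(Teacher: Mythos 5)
Your proposal is essentially correct in substance but follows a genuinely different route from the paper. You prove the sandwich directly as a group-theoretic statement: the upper inclusion from the $\text{Mon}$-invariance of the tangential structure (i.e.\ from the very construction of \eqref{eq:monbeg} and the $\text{Mon}_{_d}$-invariance of $\mu_{_{\ell}}$), the lower inclusion from the identification \eqref{eq:Ker} of the germ image with the kernel $K_{_d}$ together with the lower bound $\text{Im}\left(\Phi:\ \Theta_{_7}\ \longrightarrow\ K_{_d}\right)$ of \cite{RW}, and the equality clause by arguing that a pants decomposition forces Case 1 and closes the half-exact sequence. The paper instead argues at the level of \emph{dimensions}: it interprets the degenerate case through deformation retraction, identifies $\alpha\left([|\pi|]_{_1}(\mathcal{L}),[F]\right)\left(\mathcal{T}_{_{g,n}}\right)$ with the restricted Thurston spine $\mathcal{P}_{_{g,n}}|_{_{[F]}}$, and checks that $\text{VCohdim}$ of the germ image equals $\dim\mathcal{P}_{_{g,n}}|_{_{[F]}}$ in the saturation limit and is strictly smaller away from it. The paper's argument is thus a consistency check that feeds directly into Theorem \ref{Theorem:1.1}, whereas yours is the more direct group-theoretic proof of the stated inclusions. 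One caveat you should make explicit: \cite{RW} gives $\text{Im}\left(\Phi:\ \Theta_{_7}\ \longrightarrow\ K_{_d}\right)$, not $K_{_d}$ itself, as the lower bound on $\text{Im}(\alpha)$, so your lower inclusion $\alpha\left([|\pi|]_{_1}(\mathcal{L}),[F]\right)\ \le\ \text{Im}(\alpha)$ requires that this image exhaust $K_{_d}$ (or that the germ image coincide with $\text{Im}(\Phi)$ rather than all of $K_{_d}$); you flag this only for the equality clause, but it already affects the inclusion. The paper sidesteps this by working with $\text{VCohdim}$, which is insensitive to passing to a finite-index or cohomologically equivalent subgroup.
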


\begin{proof}    
Deformation retraction provides a consistency check in support of this statement. In the degenerate case, the the Thurston spine would read as follows  
\begin{equation}  
\alpha\left([|\pi|]_{_1}(\mathcal{L}), [F]\right)\left(\mathcal{T}_{_{g,n}}\right)\ \equiv\ \mathcal{P}_{_{g,n}}\Bigg|_{_{[F]}}     
\end{equation}  
with dimension    
\begin{equation}  
\begin{aligned}
\text{dim}\ \mathcal{P}_{_{g,n}}\Bigg|_{_{[F]}}   \ &=\ \text{VCohdim}\left( 
\alpha\left([|\pi|]_{_1}(\mathcal{L}), [F]\right)\right)\ \equiv\   \text{VCohdim}\left( K_{_d}\right)\\  
&=\ \text{VCohdim}\left( 
\text{Ker}\left( \text{Stab}_{_{\text{MCG}_{_{g,n}}}}\left(\ell_{_{\mathcal{M}}}^{^{[F]}}\right)\ \longrightarrow\ \text{Aut}\left(\pi_{_2}\left([F]\right),\lambda\right)\right)\right)
\end{aligned}    
\label{eq:newth}   
\end{equation}    

For a complete intersection (not embedded in an ambient complex projective space), \eqref{eq:newth} is trivially vanishing, since  
\begin{equation}   
\text{Ker}\left( \text{Stab}_{_{\text{MCG}_{_{g,n}}}}\left(\ell_{_{\mathcal{M}}}^{^{[F]}}\right)\ \longrightarrow\ \text{Aut}\left(\pi_{_2}\left(\mathcal{M}_{_{g,n}}\right),\lambda\right)\right) \ \equiv\{\empty\}  
\end{equation}    
which is exactly what we would expect.

On the other hand, away from the limiting case,    
\begin{equation}   
\text{VCohdim}\left( 
\alpha\left([|\pi|]_{_1}(\mathcal{L}),[F]\right)\right)\ <\ \text{VCohdim}\left(\text{Stab}_{_{\text{MCG}_{_{g,n}}}}\left(\ell_{_{\mathcal{M}}}^{^{[F]}}\right)\right)\ \equiv\ \text{VCohdim}\left(\text{MCG}_{_{g,n}}^{^{\text{pr}}}\right)\Bigg|_{_{[F]}}
\end{equation}   
implying    
\begin{equation}   
\text{VCohdim}\left( 
\alpha\left([|\pi|]_{_1}(\mathcal{L}),[F]\right)\right)\ <\ \text{dim}\ \mathcal{P}_{_{g,n}}\Bigg|_{_{[F]}}   
\end{equation} 

\end{proof}

\textbf{Question}: What is the meaning of the area-like overlap in between the different MCG subgroups in Case 2 (as shown in Figure \ref{fig:concave1})?   

We can interpret is as follows: the nontrivial area at the boundary of two corresponds to having 
\begin{equation}  
\text{Stab}_{_{\text{MCG}_{_{g,n}}}}\left(\ell_{_{\mathcal{M}}}^{^{[F]}}\right)\ \longrightarrow\ \text{Aut}\left(\pi_{_2}\left(\mathcal{M}_{_{g,n}}\right),\lambda\right)  
\end{equation}  
not injective, due to the fact that primitive cohomological insertions depend on the symplectic form.      

In general,   \cite{RW} and \cite{ABPZ} imply considering the representation

\begin{equation}  
\Psi:\ \pi_{_1}(\mathcal U, u)\ \longrightarrow\ \text{Aut}\left(\pi_{_3}\left(X_{_d}\right),\lambda\right)   
\label{eq:longarrow1}
\end{equation}

In Case 1, \cite{ABPZ},  

\begin{equation}   
H^{^{\bullet}}\left(X_{_d}\right)\ \equiv\ H^{^{\bullet}}\left(X_{_d}\right)_{_{\text{prim}}} 
\end{equation}  
implies \eqref{eq:longarrow1} reduces to   

\begin{equation}  
\Psi:\ \pi_{_1}(\mathcal U, u)\ \longrightarrow\ \text{Aut}\left(H^{^{\bullet}}\left(X_{_d}\right)\otimes\mathbb{C}\right)   
\label{eq:longarrow2}
\end{equation} 
Hence, if we were to consider a simple closed curve, $L$, the relation between the fundamental groups would read  

\begin{equation}  
\pi_{_1}(L, u)\ <\ \pi_{_1}(\mathcal{U},u)  
\label{eq:grouincl}   
\end{equation}  

However, replacing the left-hand-side of \eqref{eq:grouincl} with the fundamental germ\footnote{Recall that $\mathcal{L}$ here denotes the lamination replacing the primitive cohomology insertions.}  

\begin{equation}  
[|\pi|]_{_1}(\mathcal{L}, u)  
\end{equation}  
\eqref{eq:longarrow2} reads as follows  
\begin{equation}  
\Psi:\ [|\pi|]_{_1}(\mathcal{L})\ \longrightarrow\ \text{Aut}\left(H^{^{\bullet}}\left(X_{_d}\right)\otimes\mathbb{C}\right)   
\label{eq:longarrow11}
\end{equation}  
with the cohomologies $H^{^{\bullet}}\left(X_{_d}\right)$ now being not necessarily primitive, thereby corresponding to our Case 2 (Section \ref{sec:3}).   
We conclude with a few remarks and observations. The first of which is a fundamental Theorem in the context of the fundamental group.
\begin{theorem}\label{th:VK}[Van Kampen] A fundamental result in Algebraic Topology that describes how to compute the fundamental group of a space from the fundamental group of its subspaces.   
\end{theorem}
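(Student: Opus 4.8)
The plan is to prove the precise universal-property form recalled in Section~\ref{sec:1}: that $\pi_1(X,x_0)$ is the pushout (amalgamated free product) of $\pi_1(U,x_0)$ and $\pi_1(V,x_0)$ along $\pi_1(U\cap V,x_0)$, which I would do by constructing the comparison homomorphism $\alpha$ explicitly and then checking that it is well-defined and unique. First I would establish the generation statement, namely that the images of $j_1$ and $j_2$ together generate $\pi_1(X,x_0)$. Given a loop $\gamma\colon[0,1]\to X$, the preimages $\gamma^{-1}(U)$ and $\gamma^{-1}(V)$ form an open cover of the compact interval, so by the Lebesgue number lemma there is a partition $0=t_0<t_1<\cdots<t_m=1$ for which each restriction $\gamma|_{[t_{k-1},t_k]}$ lies entirely in $U$ or entirely in $V$. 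Using path-connectedness of $U\cap V$, I would join each intermediate point $\gamma(t_k)$ to $x_0$ by an auxiliary path contained in $U\cap V$, and thereby rewrite $[\gamma]$ as a product of classes each pulled back from $\pi_1(U,x_0)$ or $\pi_1(V,x_0)$.

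Next I would define $\alpha$ on such a product by applying $\alpha_1$ to the factors coming from $U$ and $\alpha_2$ to those coming from $V$. The compatibility hypothesis $\alpha_1\circ i_1=\alpha_2\circ i_2$ ensures that a factor represented by a loop lying in $U\cap V$ is assigned the same element of $H$ under either rule, so the value on a given factorization is unambiguous; uniqueness of $\alpha$ is then automatic, since the generation step forces any homomorphism satisfying $\alpha\circ j_1=\alpha_1$ and $\alpha\circ j_2=\alpha_2$ to take precisely these values on generators.

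The substantive step, which I expect to be the main obstacle, is showing that the value of $\alpha$ depends only on the homotopy class $[\gamma]$ and not on the chosen factorization. For this I would pass to a two-dimensional version of the subdivision argument: given a based homotopy $F\colon[0,1]\times[0,1]\to X$ between two loops, the cover $\{F^{-1}(U),F^{-1}(V)\}$ of the square again admits a Lebesgue number, so the square can be gridded into small cells each mapped into $U$ or into $V$. Reading off the word associated to each horizontal level of the grid, one verifies that passing from one level to the next changes the word only by elementary moves, and that each such move is either a relation already holding in $\pi_1(U,x_0)$ or $\pi_1(V,x_0)$ or an instance of the amalgamating identification carried by $\pi_1(U\cap V,x_0)$. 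The delicate point is the bookkeeping: one must choose the auxiliary basepoint paths in $U\cap V$ coherently across adjacent cells and check that every elementary transition is absorbed by the hypothesis $\alpha_1\circ i_1=\alpha_2\circ i_2$. Once this is in hand, applying $\alpha_1$ and $\alpha_2$ cell by cell shows that the two boundary words have equal image in $H$, which yields well-definedness and completes the verification of the universal property.
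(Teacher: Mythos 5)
Your proposal is the standard and correct proof of the Seifert--van Kampen theorem in its universal-property form: generation of $\pi_1(X,x_0)$ by the images of $j_1$ and $j_2$ via the Lebesgue-number subdivision of a loop, followed by the two-dimensional grid argument on a homotopy square to show that the induced map to $H$ is well defined, with uniqueness falling out of generation. This is essentially the argument in Munkres or Hatcher, and the delicate points you flag (coherent choice of auxiliary basepoint paths in $U\cap V$ across adjacent cells, and checking that each elementary move is absorbed either by a relation in $\pi_1(U,x_0)$ or $\pi_1(V,x_0)$ or by the compatibility hypothesis $\alpha_1\circ i_1=\alpha_2\circ i_2$) are exactly the right ones to worry about. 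You also correctly identified that the hypothesis of path-connectedness of $U\cap V$ is what makes the auxiliary-path construction possible.

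The comparison with the paper is therefore somewhat one-sided: the paper does not prove this statement at all. The labelled theorem you were given is only a one-sentence description of what van Kampen's theorem is about, and the precise formulation (the diagram with $i_1,i_2,j_1,j_2,\alpha_1,\alpha_2$ and the existence of a unique $\alpha$) appears earlier in Section~\ref{sec:1}, also without proof, as a classical result being recalled for later use. So you have supplied a genuine proof where the paper supplies a citation. The only caveat is that your argument, as written, establishes the universal property (existence and uniqueness of $\alpha$); this is equivalent to the pushout description of $\pi_1(X,x_0)$, which is the form the paper actually states, so your target is the right one.
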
  
Our analysis brings to light a case where the Van Kampen Theorem is not applicable in its original formulation, and we therefore believe it would be relevant to identify a possible generalisation of its original formulation.   

Given the correspondence between the injectivity radius and the critical exponent, 
\begin{equation}   
\delta(\Gamma)+1\ =\ \text{VCohdim}\left[\alpha\left(\left[|\pi|\right]_{_1}(\mathcal L)\right)
\right]\ =\ \text{dim}\ \mathcal{P}_{_{g,n}}  
\label{eq:critexpeq}
\end{equation}
the first equality in \eqref{eq:critexpeq} actually corresponds to the saturation limit 
\begin{equation}   
\underset{\mathcal{T}(S,p)}{\text{sup}}\ f_{_{\text{sys}}}\ =\ 2\ \underset{\mathcal{T}(S)}{\text{sup}} \ \text{inj}   
\end{equation}
In terms of the generalised systole function, jointly with our analysis, we can write the following generalisation of \eqref{eq:critexpeq}
\begin{equation}   
\delta\left(\text{Stab}_{_{\text{MCG}_{_d}}}\left(\ell_{_{\mathcal{M}}}^{^{[F]}}\right)\right)+1\ \ge\ \text{VCohdim}\left[\alpha\left(\left[|\pi|\right]_{_1}(\mathcal L, [F])\right)
\right]      
\label{eq:critexpeq1}
\end{equation}   
since   
\begin{equation}
    \alpha\left(\left[|\pi|\right]_{_1}(\mathcal L, [F])\right)\ \le\ \text{Stab}_{_{\text{MCG}_{_{g,n}}}}\left(\ell_{_{\mathcal{M}}}^{^{[F]}}\right)
\end{equation}
where equality is correctly recovered in the case of complete intersections in absence of a projective ambient space.

\section*{\textbf{Acknowledgements}}

The author wishes to thank Xiaolong Hans Han for discussions and explanations on hyperbolic geometry and geometric topology. A special acknowledgement also goes to Yi Huang for hospitality while visiting the Geometry and Topology Group at YMSC in Tsinghua, where part of this research was conducted in June 2025. She also acknowledges Ingrid Irmer for sharing Thurston's original unpublished work on spines, \cite{Th}, as well as Qile Chen, Si Li, and Zhengyu Zong for insightful discussions with regard to algebro geometric aspects of this work. Research conducted by the author is funded by SIMIS.

\appendix

\section{\textbf{Virtual cohomological dimension}}

The \emph{virtual cohomological dimension} of a group, $G$, is the largest integer, $n$, for which there exists a $G$-module, $M$, with

\begin{equation}  
H^{^n}(G,M)\ \neq\ 0 \nonumber
\end{equation}     

If   Cohdim$(G)\neq 2$, there exists a cell complex of dimension equal to Cohdim$(G)$. In presence of torsion, the cohomological dimension could diverge. However, one could define an alternative meaningful invariant when $G$ is torsion-free, namely the \emph{virtual} cohomological dimension, VCohdim$(G)$, defined as the cohomological dimension of a torsion-free subgroup, $\hat G$, of finite index in $G$. A Theorem by Serre states that this number is independent w.r.t. $\hat G$. 

One class of groups for which VCohdim is known is the class of virtual duality groups. A group $G$ is referred to as a duality group of dimension $d$ if there exists a $ZG$-module, $I$, and a class, $e$, in $H_{_d}(G;I)$, such that, if $M$ is any other $ZG$-module, the cap product with $e$ induces an isomorphism  
\begin{equation}  
H^{^k}(G;M)\ \xrightarrow{\sim}\ H_{_{d-k}}(G;M\otimes I)  \nonumber
\end{equation}  
with Cohdim$(G)=d$. Arithmetic groups are examples of virtual duality groups.

With ${\cal T}_{_{g,n}}$ being homeomorphic to a $6g-6+2n$-dimensional Euclidean space, and MCG$_{_{g,n}}$ virtually torsion-free, implies the existence of the following isomorphism, \cite{H}, 
\begin{equation}  
H_{_k}\left(\text{MCG}_{_{g,n}};\mathbb{Q}\right)\ \simeq\ H_{_k}\left({\cal M}_{_{g,n}};\mathbb{Q}\right)  \ \  \ \ \ \ \forall\  k.\nonumber
\end{equation}   

The only value of $k$ for which these groups are explicitly known are $k=1,2$.

\section{\textbf{Topological recursion}}\label{sec:B}

The volume of the moduli space    
\begin{equation}  
\mathcal{M}_{_{g,n}}\ \overset{def.}{=}\ \mathcal{T}_{_{g,n}}/\text{MCG}. \nonumber 
\end{equation}
is a polynomial with positive coefficients in surface boundary lengths of total degree $6g-6+2n$.  

\underline{Theorem}\cite{Mir}  [The WP volume polynomials]  
The volume polynomials are determined recursively from the volume polynomials of smaller total degree. The volume of the moduli space $V_{_{g,n}}\left(L_{_1},...,L_{_n}\right)$ of hyperbolic surfaces of genus $g$ and $n$ boundary components, with respective length  

\begin{equation}  
L\ =\ \left(L_{_1},...,L_{_n}\right) \nonumber 
\end{equation}   
is a polynomial   

\begin{equation}  
V_{_{g,n}}(L)\ =\ \underset{|\alpha|\le 3g-3+n}{\sum_{_{\alpha}}}C_{_{\alpha}}L^{^{2\alpha}}. \nonumber  
\end{equation}
where the coefficients read as follows  

\begin{equation}  
C_{_{\alpha}}\ =\ \frac{2^{^{\delta_{_{1g}}\delta_{_{1n}}}}}{2^{^{|\alpha|}}\alpha!(3g-3+n-|\alpha|)!}\int_{_{\overline{\mathcal{M}}_{_{g,n}}}}\psi_{_1}^{^{\alpha_{_1}}}...\psi_{_n}^{^{\alpha_{_n}}}\omega^{^{3g-3+n+|\alpha|}}. 
\end{equation}
These are the GW-invariants.

\begin{equation}     
\frac{\partial}{\partial L^{^{(1)}}}L^{^{(1)}}V_{_{g,n}}^{^l}\ =\ {\cal A}_{_{g,n}}^{^{\text{conn}}}+{\cal A}_{_{g,n}}^{^{\text{dconn}}}+{\cal B}_{_{g,n}}     
\label{eq:McM}   
\end{equation} 
\newpage

\end{document}